\tikzset{vertex/.style={circle,fill=black,inner sep=1pt,outer sep=2pt},
         mvertex/.style={rectangle,draw=black,thick,inner sep=2pt,outer sep=2pt},
         tvertex/.style={inner sep=1pt,font=\scriptsize},
         unvertex/.style={circle,fill=white,draw=white,inner sep=1pt},
         fill1/.style={fill=black!20,draw=black!20},
         fill2/.style={fill=black!40,draw=black!40},
         fill12/.style={fill=black!60,draw=black!60},
         >=stealth',
         leadsto/.style={-angle 90,decorate,decoration=snake,very thick},
         cut/.style={decorate,decoration=saw,very thick}}
\newtheorem{theorem}{Theorem}[section]
\newtheorem{corollary}[theorem]{Corollary}
\newtheorem{lemma}[theorem]{Lemma}
\newtheorem{proposition}[theorem]{Proposition}
\theoremstyle{definition}
\newtheorem{definition}[theorem]{Definition}
\newtheorem{construction}[theorem]{Construction}
\theoremstyle{definition}
\newtheorem{example}[theorem]{Example}
\theoremstyle{remark}
\newtheorem*{remark}{Remark}
\newtheorem*{remarks}{Remarks}
\theoremstyle{definition}
\newtheorem{step}{Step}
\DeclareMathOperator{\Ann}{Ann}
\DeclareMathOperator{\add}{add}
\DeclareMathOperator{\End}{End}
\DeclareMathOperator{\Ext}{Ext}
\DeclareMathOperator{\gldim}{gl.dim}
\DeclareMathOperator{\Hom}{Hom}
\DeclareMathOperator{\id}{id} 
\DeclareMathOperator{\ind}{ind}
\DeclareMathOperator{\rad}{rad}
\DeclareMathOperator{\sspan}{span}
\renewcommand{\mod}{\operatorname{mod}\nolimits}
\newcommand{\op}{{\operatorname{op}\nolimits}}
\newcommand{\orr}{\overset{\tau}{\leadsto}}
\newcommand{\ors}{\overset{\tau}{\to}}
\newcommand{\eqr}{\overset{\tau}{=}}
\newcommand{\cl}[2][]{[#2]_{\eqr_{#1}}}
\newcommand{\replacevertex}[3][fill=white,draw=white]
 {
  \node at #2 [#1,circle,inner sep=1pt] {};
  \node #2 at #2 #3;
 }
\newcommand{\etalchar}[1]{$^{#1}$}
\newcommand{\wB}{ \widetilde{B}}
\newcommand{\wC}{ \widetilde{C}}
\newcommand{\from}{\colon \!}
\newcommand{\wT}{ \widetilde{T}} 
\newcommand{\cA}{ \ensuremath{ {\mathcal A} } }
\newcommand{\cC}{ \ensuremath{ {\mathcal C} } }
\newcommand{\cD}{ \ensuremath{ {\mathcal D} } }
\numberwithin{figure}{section}
\title[Tilted algebras from cluster-tilted algebras]{Constructing tilted algebras from cluster-tilted algebras}
\date{\today}
\author[Bertani-{\O}kland]{Marco Angel Bertani-{\O}kland}
\author[Oppermann]{Steffen Oppermann}
\author[Wr{\aa}lsen]{Anette Wr{\aa}lsen}
\address{Institutt for matematiske fag\\ NTNU\\ 7491 Trondheim\\ Norway}
\email{Marco.Tepetla@math.ntnu.no}
\email{Steffen.Oppermann@math.ntnu.no}
\email{Anette.Wralsen@math.ntnu.no}
\begin{document}

\begin{abstract}
Any cluster-tilted algebra is the relation extension of a tilted algebra. We present a method to, given the distribution of a cluster-tilting object in the Auslander-Reiten quiver of the cluster category, construct all tilted algebras whose relation extension is the endomorphism ring of this cluster-tilting object.
\end{abstract}

\maketitle

\section{Introduction}

The cluster categories of finite dimensional hereditary algebras $H$ were introduced in \cite{BMRRT} in order to give a categorical model to better understand the cluster algebras of Fomin and Zelevinsky \cite{FZ}. The theory of cluster-tilted algebras was initiated in \cite{BMR1}, and the first link from cluster algebras to tilting theory was given in \cite{MRZ}.

There is a close connection between tilted algebras and cluster-tilted algebras (see Section~\ref{section_preliminaries} for definitions and notation). One such connection is the following: From the quiver of a tilted algebra one can obtain the quiver of a cluster-tilted algebra by adding arrows where there are minimal relations (this was proved for some cases in \cite{BR} and \cite{BRS}, and in full generality in \cite{ABS1}). In this paper we explore the opposite problem, i.e.\ to remove arrows from the quiver of a cluster-tilted algebra in such a way that the resulting quiver is the quiver of a tilted algebra.

More precisely, by \cite[1.1]{ABS1} any cluster-tilted algebra is the relation extension of some tilted algebra. Given a cluster-tilted algebra we wish to find all tilted algebras which have the given cluster-tilted algebra as relation extension. We will call these tilted algebras \emph{maximal tilted subalgebras} of the cluster-tilted algebra. For an arbitrary cluster-tilted algebra, given the distribution of a corresponding cluster-tilting object in the Auslander-Reiten quiver of the cluster category, we present an algorithm to construct all maximal tilted subalgebras.

Note that, by \cite{BMR2} and \cite{otherCCS}, all quivers of cluster-tilted algebras are constructed by quiver mutation from acyclic quivers. In the case of a cluster-tilted algebra of finite type, in \cite{BOW} the authors show explicitly how to determine the distribution of the corresponding cluster-tilting object in the cluster category. So in this way we can construct the input of our algorithm.

Our construction consists of the following two main steps:

 First we use local slices to lift the cluster-tilting object to a tilting complex in the derived category. The theory of local slices was introduced in \cite{ABS2} as a way to decide whether two tilted algebras have the same relation-extension algebra. The maximal tilted subalgebras are precisely the endomorphism rings of these tilting complexes. We show that there are certain equivalence classes of local slices which produce the same maximal tilted subalgebras. Moreover we can move from one equivalence class to another (transitively) by ``jumping trenches'' (see Construction~\ref{construction_jumping}).

 Second we use generalized $2$-APR tilts to keep track of the maximal tilted algebras coming up for the various equivalence classes of local slices. The procedure of $n$-APR tilting was introduced in \cite{IO} as a generalization of APR tilting (see \cite{APR}) in order to generate module categories that have a cluster-tilting  object. For $n=2$ the effect of this operation on the quivers and relations of the algebras is completely understood. Here we generalize $2$-APR tilting to complexes in the derived category, and show that jumping trenches is a special case of this generalization. Hence we obtain control over the quivers and relations of the algebras produced in this way.

The paper is organized as follows:

In Section~\ref{section_preliminaries} we will recall some basic results on mutation of quivers, cluster categories, cluster-tilted algebras of finite type and their relations. 

In Sections~\ref{section.Loc_slices} and \ref{section.2apr} we develop the theory for the two steps described above.

In Section~\ref{section_algorithm} we sum up the algorithm to find all the maximal tilted subalgebras of a given cluster-tilted algebra and illustrate it with an example.

Finally, in Section~\ref{section_infinite} we sketch how to apply the algorithm for cluster-tilted algebras of infinite type.

After completing this work we have been informed that similar results have been obtained independently by Bordino, Fern\'{a}ndez, and Trepode (\cite{BFT}).

\section{Background} \label{section_preliminaries}

\subsection{Quiver mutation} Let $Q$ be a finite quiver with no loops or 2-cycles and $k$ a vertex. To mutate at the vertex $k$ and obtain the quiver $\mu_k(Q)$ we do the following. 
\begin{itemize}
 \item Suppose there are $r>0$ arrows $i \to k $, $s>0$ arrows $k \to j$ and $t$ arrows $j \to i$ in $Q$, where a negative number of arrows means arrows in the opposite direction. Then there are $r$ arrows $k \to i$, $s$ arrows $j \to k$ and $t - rs$ arrows $j \to i$ in $\mu_k(Q)$.  
\item All other arrows are kept the same.
\end{itemize}

We say that $Q$ and $\mu_k(Q)$ are \emph{mutation equivalent}. Observe that $\mu^2_k(Q) = Q$. The collection of all quivers that are mutation equivalent to $Q$ is called the mutation class of $Q$. It can be easily seen that this definition is a special case  of matrix mutation, as it appears in the definition of cluster algebras (\cite{FZ}). 

 \subsection{Cluster categories and cluster-tilted algebras} 
 Let $\mathbb{K}$ be an algebraically closed field and $H$ a connected hereditary finite dimensional $\mathbb{K}$-algebra (which we will only call hereditary algebra for the rest of the paper). Any such algebra $H$ is Morita equivalent to a path algebra $\mathbb{K}Q$ for some finite quiver $Q$. An $H$-module $T$ is called a {\em tilting module} if satisfies the following two requirements: $\Ext^1_H(T,T)=0$ and the number of non-isomorphic indecomposable direct summands of $T$ is equal to the number of non-isomorphic simple modules in $\mod H$. The endomorphism algebra $\End_H(T)^\op$ is called a {\em tilted algebra} (see \cite{HR} for further details).

 Let $\cD=D^b(\mod H)$ be the bounded derived category. It comes equipped with two automorphisms, the shift functor $[1]\from \cD \to \cD$ and the Nakayama functor $\nu= - \otimes^L_H DH$ where $D$ denotes the duality on $\mod H$ with respect to the base field $\mathbb{K}$ (see \cite{Happel}). Then one defines the Auslander-Reiten translation $\tau = \nu [-1] \from \cD\to \cD$. Consider the automorphism $F=\tau^{-1}[1]$ of $\cD$ and define the cluster category $\cC=\cC_H$ as the orbit category $\cD/F$. The objects of $\cC$ are the objects of $\cD$, while $\Hom_\cC(A,B)=\oplus_i \Hom_\cD(A,F^i B)$ (see \cite{BMRRT} for more details).
 
 An object $T$ of $\cC$ is called a \emph{(cluster-) tilting} object if $\Ext^1_\cC(T,T)=0$ and $T$ is maximal with respect to this property, i.e.\ if $\Ext^1_\cC(T\oplus X,T\oplus X)=0$, then $X$ is a direct summand of a direct sum of copies of $T$. The endomorphism algebra $\End_\cC(T)^{\op}$ of a tilting object $T$ is called a \emph{cluster-tilted} algebra.
 
 Let $B=\End_\cC(T)^{\op}$ be a cluster-tilted algebra with $\cC=\cC_H$ the cluster category of some hereditary algebra $H$, and $T$ a tilting object in $\cC$.
  We then have  that $B$ is of finite representation type if and only if $H$ is of finite representation type \cite{BMR1}. In this case $H$ is the path algebra of a Dynkin quiver $Q$, and the underlying graph $\Delta$ of $Q$ is one of $\{A_n,D_m,E_6,E_7,E_8\}$ for $n\ge1$ and $m \ge 4$.  We say that $B$ is cluster-tilted of type $\Delta$. 

 We now present a useful theorem from \cite{BMR1}. 
\begin{theorem}[{\cite[2.2]{BMR1}}] \label{theorem.modct}
  Let $T$ be a tilting object in $\cC$. The functor $\Hom_\cC(T,-) \from \cC \to \mod \End_\cC(T)^{\op}$ induces an equivalence $ \cC/\add(\tau T) \simeq \mod \End_\cC(T)^{\op}$. This functor commutes with the AR-translate in both categories and sends AR-triangles to AR-sequences.
\end{theorem}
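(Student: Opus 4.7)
My plan is to show, in order, that $\Hom_\cC(T,-)$ factors through the quotient $\cC/\add(\tau T)$, that the induced functor is essentially surjective and fully faithful, and then to deduce the compatibility with the AR-structure.

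To identify the kernel on objects with $\add(\tau T)$, I would use that $\cC$ is $2$-Calabi--Yau and that $\tau X \cong X[1]$ in $\cC$ (an immediate consequence of $F = \tau^{-1}[1]$ acting as the identity on $\cC = \cD/F$). For any $T' \in \add T$ this already gives $\Hom_\cC(T, \tau T') \cong \Ext^1_\cC(T, T') = 0$. Conversely, writing an arbitrary indecomposable $X$ as $X = \tau Z$ and invoking the $2$-Calabi--Yau duality, $\Hom_\cC(T, \tau Z) \cong D\Ext^1_\cC(Z, T)$; the cluster-tilting maximality of $T$ forces $Z \in \add T$ whenever this vanishes, so $X \in \add(\tau T)$.

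Density and fullness both proceed via approximation triangles. For density, given a finitely presented module $M$ over $B = \End_\cC(T)^\op$, take a projective presentation $\Hom_\cC(T, T_1) \to \Hom_\cC(T, T_0) \to M \to 0$ with $T_i \in \add T$, lift to $f \from T_1 \to T_0$ in $\cC$, and complete to a triangle $T_1 \to T_0 \to X \to T_1[1]$; then $\Ext^1_\cC(T, T_1) = 0$ and the associated long exact sequence yield $\Hom_\cC(T, X) \cong M$. For fullness, lift a given $B$-linear map between images to a morphism of such approximation triangles and extract a morphism $X \to Y$ from the triangulated axioms; any ambiguity in the lift lies in $\add(\tau T)$ and disappears on the quotient, simultaneously giving faithfulness.

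Once the quotient functor has been established as an equivalence, compatibility with $\tau$ is automatic because $\tau$ preserves $\add(\tau T)$ and hence descends, and the image of an AR-triangle is an AR-sequence by the standard principle that an equivalence onto a module category carries almost-split triangles to almost-split sequences away from the kernel. The main obstacle I expect is the identification of the kernel on morphisms: verifying that a morphism with vanishing image under $\Hom_\cC(T,-)$ really factors through an object of $\add(\tau T)$ requires the $2$-Calabi--Yau duality and the cluster-tilting maximality of $T$ to enter in tandem, and this is the step most prone to subtle errors in a naive diagram chase.
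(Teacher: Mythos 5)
The paper offers no proof of this statement: it is imported verbatim from \cite[2.2]{BMR1} and used as a black box, so there is no internal argument to compare yours against. Judged on its own, your sketch reproduces the standard Buan--Marsh--Reiten argument correctly in outline: $\tau\cong[1]$ on $\cC$ (since $F=\tau^{-1}[1]$ is trivialized in the orbit category) gives $\Hom_\cC(T,\tau T')=\Ext^1_\cC(T,T')=0$; density via the triangle $T_1\to T_0\to X\to T_1[1]$ built from a projective presentation is exactly right; and the kernel on morphisms is correctly located, since a map $g$ with $\Hom_\cC(T,g)=0$ kills the right $\add T$-approximation $T_0\to X$ and therefore factors through its cone $T_1[1]\in\add(\tau T)$. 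Two points deserve more care than you give them. First, in identifying the objects annihilated by $\Hom_\cC(T,-)$ you invoke ``cluster-tilting maximality'' knowing only $\Ext^1_\cC(T,Z)=0=\Ext^1_\cC(Z,T)$; the maximality condition as stated in the paper also requires $\Ext^1_\cC(Z,Z)=0$ before it lets you conclude $Z\in\add T$, so you really need the (true, but separately proved) fact that for a tilting object in a cluster category the vanishing of $\Ext^1_\cC(T,-)$ alone characterizes $\add T$. Second, the last paragraph is too quick: that the functor commutes with $\tau$ and sends AR-triangles to AR-sequences is not ``automatic'' from having an additive equivalence, because the quotient $\cC/\add(\tau T)$ carries no exact structure a priori. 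One must check that applying $\Hom_\cC(T,-)$ to an AR-triangle $\tau X\to E\to X\to\tau X[1]$ with $X,\tau X\notin\add(\tau T)$ produces a \emph{short exact} sequence, i.e.\ that the relevant connecting maps vanish; this again uses the almost-split property (every non-retraction from $\add T$, respectively $\add\tau T$, into $X$ factors through $E$), after which almost-splitness of the image and the identification of the AR-translates follow. These are genuine verifications, not formal consequences, but neither invalidates your overall strategy.
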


\subsection{Cluster-tilted algebras and trivial extensions}  
Let $C$ be a finite dimensional algebra of global dimension at most two and consider the $C-C$-bimodule $\Ext^2_C(DC,C)$. We call the trivial extension $C \ltimes \Ext^2_C(DC,C)$ the \emph{relation-extension} of $C$. This definition plays a very important role in the theory of cluster-tilted algebras, as the following theorem shows.

 \begin{theorem}[{\cite[3.4]{ABS1}}] \label{thm.cta_vs_rel_extension}
 An algebra $B$ is cluster-tilted if and only if there exists a tilted algebra $C$ such that $B$ is the relation-extension of $C$.
 \end{theorem}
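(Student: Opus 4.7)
My plan is to reduce both directions of the equivalence to a single computation: for any tilting module $\tilde T$ over a hereditary algebra $H$, if we set $C = \End_H(\tilde T)^\op$ and let $T$ denote the image of $\tilde T$ under the projection $\cD \to \cC = \cC_H$, then there is an algebra isomorphism
\[
\End_\cC(T)^\op \;\cong\; C \ltimes \Ext^2_C(DC,C).
\]
The orbit category description gives $\End_\cC(T) = \bigoplus_{i\in\Z}\Hom_\cD(\tilde T, F^i\tilde T)$. Since $H$ is hereditary, $\Hom_\cD(X,Y[n])=0$ for $|n|\geq 2$, and since $\tilde T$ is tilting, $\Ext^1_H(\tilde T,\tilde T)=0$. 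Together with an analogous vanishing for negative $i$, only $i=0$ and $i=1$ contribute. The $i=0$ component is $\End_H(\tilde T)=C^\op$.

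For the $i=1$ component, I use $F=\nu^{-1}[2]$ together with the tilting derived equivalence $\cD^b(\mod H)\simeq \cD^b(\mod C)$ that sends $\tilde T$ to $C$. Since derived equivalences preserve Serre functors, this maps $F\tilde T$ to $\nu_C^{-1}C[2]$, and adjunction yields
\[
\Hom_\cD(\tilde T, F\tilde T) \;\cong\; \Hom_{\cD^b(\mod C)}(C,\nu_C^{-1}C[2]) \;\cong\; \Hom_{\cD^b(\mod C)}(DC,C[2]) \;=\; \Ext^2_C(DC,C).
\]
The multiplication inherited from the orbit category automatically gives a trivial extension structure because the product of two elements of degree $i=1$ lies in $\Hom_\cD(\tilde T,F^2\tilde T)=0$. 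Once this key isomorphism is established, both directions of Theorem~\ref{thm.cta_vs_rel_extension} follow: for the ``if'' direction, any tilted $C$ is of the form $\End_H(\tilde T)^\op$ for a tilting module $\tilde T$ over some hereditary $H$, and its image $T$ in $\cC_H$ is a tilting object by the construction of $\cC_H$; for the ``only if'' direction, any cluster-tilting object $T$ in $\cC_H$ can be represented by choosing indecomposable lifts in a fundamental domain of $F$, yielding a tilting module $\tilde T$ over a hereditary algebra derived equivalent to $H$ (possibly after a sequence of mutations).

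The main obstacle I anticipate is not the underlying vector-space identification in Step~2 but the verification that the $C$-$C$-bimodule structure on $\Hom_\cD(\tilde T,F\tilde T)$ coming from the orbit-category composition matches the canonical bimodule structure on $\Ext^2_C(DC,C)$ through the chain of adjunctions and the derived equivalence. A secondary subtlety is the lifting step in the ``only if'' direction: one must guarantee that every cluster-tilting object arises as the image of a tilting module (rather than only a tilting complex) over \emph{some} hereditary algebra in the derived equivalence class, which is precisely what ensures that $C=\End_H(\tilde T)^\op$ is tilted rather than merely of global dimension at most two.
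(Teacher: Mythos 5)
Your proposal follows essentially the same route the paper sketches (the paper cites the result to \cite{ABS1} and then outlines exactly this argument): realize $B=\End_\cC(T)^{\op}$ with $T$ induced from a tilting module $T'$ over a hereditary algebra via \cite[3.3]{BMRRT}, obtain $B\simeq \End_H(T')^{\op}\ltimes\Hom_\cD(T',FT')$ as in \cite{Z}, and identify $\Hom_\cD(T',FT')\simeq\Ext^2_C(DC,C)$ through the derived equivalence. Your computation of which orbit-category degrees contribute and your identification of the degree-one piece are correct, and the two subtleties you flag (the bimodule structure and the lifting of a cluster-tilting object to a tilting module over a possibly different hereditary algebra) are precisely the points the paper delegates to the cited references.
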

 Let $B$ be a cluster-tilted algebra.  From \cite[3.3]{BMRRT} we know that there exists a hereditary algebra $H$ and a tilting $H$-module $T'$ such that $B=\End_\cC(T)^{\op}$, where $\cC$ is the cluster category of $H$ and $T$ is the tilting object induced by $T'$, i.e.\ $T$ is the image of $T'$ under the natural embedding $i \from \mod H \to \cC$. Consider now the tilted algebra $C = \End_H(T')^{\op}$. Then we have that $B \simeq \End_H(T')^{\op} \ltimes \Hom_{\mathcal{D}}(T',FT')$ (\cite[proof of 3.1]{Z}). Now for the proof of the Theorem~\ref{thm.cta_vs_rel_extension} above, observe that $\Ext^2_C(DC,C) \simeq \Hom_{\cD}(T',FT')$.
 
 Let $S$ be a subset of the arrows of $Q_B$, the quiver $B$. As in \cite{BRS} we call the set $S$ \emph{admissible}\footnote{Called admissible cut in \cite{BFPPT}.} if $S$ contains exactly one arrow from each full oriented cycle, and no other arrows. Recall that  an oriented cycle in a quiver is called \emph{full} if there are no repeated vertices and  if the subquiver generated by the cycle contains no further arrows.

\section{Cluster-tilted algebras and local slices}
\label{section.Loc_slices}
In this section we discuss the theory of local slices, which lies behind our procedure to find all the maximal tilted subalgebras $C$ of a given cluster-tilted algebra $B$, where maximal means that $C \ltimes \Ext^2(DC,C) = B$. 

 Let $Q=(Q_0,Q_1)$ be a quiver without oriented cycles and $\cC = \cC_Q$ its cluster category. We fix a cluster-tilting object $T = \oplus_a T_a$ in $\cC$, where each $T_a$ is indecomposable for every $a \in Q_0$. Then we have the cluster-tilted algebra $B = \End_\cC (T)^{\op}$ and the induced decomposition $B=\oplus_a B_a$ in indecomposable projective $B$-modules.

Recall that a path $x=x_0 \to x_1 \to \ldots \to x_t=y$ in $\Gamma_\cC$ is \emph{sectional} if, for each $i$ with $0<i<t$, we have $\tau x_{i+1}\neq x_{i-1}$. 

\begin{definition} A \emph{local slice} in
$\cC$ is a full subquiver $\Sigma$ of $\Gamma_\cC$ such that:
\begin{enumerate}
  \item if $x \in \Sigma_0$ and $x \to y$ is an arrow, then either $y
    \in \Sigma_0$ or $\tau y \in \Sigma_0$.  
  \item if $y \in \Sigma_0$ and $x \to y$ is an arrow, then either $x
    \in \Sigma_0$ or $\tau^{-1} x \in \Sigma_0$.  
  \item $\Sigma$ is sectionally convex, i.e.\ if $x = x_0 \to x_1 \to
    \ldots \to x_t = y$ is a sectional path in $\Gamma_{\cC}$, such that
    $x,y \in \Sigma_0$, then $x_i \in \Sigma_0$ for all $i$.
   \item $|\Sigma_{0}|= |Q_0|$.  
\end{enumerate}
By abuse of notation we will sometimes view $\Sigma$ as a set of indecomposable objects, and sometimes as the subcategory consisting of all finite direct sums of these indecomposables.
\end{definition}

\begin{remark}
 Let $\Sigma$ be a local slice in $\cC$ and $T$ a cluster-tilting object such that 
 $\Sigma \cap \add_\cC (\tau T) = 0$. In this case, we say that $\Sigma$ is a local slice in $\cC \setminus \add(\tau T)$. Then, if $\pi \from \cC \to \mod B$ is the projection functor, we have that $\pi(\Sigma)$ is a local slice in $\mod B$ in the sense of \cite[11]{ABS2}. On the other hand, if $\Sigma'$ is a local slice in $\mod B$, then $\pi^{-1}(\Sigma')$ is in $\add (\Sigma \oplus \tau T)$, where $\Sigma$ is a local slice in $\cC$. It is not hard to see that we have a bijection between the local slices in $\cC \setminus \add(\tau T)$ and the set of local slices in $\mod B$. We will identify the two. \end{remark}

 For the rest of this section, we assume the quiver $Q$ to be Dynkin. In this case, we can read off the morphism and extension spaces of the indecomposable objects from the AR-quiver $\Gamma_\cC$. Furthermore, we can explicitly calculate the distribution of $T$ in $\Gamma_\cC$ by using the methods developed in \cite{BOW}. Hence we also assume this distribution to be known. It is therefore easier to illustrate the theory in this case. Later in \S~\ref{section_infinite}, we will explain how to generalize the theory for the infinite case.

We now recall some results from \cite{ABS2} which will be useful for our purposes.
\begin{theorem} \cite[19]{ABS2} \label{theorem.localslices}
 Let  $C$ be a subalgebra of the cluster-tilted algebra $B$. The algebra $C$ is maximal tilted if and only if there exists a local slice $\Sigma$ in $\mod B$ such that $C = B /\Ann_B \Sigma$.
\end{theorem}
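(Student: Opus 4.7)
The plan is to prove both implications by passing through the canonical description $B = C \ltimes \Ext^2_C(DC,C)$ and using the bijection between local slices in $\cC \setminus \add(\tau T)$ and local slices in $\mod B$ from the Remark.

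For the direction ($\Rightarrow$), suppose $C$ is a maximal tilted subalgebra of $B$. By Theorem~\ref{thm.cta_vs_rel_extension} we have $B = C \ltimes E$ with $E = \Ext^2_C(DC,C)$, inducing a canonical surjection $p \from B \twoheadrightarrow C$ with kernel $E$. Since $C$ is tilted, $\mod C$ contains a complete slice $\Sigma$ (a classical property of tilted algebras, going back to Happel--Ringel). Pulling $\Sigma$ back along $p$ realises it as a full subquiver of $\Gamma_{\mod B}$. I would then verify two things: (a) that $\Sigma$ satisfies the local-slice axioms in $\mod B$, which follows from comparing the AR-quivers of $\mod C$ and $\mod B$ (the former embeds into the latter, and the slice meets each $\tau_B$-orbit exactly once by construction); and (b) that $\Ann_B \Sigma = \ker p = E$. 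The inclusion $\Ann_B \Sigma \supseteq E$ is immediate because $\Sigma$ is a $C$-module; the reverse uses that a complete slice over a tilted algebra is a faithful module, so no element outside $E$ can annihilate it.

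For the direction ($\Leftarrow$), suppose $\Sigma$ is a local slice in $\mod B$. By the Remark, $\Sigma$ lifts uniquely to a local slice $\widetilde{\Sigma}$ in $\cC$ disjoint from $\add(\tau T)$. A local slice in the cluster category $\cC_Q$ is the image of a complete section of $\Gamma_\cD$ under the projection $\cD \to \cC$, and therefore corresponds to a tilting module $T''$ over some hereditary algebra $H'$ derived-equivalent to $H$. Setting $C' = \End_{H'}(T'')^{\op}$, one has that $C'$ is tilted and, using the description recalled after Theorem~\ref{thm.cta_vs_rel_extension}, its relation extension is $\End_\cC(T)^{\op} = B$. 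To identify $C'$ with $B/\Ann_B\Sigma$, I would again invoke faithfulness of the slice $\Sigma$ over $C'$ and compare with the surjection $B \twoheadrightarrow C'$ arising from the trivial-extension decomposition.

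The main obstacle in both directions is the precise identification $\Ann_B \Sigma = \Hom_\cD(T, FT)$ inside $B = \End_\cC(T)^{\op}$. One direction is conceptual: a morphism factoring as $T \to FT$ acts on an object $X \in \cC$ through the $\tau T$-component of the decomposition $\cC = \mod B \oplus \add(\tau T)$ in Theorem~\ref{theorem.modct}, and since $\widetilde{\Sigma}$ avoids $\add(\tau T)$ it must be annihilated. The other direction—that nothing outside the extension ideal annihilates $\Sigma$—is precisely the faithfulness of a complete slice over a tilted algebra, which is the key input I would quote from the theory of tilted algebras.
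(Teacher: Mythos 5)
This statement is imported verbatim from \cite[19]{ABS2}; the paper gives no proof of its own, so there is no internal argument to compare against. Your sketch is, in effect, an outline of the argument in that reference, and its overall shape is the right one: for ($\Rightarrow$), embed a complete slice of $\mod C$ into $\mod B$ along the split surjection $B \twoheadrightarrow C$ and compute its annihilator; for ($\Leftarrow$), lift the local slice to $\cC$ and then to $\cD$, recognize it as a cluster-tilting object with hereditary endomorphism algebra $H'$, and recover a tilted algebra from the tilting $H'$-module induced by $T$. The annihilator computation is also fine: $E \subseteq \Ann_B\Sigma$ because $E$ acts as zero on restricted $C$-modules, and $\Ann_B\Sigma \subseteq E$ because a complete slice is a faithful $C$-module.

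The genuine gap is in your step (a), which is where the actual content of the theorem lives. The justification ``the AR-quiver of $\mod C$ embeds into the AR-quiver of $\mod B$'' is false: restriction along $p \from B \twoheadrightarrow C$ is a full embedding of module categories, but it does not preserve irreducible maps or the AR-translate, and $\Gamma_{\mod C}$ and $\Gamma_{\mod B}$ are different translation quivers (they do not even have the same number of vertices in general). What is true --- and what requires proof --- is that the AR-structure is preserved only \emph{locally} around the image of the complete slice; this is precisely why the notion of local slice (rather than slice) is needed, and it is the content of \cite[17--18]{ABS2}, equivalently of the Corollary quoted immediately after the theorem in this paper. Asserting it ``follows from comparing the AR-quivers'' assumes the hard part. (A smaller inaccuracy: a local slice is not required to meet every $\tau_B$-orbit, so that clause of your verification of the axioms is off target.) The converse direction has the same character: that a local slice in $\cC$ avoiding $\add(\tau T)$ lifts to a complete section of $\Gamma_{\cD}$, and that $T$ then induces a tilting module over $\End_{\cC}(\Sigma)^{\op}$ whose endomorphism ring is a quotient of $B$ by an ideal generated by arrows, are the substantive steps and cannot be dispatched in one sentence each.
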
  
\begin{corollary}\cite[20]{ABS2}
Let $C$ be a tilted algebra and $B$ its relation extension. Then any complete slice in $\mod C$ embeds as a local slice in $\mod B$, and any local slice in $\mod B$ arises this way. 
\end{corollary}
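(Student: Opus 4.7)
The plan is to derive the corollary from Theorem~\ref{theorem.localslices} together with the classical fact that every tilted algebra admits a canonical complete slice in its module category.

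For the forward direction, I would start with a tilted algebra $C$ with relation extension $B$, and write $C = \End_H(T')^{\op}$ for some hereditary $H$ and tilting $H$-module $T'$. Standard tilting theory produces a complete slice $\Sigma_C$ in $\mod C$, for instance the image of the indecomposable injective $H$-modules under $\Hom_H(T',-)$. I would then regard the summands of $\Sigma_C$ as $B$-modules via the canonical surjection $B = C \ltimes \Ext^2_C(DC,C) \twoheadrightarrow C$, and verify axioms (a)--(d) of a local slice. Under the equivalence $\cC/\add(\tau T) \simeq \mod B$ of Theorem~\ref{theorem.modct} and the identification of the remark following the definition, the arrows and the $\tau$-action on $\Sigma_C$ inside $\mod B$ are exactly those seen in $\cC$, so conditions (a), (b) and the sectional convexity (c) transfer from the corresponding properties of $\Sigma_C$ as a slice in $\mod H$; condition (d) is immediate since $|\Sigma_C| = |Q_0|$.

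For the backward direction, I would take a local slice $\Sigma$ in $\mod B$ and set $C = B / \Ann_B \Sigma$. Theorem~\ref{theorem.localslices} gives that $C$ is a maximal tilted subalgebra of $B$, so that $B$ is its relation extension. The summands of $\Sigma$ automatically lie in $\mod C$, and I would verify they form a complete slice by translating axioms (a)--(d) into the classical defining properties of a complete slice (faithfulness, $\Ext^1$-vanishing, and a hereditary endomorphism ring), using again the identification of the remark following the definition to compare the AR-quivers of $\mod C$ and $\cC$.

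The main technical point is the dictionary between the combinatorial definition of a local slice (conditions on the AR-quiver of $\cC$) and the classical characterization of a complete slice in a tilted algebra. Once this translation is set up using the remark after the definition and Theorem~\ref{theorem.modct}, both directions reduce to routine axiom verification and present no further obstacles.
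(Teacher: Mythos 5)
First, note that the paper offers no proof of this statement at all: it is quoted verbatim from \cite[20]{ABS2}, so there is no internal argument to compare yours against. Your strategy -- derive the corollary from Theorem~\ref{theorem.localslices} plus the identification of $\mod B$ with $\cC/\add(\tau T)$ -- is the natural one and is essentially how the cited source proceeds. However, the place where you write that everything ``reduces to routine axiom verification'' is precisely where the actual content of the result lives, and as stated one step would fail. Conditions \emph{(a)} and \emph{(b)} of a local slice are conditions on almost split sequences of $\mod B$, and your claim that ``the arrows and the $\tau$-action on $\Sigma_C$ inside $\mod B$ are exactly those seen in $\cC$'' is not automatic: $C$ is a \emph{proper quotient} of $B$, and almost split sequences do not in general restrict along quotient maps ($\tau_C\neq\tau_B$ in general; this is exactly what goes wrong for the $C$-modules adjacent to the ``holes'' $\add(\tau T)$). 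To make the transfer legitimate you must first locate the complete slice inside $\Gamma_\cC$: writing $C=\End_H(T')^{\op}$ and taking the slice $\Hom_H(T',DH)$, one computes $\Hom_\cC(T,I)\cong\Hom_H(T',I)$ for $I$ injective (the correction term $\Hom_{\cD}(T',FI)$ vanishes because $FI$ is a twice-shifted projective and $H$ is hereditary, and the result is nonzero because $T'$ is sincere). Only after this identification do you know that the slice of $\mod C$ is the image of the section $DH\subseteq\Gamma_\cC$, that it misses $\add(\tau T)$, and that Theorem~\ref{theorem.modct} applies to read off its AR-neighbourhood. This computation, or an equivalent comparison of $\tau_C$ with $\tau_{\cC}$ along the slice, is missing from your outline and cannot be waved through.

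Two smaller points. The forward direction must handle an \emph{arbitrary} complete slice of $\mod C$, not just the canonical one; the reduction is standard (any complete slice $\Sigma'$ realises $C$ as tilted from the hereditary algebra $\End_C(\Sigma')^{\op}$, and the relation extension, hence $B$ and $\cC$, is unchanged), but it should be said. In the backward direction your plan is sound: faithfulness of $\Sigma$ over $C=B/\Ann_B\Sigma$ is immediate, and $\End_C(\Sigma)$ is hereditary because $\Sigma$ lifts to a section of $\Gamma_\cC$ and no map between its objects factors through $\add(\tau T)$; but again the verification that $\Sigma$ satisfies the module-theoretic axioms of a complete slice in $\mod C$ (rather than the combinatorial ones in $\Gamma(\mod B)$) requires the same comparison of AR-structures, so the same caveat applies.
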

 
 Given a local slice $\Sigma$ in $\mod B$, the ideal $\Ann_B \Sigma$ is generated by a subset $S$ of the set of arrows of the quiver of $B$ (\cite[21]{ABS2}). In fact, $\Ann_B \Sigma \simeq \Ext^2_C(DC,C)$, where $C = B / \Ann_B \Sigma$. We call this admissible set $S$ a \emph{tilted admissible} set. Observe that the arrows that belong to $S$ are obtained from the oriented cycles of $Q_B$. From \cite[3.7]{BR} we know that each of these arrows belongs to exactly one full oriented cycle of $Q_B$. It follows from Theorem~\ref{theorem.localslices} that we have a bijection between the tilted admissible subsets of the set of arrows of $Q_B$ and the maximal tilted subalgebras of $B$.

We want to give a procedure for finding these tilted admissible subsets.
 
\begin{definition}
 Let $X,Y$ be objects in a triangulated category $\cA$. Define
 $I(X,Y)$ to be the set of all the indecomposable objects $Z$ 
 in $\cA$ such that there exist
 morphisms $X \to Z \to Y$ with non-zero composition. 
\end{definition}

These sets of objects will be very useful in order to compute the generating arrows of $\Ann_B \Sigma$, where $\Sigma$ is local slice, by using the following theorem. 

\begin{theorem}\label{theorem.generate_annihilator}
Let $b \to a$ be an arrow of $Q_B$, the quiver of $B=\End_\cC(T)^{\op}$, where $T$ is a cluster-tilting object in $\cC$. Let $\Sigma$ be a local slice in $\mod
B$ and $S$ the tilted admissible set generating $\Ann_B \Sigma$. Then
we have the following:  
\begin{enumerate}
  \item The set $\tau I(T_a,T_b) \setminus \{\tau T_a, \tau T_b\} \neq \emptyset$ if and only if $b \to
    a$ lies on an oriented cycle. 
  \item The arrow  $b\to a $ belongs to $S$ if and only if $\tau
    I(T_a,T_b) \cap \Sigma \neq \emptyset$.
\end{enumerate}
\end{theorem}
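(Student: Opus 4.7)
The plan is to translate both statements into claims about morphisms in $\cC$ using the equivalence $\mod B \simeq \cC/\add(\tau T)$ of Theorem~\ref{theorem.modct}. Under this correspondence, an arrow $b \to a$ in $Q_B$ matches an irreducible morphism $\alpha : T_a \to T_b$ in $\add T$, and the admissible set $S$ consists of those arrows whose corresponding $\alpha$ lies in $\Ann_B \Sigma$.

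For part \emph{(a)}, since $\tau$ is an autoequivalence of $\cC$ and $\tau T_a, \tau T_b \in \add(\tau T)$ is disjoint from any local slice, the condition $\tau I(T_a, T_b) \setminus \{\tau T_a, \tau T_b\} \neq \emptyset$ is equivalent to $I(T_a, T_b) \setminus \{T_a, T_b\} \neq \emptyset$, i.e.\ $\alpha$ factors non-trivially through an indecomposable $Z \neq T_a, T_b$. For the ``only if'' direction, given such a $Z$ with $f: T_a \to Z$ and $g: Z \to T_b$ composing non-trivially, I would replace $f$ and $g$ by their minimal $\add T$-approximations; the non-vanishing of $gf$ forces a composable chain of irreducible morphisms in $\add T$ to be non-zero, and reading off the corresponding sequence of vertices in $Q_B$ yields a path $a \to c_1 \to \cdots \to c_k \to b$ that, combined with the arrow $b \to a$, completes an oriented cycle. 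Conversely, for the ``if'' direction, a cycle $b \to a \to c_1 \to \cdots \to c_k \to b$ supplies a chain of irreducible morphisms $T_b \to T_{c_k} \to \cdots \to T_{c_1} \to T_a$ whose composition is non-zero, and via the 2-Calabi--Yau duality $\Hom_\cC(T_a, T_b) \cong D\Hom_\cC(T_b, \tau^2 T_a)$ (using that $[1] = \tau$ in $\cC$) this non-vanishing ``backwards'' map feeds into the existence of an intermediate $Z$ factoring $\alpha$.

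For part \emph{(b)}, I would first unfold the action of $\alpha$ on the $B$-module $\Hom_\cC(T, Y)$: it sends $g \in \Hom_\cC(T_b, Y)$ to $g \circ \alpha \in \Hom_\cC(T_a, Y)$, so $\alpha \in \Ann_B(Y)$ iff the pre-composition map $\alpha^* : \Hom_\cC(T_b, Y) \to \Hom_\cC(T_a, Y)$ vanishes, i.e.\ no composition $T_a \overset{\alpha}{\to} T_b \to Y$ is non-zero. Applying 2-Calabi--Yau duality turns this into a condition on post-composition by $\tau^2\alpha$ between $\Hom_\cC(Y, \tau^2 T_a)$ and $\Hom_\cC(Y, \tau^2 T_b)$; after shifting by $\tau^{-1}$ this says that no factorization in $\cC$ witnesses $\tau^{-1} Y$ receiving a map from an object in $\tau I(T_a, T_b)$. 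Imposing this condition for every $Y \in \Sigma$ and using that $\Sigma$ is sectionally convex with $|\Sigma_0| = |Q_0|$, the universal annihilator condition collapses to the existence condition $\tau I(T_a, T_b) \cap \Sigma \neq \emptyset$.

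The main obstacle will be the delicate rephrasing in part \emph{(b)}, where a universally quantified annihilation condition on $\Sigma$ has to be replaced by an existential intersection condition on $\tau I(T_a, T_b) \cap \Sigma$. The key ingredient is the combinatorial rigidity of local slices together with the hammock description of morphisms in the cluster category: the convexity and completeness of $\Sigma$ force the ``support'' of $\alpha$ and the slice to interact in exactly one of two ways, witnessed precisely by whether or not $\tau I(T_a, T_b)$ meets $\Sigma$.
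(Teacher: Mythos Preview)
Your proposal has genuine gaps in both parts.

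\textbf{Part (a), ``if'' direction.} You claim that an oriented cycle $b \to a \to c_1 \to \cdots \to c_k \to b$ supplies a chain of irreducible morphisms $T_b \to T_{c_k} \to \cdots \to T_{c_1} \to T_a$ whose composition is non-zero. This is false in general: the relations of a cluster-tilted algebra come from a potential, and the partial derivative of the potential with respect to the arrow $b \to a$ is precisely (a sum including) the path $a \to c_1 \to \cdots \to c_k \to b$. If this is the only cycle through $b \to a$, that path is a relation and hence \emph{zero} in $B$; already in the $A_3$ three-cycle every length-two path vanishes. Even granting non-vanishing, you do not explain how $2$-Calabi--Yau duality converts a non-zero map $T_b \to T_a$ into a non-trivial factorisation of $\alpha \colon T_a \to T_b$ through some $Z \neq T_a, T_b$; the duality identifies Hom-spaces but says nothing about factorisations. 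The paper instead argues constructively: for a $3$-cycle one mutates at the third vertex $c$ to produce $T_{c^*} \in I(T_a,T_b)\setminus\{T_a,T_b\}$, and longer cycles are reduced by repeated mutation.

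\textbf{Part (b).} You correctly identify the main obstacle---replacing the universal condition ``$\alpha$ annihilates every $Y \in \Sigma$'' by the existential condition ``$\tau I(T_a,T_b)$ meets $\Sigma$''---but your proposed resolution via $2$-CY duality, hammocks, and ``combinatorial rigidity'' is not a proof; in particular the sentence about $\tau^{-1}Y$ ``receiving a map from an object in $\tau I(T_a,T_b)$'' does not follow from the duality you wrote down. The key idea you are missing is that $\Sigma$ is itself a cluster-tilting object in $\cC$ (it is a complete slice for the hereditary algebra $\End_\cC(\Sigma)^{\op}$). Applying the contravariant version of Theorem~\ref{theorem.modct} to $\Sigma$ gives an equivalence
\[
\Hom_\cC(-,\Sigma)\colon \cC/(\tau^-\Sigma) \longrightarrow \mod \End_\cC(\Sigma),
\]
so $\Hom_\cC(f,\Sigma)=0$ if and only if $f$ factors through some object of $\tau^-\Sigma$. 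This single observation converts the universal annihilation into an existential factorisation, and applying $\tau$ gives $\tau I(T_a,T_b)\cap\Sigma\neq\emptyset$ directly. No hammock combinatorics are needed.
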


\begin{proof} \mbox{ }
\begin{enumerate}
  \item First assume that $b\to a$ lies on an oriented cycle. It is enough to show that $I(T_a,T_b) \setminus \{T_a,T_b\}\neq \emptyset$. Recall that the relations of $B$ are given by a potential (\cite[5.11]{BIRSm},\cite[6.12]{K}).%, we have the following two cases.
%    \begin{enumerate}
%      \item 
Thus the arrow $b\to a$ belongs to at least one term in the potential. Choose from one of these terms, a path $\rho$ from $a$ to $b$. Thus we have an associated oriented cycle and we proceed by induction on the length $l$ of the cycle. For $l = 3$ we have the following diagrams in the quiver and in $\cC$:
\[ \begin{tikzpicture}[yscale=-1]
 \node (b) at (0,0) {$b$};
 \node (a) at (2,0) {$a$};
 \node (c) at (1,1) {$c$};
 \node (Ta) at (4,0) {$T_b$};
 \node (Tb) at (6,0) {$T_a$};
 \node (Tc) at (5,1) {$T_c$};
 \draw [->] (b) -- (a);
 \draw [->] (a) -- (c);
 \draw [->] (c) -- (b);
 \draw [<-] (Ta) -- (Tb);
 \draw [<-] (Tb) -- (Tc);
 \draw [<-] (Tc) -- (Ta);
\end{tikzpicture} \]
Now mutate at c to obtain:
\[ \begin{tikzpicture}[yscale=-1]
 \node (b) at (0,0) {$b$};
 \node (a) at (2,0) {$a$};
 \node (c) at (1,1) {$c^*$};
 \node (Ta) at (4,0) {$T_b$};
 \node (Tb) at (6,0) {$T_a$};
 \node (Tc) at (5,1) {$T_{c^*}$};
 \draw [->] (b) -- (c);
 \draw [->] (c) -- (a);
 \draw [<-] (Ta) -- (Tc);
 \draw [<-] (Tc) -- (Tb);
\end{tikzpicture} \]
Thus $T_{c^*}$ is in $I(T_a,T_b)\setminus \{T_a, T_b\}$ since the composition $T_a \to T_{c^*} \to T_b$ is non-zero. Now for a cycle $b \to a \to c_1 \to \cdots \to c_m \to b$, mutate at $c_m$ to shorten the length of the oriented cycle by one and just restrict to the new oriented cycle as in the following diagram
\[ \begin{tikzpicture}[yscale=-1]
 \node at (-4,0) {};
 \node (b) at (-1,0) {$b$};
 \node (a) at (1,0) {$a$};
 \node (c1) at (30:2) {$c_1$};
 \node (c2) at (60:2) {$c_2$};
 \node (c3) at (120:2) {$c_{m-1}$};
 \node (c4) at (150:2) {$c_m$};
 \draw [->] (b) -- (a);
 \draw [->] (a) -- (c1);
 \draw [->] (c1) -- (c2);
 \draw [->,thick,loosely dotted] (c2) -- (c3);
 \draw [->] (c3) -- (c4);
 \draw [->] (c4) -- (b);
\end{tikzpicture}
\begin{tikzpicture}[yscale=-1]
 \draw [leadsto] (-2.5,1) -- (-1.5,1);
 \node (b) at (-1,0) {$b$};
 \node (a) at (1,0) {$a$};
 \node (c1) at (30:2) {$c_1$};
 \node (c2) at (60:2) {$c_2$};
 \node (c3) at (120:2) {$c_{m-1}$};
 \draw [->] (b) -- (a);
 \draw [->] (a) -- (c1);
 \draw [->] (c1) -- (c2);
 \draw [->,thick,loosely dotted] (c2) -- (c3);
 \draw [->] (c3) -- (b);
\end{tikzpicture} \]
        Repeat the procedure until you get to the first case.
%     \item The arrow $b\to a$ belongs to two or more terms in the potential. Choose from one of these terms, a path $b \to a \leadsto c \leadsto b$ where $c \neq a,b$ from $a$ to $b$.  This path is not a zero-relation, and thus $T_a \leadsto T_c \leadsto T_b$ is a non-zero composition. Hence $T_c$ is in $I(T_a,T_b)$.      
%    \end{enumerate}

    Now assume that there exists $0\ne \tau X \in \tau I(T_a,T_b)\setminus \{\tau T_a, \tau T_b\}$. Resolve $X$ in terms of $T$ to obtain a triangle $X \to T_0 \to T_1 \to X[1]$ where $T_0,T_1 \in \add T$. Let $f \from T_a \to T_b$ be the morphism corresponding to the arrow $b\to a$. The claim follows if we show that there exists a minimal relation involving $f$. Using the triangle above, and the fact that $f$ factors through $X$, we have the following commutative diagram
\[\scalebox{0.9}{
\begin{tikzpicture}[scale=2.5,yscale=-1]
 \node (Ta) at (0,-0.75) {$T_a$};
 \node (T0) at (1,0) {$T_b\oplus T'_0$};
 \node (T1) at (2.25,0) {$T_c\oplus T'_1$};
 \node (X) at (0,0) {$X$};
\draw[->] (Ta) -- (X);
\draw[->] (X) -- (T0);
\draw[->] (Ta) -- node [auto] {$\begin{pmatrix} f \\ g \end{pmatrix}$} (T0.north); 
\draw[->] (T0) -- node [auto] {$\begin{pmatrix} p & q \\ r & s \end{pmatrix}$} (T1); 
\end{tikzpicture}} 
\]
where $p\ne 0$, $T_0=T_b\oplus T'_0$ and $T_1=T_c\oplus T'_1$, for an indecomposable summand $T_c$ of $T$. Since the composition $\left(\begin{smallmatrix} p & q \\ r & s \end{smallmatrix}\right) \left(\begin{smallmatrix} f\\ g \end{smallmatrix}\right)$ is zero, we obtain that $pf + qg=0$. Note that there is no term of the form $pf$ appearing in $qg$, because the approximations of the triangle are minimal.  Hence we have a minimal relation $pf+qg=0$.

  \item Assume $b \to a$ belongs to $S$. Since $S$ generates $\Ann_B \Sigma$ this is equivalent to $b \to a \in \Ann_B \Sigma$. We call the map $T_a \to T_b$ corresponding to this arrow $f$. Then $b \to a \in \Ann_B \Sigma$ if and only if $\Hom_\cC(f, \Sigma) = 0$.

By (the opposite version of) Theorem~\ref{theorem.modct} applied to the cluster-tilting object $\Sigma$ we have an equivalence
\[ \Hom_\cC(-, \Sigma) \colon \cC / (\tau^- \Sigma) \to \mod \End_\cC(\Sigma). \]
In particular $\Hom_\cC(f, \Sigma) = 0$ if and only if $f$ factors through some object in $\tau^- \Sigma$, say through $X$. Then clearly $\tau X \in \tau I (T_a, T_b) \cap \Sigma$, and hence the set is non-empty. If $\tau I (T_a, T_b) \cap \Sigma \neq \emptyset$ the map $\tau f$ factors through $\Sigma$, and thus $f$ factors through $\tau^- \Sigma$.
\end{enumerate}
\end{proof}

 We now give an example illustrating that, in order to produce tilted algebras, the arrows belonging to a tilted admissible set can not be chosen at random.

\begin{example} \label{main.example}
Let $B$ be the cluster-tilted algebra obtained from $D_5$ shown below, and $
C$ the subalgebra of $B$  obtained by removing  $S=\{1\to 2, 3\to 4\}$.
\[ \begin{tikzpicture}[scale=.7,yscale=-1]
 \node (B1) at (3,1) [inner sep=1pt] {1};
 \node (B2) at (2,0) [inner sep=1pt] {2};
 \node (B3) at (2,2) [inner sep=1pt] {3};
 \node (B4) at (1,1) [inner sep=1pt] {4};
 \node (B5) at (0,2) [inner sep=1pt] {5};
 \node (B) at (0,0) {$B \colon$};
 \node (C1) at (8,1) [inner sep=1pt] {1};
 \node (C2) at (7,0) [inner sep=1pt] {2};
 \node (C3) at (7,2) [inner sep=1pt] {3};
 \node (C4) at (6,1) [inner sep=1pt] {4};
 \node (C5) at (5,2) [inner sep=1pt] {5};
 \node (C) at (5,0) {$C \colon$};
 \draw [->] (B4) -- (B5);
 \draw [->] (B4) -- (B1);
 \draw [->] (B1) -- (B2);
 \draw [->] (B1) -- (B3);
 \draw [->] (B2) -- (B4);
 \draw [->] (B3) -- (B4);
 \draw [->] (C4) -- (C5);
 \draw [->] (C4) -- (C1);
 \draw [dashed] (C1) -- (C2);
 \draw [->] (C1) -- (C3);
 \draw [->] (C2) -- (C4);
 \draw [dashed] (C3) -- (C4);
\end{tikzpicture} \]

Here $C$ is not tilted. In fact $\gldim C = 3$, and $C$ is iterated tilted of type $A_5$.
\end{example}

In the light of the previous example, we have the following
definitions. Let $b \to a, c\to d$ be in $S$ where $S$ is an admissible
set in $Q_B$. We say that $b\to a$ and $c\to d$  are
\emph{compatible} if there exists a local slice $\Sigma$ in $\cC \setminus \add(\tau T)$
such that $\Sigma \cap \tau I(T_a,T_b)$ and $ \Sigma \cap \tau I(T_c,T_d)$ are
both non-empty. Otherwise we say that the arrows are \emph{not
  compatible}. The \emph{span} of $b \to a$ is defined to be the
set of indecomposable  modules $X$ in $\cC $ such that there exists
a local slice $\Sigma$  in $\cC \setminus \add(\tau T)$, with $X \in \Sigma$ and
$\Sigma\cap\tau I(T_a,T_b) \neq \emptyset$. We denote it by $\sspan
(b\to a)$. Denote by $\sspan(S) = \cap_{b\to a \text{ in } S} \sspan(b\to
a)$. Thus we have the following. 
\begin{proposition} \label{prop.tilted_admissible.loc_slice} Let $B=\End_\cC (T)^\op$ be a cluster-tilted algebra. 
An admissible set $S$ is tilted if and only if there exists a
local slice $\Sigma \in \cC\setminus \add(\tau T)$ contained in $\sspan(S)$.
\end{proposition}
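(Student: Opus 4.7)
The plan is to use Theorem~\ref{theorem.generate_annihilator}(b), which characterises, for any local slice $\Sigma$ in $\cC\setminus\add(\tau T)$, the tilted admissible set $S_\Sigma$ generating $\Ann_B \Sigma$ as exactly the set of arrows $b\to a$ of $Q_B$ with $\tau I(T_a,T_b)\cap\Sigma\neq\emptyset$. By definition $S$ is tilted iff $S=S_\Sigma$ for some such $\Sigma$, so the proposition reduces to a statement about when the associated set coincides with $S$.

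For the ``only if'' direction, fix a local slice $\Sigma$ with $S=S_\Sigma$. Then $\Sigma\cap\tau I(T_a,T_b)\neq\emptyset$ for every $b\to a\in S$, so for each indecomposable $X\in\Sigma$, the slice $\Sigma$ itself plays the role required in the definition of $\sspan(b\to a)$, witnessing $X\in\sspan(b\to a)$ for every $b\to a\in S$. Hence $\Sigma\subseteq\bigcap_{b\to a\in S}\sspan(b\to a)=\sspan(S)$.

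For the ``if'' direction, let $\Sigma$ be a local slice in $\cC\setminus\add(\tau T)$ contained in $\sspan(S)$, and let $S_\Sigma$ be its associated tilted admissible set. The goal is to show $S=S_\Sigma$, which immediately proves $S$ tilted. Both $S$ and $S_\Sigma$ are admissible, and by \cite[3.7]{BR} (already invoked earlier in the text) every arrow of $Q_B$ on an oriented cycle lies on a unique full oriented cycle; therefore both sets contain exactly one arrow per full oriented cycle and so have the same cardinality. It thus suffices to establish the inclusion $S\subseteq S_\Sigma$, which by Theorem~\ref{theorem.generate_annihilator}(b) reduces to the assertion that $\Sigma\cap\tau I(T_a,T_b)\neq\emptyset$ for every $b\to a\in S$.

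This last step is the main obstacle. The hypothesis $\Sigma\subseteq\sspan(b\to a)$ only supplies, for each $X\in\Sigma$, \emph{some} local slice $\Sigma_X\ni X$ meeting $\tau I(T_a,T_b)$, and \emph{a priori} none of these $\Sigma_X$ needs to equal $\Sigma$. To upgrade this pointwise witnessing to an honest intersection by $\Sigma$ itself, my plan is to describe $\tau I(T_a,T_b)$ as a concrete connected region of $\Gamma_\cC$ by using the $T$-resolutions appearing in the proof of Theorem~\ref{theorem.generate_annihilator}(a), and then invoke the sectional-convexity axiom (c) of a local slice: once $\sspan(b\to a)$ is identified as a ``two-sided'' neighbourhood of $\tau I(T_a,T_b)$ in $\Gamma_\cC$, any complete local slice lying inside it is forced to cross $\tau I(T_a,T_b)$, finishing the argument.
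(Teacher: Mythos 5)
Your proof has the same skeleton as the paper's: both directions reduce to Theorem~\ref{theorem.generate_annihilator}~(b), and the final identification of $S$ with the generating set $S_\Sigma$ of $\Ann_B\Sigma$ uses that an inclusion of admissible sets forces equality (your cardinality count via the uniqueness of the full cycle through each arrow is the same observation the paper makes by simply saying ``both sets are admissible''). The only divergence is at the step you single out as the main obstacle, namely that $\Sigma\subseteq\sspan(b\to a)$ forces $\Sigma\cap\tau I(T_a,T_b)\neq\emptyset$. The paper dispatches this with the words ``by the definition of $\sspan(b\to a)$'', in effect treating a slice contained in the span as automatically being one of the witnessing slices; you are right that a literal reading of the definition only yields, for each $X\in\Sigma$, \emph{some} slice $\Sigma_X$ through $X$ meeting $\tau I(T_a,T_b)$. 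Your proposed repair --- exhibiting $\tau I(T_a,T_b)$ as a suitably convex region of $\Gamma_{\cC}$ and using the section/sectional-convexity properties of local slices to show that a slice lying entirely in $\sspan(b\to a)$ cannot slip past it --- is the right kind of argument and is essentially why the assertion is true (in the Dynkin case a local slice meets each $\tau$-orbit once, and a slice avoiding $\tau I(T_a,T_b)$ while staying in its span would have to place adjacent orbit-representatives on opposite sides of that region, which the mesh conditions forbid). But as written this remains a plan rather than a proof, so your argument is not complete at exactly the point where the paper is also silent; I would therefore classify your proposal as the same proof as the paper's, with the one unproved step made explicit rather than elided.
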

\begin{proof}
Assume that $S$ is tilted. Then there exists a local slice $\Sigma$ such that $\Ann_B \Sigma$ is generated by $S$. Let $b\to a$ be in $S$. By Theorem~\ref{theorem.generate_annihilator}~(b) we have that $\Sigma \cap \tau I(T_a,T_b)\neq \emptyset$, and thus $\Sigma \subset \sspan(b\to a)$. Since this is true for every arrow of $S$, we conclude that $\Sigma \subset \sspan(S)$.

Assume now that $\Sigma$ is a local slice in $\cC\setminus \add(\tau T)$ contained in $\sspan(S)$. For every $b \to a$ in $S$ we have that $\Sigma \subset \sspan(b\to a)$ and thus $\Sigma \cap \tau I(T_a,T_b)\neq \emptyset $, by the definition of $\sspan(b\to a)$. Let $S'$ be the generating set of $\Ann_B \Sigma$. Then by Theorem~\ref{theorem.generate_annihilator}~(b) we have that $S \subset S'$, but since both sets are admissible, they must be equal. Thus $S$ is tilted.
\end{proof}

For $\cC = \cC_Q$ and $Q$ Dynkin, the Hom-spaces can easily be read off from the AR-quiver, and it is not difficult to compute the sets $I(X,Y)$ for $X,Y$ objects in $\cC$ and the span of an admissible set $S$. 

\begin{example} Let $B$ be the cluster-tilted algebra of type $D_5$ from Example~\ref{main.example} and consider the admissible sets $S_1 = \{1 \to 2, 3\to 4 \}$ and $S_2=\{2\to 4, 3\to 4\}$. Let us check if they are tilted. We do the calculations in the AR-quiver of the cluster category of $D_5$.
\[ \begin{tikzpicture}[scale=.8,yscale=-1]
 \fill [fill1] (0.5,0) -- (1,0.5) -- (0.5,1) -- (1,1.5) -- (1.5,1) -- (2.5,2) -- (3,1.5) -- (1,-0.5) -- cycle;
 \fill [fill2] (4.5,0) -- (3,1.5) -- (4.5,3) -- (6.5,1) -- (7,1.5) -- (7.5,1) -- (7,0.5) -- (7.5,0) --  (7,-0.5) -- (5, -0.5) -- cycle;
 \fill [fill12] (3,1.5)  -- (2.5,2) -- (1.5,3) -- (2,3.5) -- (4,3.5) -- (4.5,3) -- cycle;
 \node (P31) at (0,1) [vertex] {};
 \node (P51) at (0,3) [tvertex] {$\tau T_5$};
 \node (P11) at (1,0) [vertex]{};
 \node (P21) at (1,1) [vertex]{};
 \node (P41) at (1,2) [tvertex] {$\tau T_4$};
 \node (P32) at (2,1) [tvertex] {$X$};
 \node (P52) at (2,3) [tvertex] {$T_5$};
 \node (P12) at (3,0) [tvertex] {$\tau T_2$};
 \node (P22) at (3,1) [tvertex] {$\tau T_3$};
 \node (P42) at (3,2) [tvertex] {$T_4$};
 \node (P33) at (4,1) [tvertex] {$Z_1$};
 \node (P53) at (4,3) [vertex] {};
 \node (P13) at (5,0) [tvertex] {$T_2$};
 \node (P23) at (5,1) [tvertex] {$T_3$};
 \node (P43) at (5,2) [tvertex] {$Z_2$};
 \node (P34) at (6,1) [vertex] {};
 \node (P54) at (6,3) [tvertex] {$\tau T_1$};
 \node (P14) at (7,0) [vertex] {};
 \node (P24) at (7,1) [vertex] {};
 \node (P44) at (7,2) [vertex] {};
 \node (P35) at (8,1) [vertex] {};
 \node (P55) at (8,3) [tvertex] {$T_1$};
 \node (P15) at (9,0) [vertex] {};
 \node (P25) at (9,1) [vertex] {};
 \node (P45) at (9,2) [vertex] {};
 \node (P36) at (10,1) [vertex] {};
 \node (P56) at (10,3) [tvertex] {$\tau T_5$};
 \draw [dashed] (0,-.5) -- (P51);
 \draw [dashed] (P51) -- (0,3.5);
 \draw [dashed] (10,-.5) -- (P56);
 \draw [dashed] (P56) -- (10,3.5);
 \draw [->] (P31) -- (P11);
 \draw [->] (P31) -- (P21);
 \draw [->] (P31) -- (P41);
 \draw [->] (P51) -- (P41);
 \draw [->] (P11) -- (P32);
 \draw [->] (P21) -- (P32);
 \draw [->] (P41) -- (P32);
 \draw [->] (P41) -- (P52);
 \draw [->] (P32) -- (P12);
 \draw [->] (P32) -- (P22);
 \draw [->] (P32) -- (P42);
 \draw [->] (P52) -- (P42);
 \draw [->] (P12) -- (P33);
 \draw [->] (P22) -- (P33);
 \draw [->] (P42) -- (P33);
 \draw [->] (P42) -- (P53);
 \draw [->] (P33) -- (P13);
 \draw [->] (P33) -- (P23);
 \draw [->] (P33) -- (P43);
 \draw [->] (P53) -- (P43);
 \draw [->] (P13) -- (P34);
 \draw [->] (P23) -- (P34);
 \draw [->] (P43) -- (P34);
 \draw [->] (P43) -- (P54);
 \draw [->] (P34) -- (P14);
 \draw [->] (P34) -- (P24);
 \draw [->] (P34) -- (P44);
 \draw [->] (P54) -- (P44);
 \draw [->] (P14) -- (P35);
 \draw [->] (P24) -- (P35);
 \draw [->] (P44) -- (P35);
 \draw [->] (P44) -- (P55);
 \draw [->] (P35) -- (P15);
 \draw [->] (P35) -- (P25);
 \draw [->] (P35) -- (P45);
 \draw [->] (P55) -- (P45);
 \draw [->] (P15) -- (P36);
 \draw [->] (P25) -- (P36);
 \draw [->] (P45) -- (P36);
 \draw [->] (P45) -- (P56);
\end{tikzpicture} \]
In the figure above, $\tau I(T_4,T_3) = \{X\}$ and $\tau I (T_2,T_1)= \{Z_1,Z_2\}$.  The set $\sspan(3\to 4)$ is shown in light grey, $\sspan(1\to 2)$ in darker grey and the set $\sspan(3\to 4) \cap \sspan(1\to 2)$ in dark grey. It is clear that there is no local slice in the intersection and hence the admissible set $S_1$ is not tilted. Therefore the arrows $3\to 4$ and $1 \to 2$ are not compatible. We already knew that $S_1$ is not tilted, since this admissible set produces the subalgebra $C$ of $B$ in Example~\ref{main.example}. 

Next we consider $S_2$.
\[ \begin{tikzpicture}[scale=.8,yscale=-1]
 \fill [fill12] (0.5,0) -- (1,0.5) -- (0.5,1) -- (1,1.5) -- (1.5,1) -- (2.5,2) -- (3,1.5) --  (2.5,2) -- (1.5,3) -- (2,3.5) -- (4,3.5) -- (4.5,3) -- (1,-0.5) -- cycle;
 \node (P31) at (0,1) [vertex] {};
 \node (P51) at (0,3) [tvertex] {$\tau T_5$};
 \node (P11) at (1,0) [vertex]{};
 \node (P21) at (1,1) [vertex]{};
 \node (P41) at (1,2) [tvertex] {$\tau T_4$};
 \node (P32) at (2,1) [tvertex] {$X$};
 \node (P52) at (2,3) [tvertex] {$T_5$};
 \node (P12) at (3,0) [tvertex] {$\tau T_2$};
 \node (P22) at (3,1) [tvertex] {$\tau T_3$};
 \node (P42) at (3,2) [tvertex] {$T_4$};
 \node (P33) at (4,1) [vertex] {};
 \node (P53) at (4,3) [vertex] {};
 \node (P13) at (5,0) [tvertex] {$T_2$};
 \node (P23) at (5,1) [tvertex] {$T_3$};
 \node (P43) at (5,2) [vertex] {};
 \node (P34) at (6,1) [vertex] {};
 \node (P54) at (6,3) [tvertex] {$\tau T_1$};
 \node (P14) at (7,0) [vertex] {};
 \node (P24) at (7,1) [vertex] {};
 \node (P44) at (7,2) [vertex] {};
 \node (P35) at (8,1) [vertex] {};
 \node (P55) at (8,3) [tvertex] {$T_1$};
 \node (P15) at (9,0) [vertex] {};
 \node (P25) at (9,1) [vertex] {};
 \node (P45) at (9,2) [vertex] {};
 \node (P36) at (10,1) [vertex] {};
 \node (P56) at (10,3) [tvertex] {$\tau T_5$};
 \draw [dashed] (0,-.5) -- (P51);
 \draw [dashed] (P51) -- (0,3.5);
 \draw [dashed] (10,-.5) -- (P56);
 \draw [dashed] (P56) -- (10,3.5);
 \draw [->] (P31) -- (P11);
 \draw [->] (P31) -- (P21);
 \draw [->] (P31) -- (P41);
 \draw [->] (P51) -- (P41);
 \draw [->] (P11) -- (P32);
 \draw [->] (P21) -- (P32);
 \draw [->] (P41) -- (P32);
 \draw [->] (P41) -- (P52);
 \draw [->] (P32) -- (P12);
 \draw [->] (P32) -- (P22);
 \draw [->] (P32) -- (P42);
 \draw [->] (P52) -- (P42);
 \draw [->] (P12) -- (P33);
 \draw [->] (P22) -- (P33);
 \draw [->] (P42) -- (P33);
 \draw [->] (P42) -- (P53);
 \draw [->] (P33) -- (P13);
 \draw [->] (P33) -- (P23);
 \draw [->] (P33) -- (P43);
 \draw [->] (P53) -- (P43);
 \draw [->] (P13) -- (P34);
 \draw [->] (P23) -- (P34);
 \draw [->] (P43) -- (P34);
 \draw [->] (P43) -- (P54);
 \draw [->] (P34) -- (P14);
 \draw [->] (P34) -- (P24);
 \draw [->] (P34) -- (P44);
 \draw [->] (P54) -- (P44);
 \draw [->] (P14) -- (P35);
 \draw [->] (P24) -- (P35);
 \draw [->] (P44) -- (P35);
 \draw [->] (P44) -- (P55);
 \draw [->] (P35) -- (P15);
 \draw [->] (P35) -- (P25);
 \draw [->] (P35) -- (P45);
 \draw [->] (P55) -- (P45);
 \draw [->] (P15) -- (P36);
 \draw [->] (P25) -- (P36);
 \draw [->] (P45) -- (P36);
 \draw [->] (P45) -- (P56);
\end{tikzpicture} \]
Here we have that $\tau I(T_4,T_3) = \tau I (T_4,T_2)= \{X\}$.  The set $\sspan(3\to 4) = \sspan(2\to 4)$ is shown in dark grey. There are two local slices contained in $\sspan(S_2)$ and thus $S_2$ is tilted and the arrows $3\to 4$ and $2 \to 4$ are compatible. Observe that both local slices share the same annihilator.
\end{example}

As the example above shows, there may be many local slices whose annihilator is generated by the
same tilted admissible set $S$. We will now define an equivalence relation on the set of local slices such that two local slices belong to the same equivalence class if and only if they share the same annihilator.

\begin{definition}
 Let $\Sigma$ be a local slice in $\mod B$ and $X$ an indecomposable  object in $\Sigma$. Define $\tau^{+}_X \Sigma = (\Sigma \setminus{X}) \cup  \tau X$. Similarly, we define $\tau_X^{-}\Sigma =(\Sigma \setminus{X}) \cup \tau^{-} X $. 
\end{definition}

 It is not difficult to see that $\tau^{+}_X \Sigma$ is a local  slice in $\mod B$ if and only if $\tau X$ is defined and $X$ is a sink when restricted to  $\Sigma$. Equivalently, $\tau^{+}_X \Sigma$ is a local slice in $\cC \setminus \add(\tau T)$ if and only if $\tau X \not\in \add(\tau T)$  and  $X$ is a sink when restricted to  $\Sigma$.  There is a dual remark  for $\tau^{-}_X \Sigma$.

\begin{definition} \label{definition.homotopy}
 Let $\Sigma$ and $\Sigma'$ be two local slices in $\mod B$. We write  $\Sigma \sim \Sigma'$ if there exists a  sequence of indecomposable modules $X_1,\ldots,X_m$ such that $\Sigma_i = \tau^{\pm}_{X_i} \Sigma_{i-1}$ is a local slice, $\Sigma_0 = \Sigma$, $\Sigma_m = \Sigma'$ and $X_i \in \Sigma_{i-1}$ for $1 \le i \le  m$. In this case, we say that $\Sigma$ is \emph{homotopic} to $\Sigma'$.
\end{definition}

The symbol $\pm$ means that one can choose either $+$ or $-$ in the  sequence.

 Note that two local slices are homotopic if one can move from one to the other without passing through the ``holes'' of $\mod B$, i.e.\ the holes made by $\tau T$ in the equivalence $\cC/ (\tau T) \simeq \mod B$.

We introduce the following notation for AR-triangles. If $X$ is an indecomposable object in $\cC$, we have two AR-triangles associated to $X$:
\[
  X \to \vartheta^{-}X \to \tau^{-}X \to \mbox{ and }
  \tau X \to \vartheta X \to X \to
\]
where $\vartheta^{-} X$ and $\vartheta X$ just denote the middle term of the corresponding AR-triangle.

We will now define an equivalence relation on the set of indecomposable summands of the cluster-tilting object $T$. 

\begin{definition} \label{definition.celldecomp}
Let $T_a,T_b$ be two non-isomorphic indecomposable summands of $T$. We say that $T_a \equiv_1 T_b$ if there exists an AR-triangle $\tau X \to \vartheta X \to X \to$  such that $ T_a,T_b$ are direct summands of $ \tau X \oplus \vartheta X \oplus X$. Take $\equiv$ to be the minimal equivalence relation containing $\equiv_1$. We call the equivalence class $[T_a]$ a \emph{cell} and $\tau [T_a]$ a \emph{trench}. Then we have a partition of the summands of $T$ and we write $T = \oplus_k \wT_k$, where each $\wT_k$ is the sum of all the indecomposable summands belonging to the same cell. We call this the \emph{cell decomposition} of $T$. 
\end{definition}

Similarly, $B$ and $C$ inherit a cell decomposition, where $C$ is any maximal tilted subalgebra of the cluster-tilted algebra $B$ associated to $T$.

At the level of quivers, we also inherit a cell decomposition. The cells of $Q_B$ are the full subquivers $Q_{\End_\cC (\wT_k)^\op}$ for the corresponding $k$.

 Let $\Sigma$ be a local slice in $\cC\setminus \add(\tau T)$. A cell  $[T_a]$ is called a  \emph{relative source} with respect to $\Sigma$  if whenever there  is a non-zero morphism from the cell $\wT_j$ to $\wT_k$ for $j\neq k$ we have that  $\Sigma\cap\tau I(\wT_j,\wT_k)\neq\emptyset$. Then we also call  the cells $\wB_k$ and $\wC_k$ a relative source. 

\begin{example}\label{example.trenches}
Let $B$ be the cluster-tilted algebra of type $D_5$ from Example~\ref{main.example}. Then a cluster-tilting object $T=\oplus_{i=1}^5 T_i$ such that $B=\End_\cC(
T)^{\op}$ is given in the AR-quiver of the cluster category of  $D_5$ below. 

\[ \begin{tikzpicture}[scale=.8,yscale=-1]
 \fill [fill1] (0.5,0) -- (1,0.5) -- (0.5,1) -- (1,1.5) -- (1.5,1) -- (2,1.5) -- (4,3.5) -- (4.5,3) -- (1,-0.5) -- cycle;
 \node (P31) at (0,1) [vertex] {};
 \node (P51) at (0,3) [tvertex] {$\tau T_5$};
 \node (P11) at (1,0) [vertex]{};
 \node (P21) at (1,1) [vertex]{};
 \node (P41) at (1,2) [tvertex] {$\tau T_4$};
 \node (P32) at (2,1) [vertex] {};
 \node (P52) at (2,3) [tvertex] {$T_5$};
 \node (P12) at (3,0) [tvertex] {$\tau T_2$};
 \node (P22) at (3,1) [tvertex] {$\tau T_3$};
 \node (P42) at (3,2) [tvertex] {$T_4$};
 \node (P33) at (4,1) [vertex] {};
 \node (P53) at (4,3) [vertex] {};
 \node (P13) at (5,0) [tvertex] {$T_2$};
 \node (P23) at (5,1) [tvertex] {$T_3$};
 \node (P43) at (5,2) [vertex] {};
 \node (P34) at (6,1) [vertex] {};
 \node (P54) at (6,3) [tvertex] {$\tau T_1$};
 \node (P14) at (7,0) [vertex] {};
 \node (P24) at (7,1) [vertex] {};
 \node (P44) at (7,2) [vertex] {};
 \node (P35) at (8,1) [vertex] {};
 \node (P55) at (8,3) [tvertex] {$T_1$};
 \node (P15) at (9,0) [vertex] {};
 \node (P25) at (9,1) [vertex] {};
 \node (P45) at (9,2) [vertex] {};
 \node (P36) at (10,1) [vertex] {};
 \node (P56) at (10,3) [tvertex] {$\tau T_5$};
 \draw [dashed] (0,-.5) -- (P51);
 \draw [dashed] (P51) -- (0,3.5);
 \draw [dashed] (10,-.5) -- (P56);
 \draw [dashed] (P56) -- (10,3.5);
 \draw [->] (P31) -- (P11);
 \draw [->] (P31) -- (P21);
 \draw [->] (P31) -- (P41);
 \draw [->] (P51) -- (P41);
 \draw [->] (P11) -- (P32);
 \draw [->] (P21) -- (P32);
 \draw [->] (P41) -- (P32);
 \draw [->] (P41) -- (P52);
 \draw [->] (P32) -- (P12);
 \draw [->] (P32) -- (P22);
 \draw [->] (P32) -- (P42);
 \draw [->] (P52) -- (P42);
 \draw [->] (P12) -- (P33);
 \draw [->] (P22) -- (P33);
 \draw [->] (P42) -- (P33);
 \draw [->] (P42) -- (P53);
 \draw [->] (P33) -- (P13);
 \draw [->] (P33) -- (P23);
 \draw [->] (P33) -- (P43);
 \draw [->] (P53) -- (P43);
 \draw [->] (P13) -- (P34);
 \draw [->] (P23) -- (P34);
 \draw [->] (P43) -- (P34);
 \draw [->] (P43) -- (P54);
 \draw [->] (P34) -- (P14);
 \draw [->] (P34) -- (P24);
 \draw [->] (P34) -- (P44);
 \draw [->] (P54) -- (P44);
 \draw [->] (P14) -- (P35);
 \draw [->] (P24) -- (P35);
 \draw [->] (P44) -- (P35);
 \draw [->] (P44) -- (P55);
 \draw [->] (P35) -- (P15);
 \draw [->] (P35) -- (P25);
 \draw [->] (P35) -- (P45);
 \draw [->] (P55) -- (P45);
 \draw [->] (P15) -- (P36);
 \draw [->] (P25) -- (P36);
 \draw [->] (P45) -- (P36);
 \draw [->] (P45) -- (P56);
\end{tikzpicture} \]

Here the dashed lines are identified. We then have three cells, given by $[T_1] = T_1, [T_2] =T_2\oplus T_3$ and $[T_4]=T_4 \oplus T_5$. The three trenches are $X_i = \tau [T_i]$ for $i=1,2,4$.

Let $\Sigma$ be the local slice given by the grey area. Then we have that $[T_2]$ is a relative source with respect to$\Sigma$ and $[T_4]$ is a relative sink with respect to $\Sigma$. The maximal tilted subalgebra $C$ associated to $\Sigma$ is given by
\[ \begin{tikzpicture}[scale=.7,yscale=-1]
 \draw [rounded corners=5pt,dotted,thick] (1.6,-.4) -- (2.4,-.4) -- (2.4,2.4) -- (1.6,2.4) -- cycle;
 \draw [rounded corners=5pt,dotted,thick] (-.57,2) -- (0,2.57) -- (1.57,1) -- (1,.43) -- cycle;
 \draw [dotted,thick](2.95,1) circle (.4);
 \node (C1) at (3,1) [inner sep=1pt] {1};
 \node (C2) at (2,0) [inner sep=1pt] {2};
 \node (C3) at (2,2) [inner sep=1pt] {3};
 \node (C4) at (1,1) [inner sep=1pt] {4};
 \node (C5) at (0,2) [inner sep=1pt] {5};
 \draw [->] (C4) -- (C5);
 \draw [->] (C4) -- (C1);
 \draw [->] (C1) -- (C2);
 \draw [->] (C1) -- (C3);
 \draw [dashed] (C2) -- (C4);
 \draw [dashed] (C3) -- (C4);
\end{tikzpicture} \]
whose cell decomposition is as indicated by the dots in the figure above.
\end{example}

We now give a criterion for when two local slices give rise to the same maximal tilted subalgebra.

\begin{theorem} \label{theorem.homotopy}
Let $\Sigma$ and $\Sigma'$ be two local slices in $\mod B$. Then $\Ann_B \Sigma = \Ann_B \Sigma'$ if and only if $\Sigma \sim \Sigma'$.
\end{theorem}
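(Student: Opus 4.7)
The plan is to treat the two implications separately, with the converse being the main challenge.

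For $\Sigma \sim \Sigma' \Rightarrow \Ann_B \Sigma = \Ann_B \Sigma'$, it suffices by induction on the length of a homotopy to check that a single elementary move $\Sigma' = \tau^{\pm}_X \Sigma$ preserves the annihilator. Treating the case $\Sigma' = \tau^+_X \Sigma$ (the other is dual) and using Theorem~\ref{theorem.generate_annihilator}(b), this reduces to showing that for every arrow $b \to a$ of $Q_B$ on an oriented cycle, $\Sigma \cap \tau I(T_a,T_b) \neq \emptyset$ iff $\Sigma' \cap \tau I(T_a,T_b) \neq \emptyset$. This is automatic whenever the moved object $X$ is not itself the unique witness in $\Sigma$ (and dually for $\tau X$), so the real task is to rule out the problematic case. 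The argument should exploit the fact that $\tau I(T_a,T_b)$ forms a ``rectangle'' in the AR-quiver of $\cC$, together with the sectional convexity of $\Sigma$ and the hypothesis that $X$ is a sink of $\Sigma$, to produce a neighbor of $X$ which remains in $\Sigma'$ and also lies in $\tau I(T_a,T_b)$.

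For the converse $\Ann_B \Sigma = \Ann_B \Sigma' \Rightarrow \Sigma \sim \Sigma'$, the plan is induction on $|\Sigma \setminus \Sigma'|$. Assuming $\Sigma \neq \Sigma'$, one seeks an object $X \in \Sigma \setminus \Sigma'$ that is a sink or source of $\Sigma$, and such that the corresponding move (i) is legal, i.e.\ $\tau^{\pm} X \notin \add(\tau T)$, and (ii) strictly decreases the symmetric difference with $\Sigma'$. The first direction ensures the annihilator is preserved, so after finitely many moves one reaches $\Sigma'$ and obtains the desired homotopy.

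The crucial and hardest step is showing that a valid reducing move always exists. The only obstruction would be that every sink or source of $\Sigma$ lying in the symmetric difference is adjacent to a summand of $\tau T$, so that every candidate move crosses a trench. One would argue that in this situation the annihilators of $\Sigma$ and $\Sigma'$ must in fact differ: the trench $\tau \wT_k$ separating them forces certain arrows coming from oriented cycles involving $\wT_k$ to lie in exactly one of $\Ann_B \Sigma$ and $\Ann_B \Sigma'$, via Theorem~\ref{theorem.generate_annihilator}(b) applied to the rectangles flanking the trench. Invoking the cell/trench decomposition of Definition~\ref{definition.celldecomp}, this reduces to a combinatorial statement: homotopy classes of local slices are indexed by the ``side'' chosen with respect to each trench, and this datum is exactly what the annihilator records.
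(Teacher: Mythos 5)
Your overall strategy---using Theorem~\ref{theorem.generate_annihilator}(b) to translate equality of annihilators into the position of the slices relative to the sets $\tau I(T_a,T_b)$, and hence relative to the trenches---is the same insight the paper's proof rests on, but both of your key steps are left at the level of ``should work'', and each hides a real difficulty. For the direction $\Sigma\sim\Sigma'\Rightarrow\Ann_B\Sigma=\Ann_B\Sigma'$, your move-by-move argument needs the claim that an elementary move $\tau^{+}_X$ cannot destroy the nonemptiness of $\Sigma\cap\tau I(T_a,T_b)$, and the proposed mechanism (find a neighbour of $X$ inside the region) fails as stated: $\tau I(T_a,T_b)$ need not be a rectangle and can be a single object (in the paper's $D_5$ example $\tau I(T_4,T_3)=\{X\}$), in which case there is no neighbour to fall back on. The actual reason the bad case does not occur is different: when $X$ is the unique witness, the slice axioms force $\Sigma$ to contain a proper successor of $X$ (in that example, axiom (a) applied to the arrow $X\to T_4$ forces $T_4\in\Sigma$ because $\tau T_4\in\add\tau T$ is forbidden), so $X$ is not a sink and the move is simply unavailable. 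This needs to be proved in general; the paper sidesteps the case analysis by arguing globally that a homotopy never crosses a trench, so homotopic slices have the same trenches on each side, and this datum determines the killed arrows.

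For the converse, your induction is plausible in outline, but the decisive step---``if every reducing move is blocked by a trench then the annihilators differ''---is exactly where the content lies, and the sketch does not supply it; your closing sentence (``this datum is exactly what the annihilator records'') is essentially a restatement of the theorem. Concretely, if $\Sigma\not\sim\Sigma'$ one obtains trenches $\tau[T_a]$ and $\tau[T_b]$ separated by $\Sigma\cup\Sigma'$, but to conclude that some arrow lies in one annihilator and not the other you must exhibit an actual arrow of $Q_B$ joining the two cells. The paper gets this from connectedness of the tilted algebra $C=B/\Ann_B\Sigma$: it chooses $T_a,T_b$ so that there are nonzero maps between the corresponding projective $C$-modules, deduces $\tau I(T_a,T_b)\cap\Sigma=\emptyset$ from Theorem~\ref{theorem.generate_annihilator}(b), and then observes that the map $\tau T_a\to\tau T_b$ must factor through $\Sigma'$, producing an element of $\Ann_B\Sigma'\setminus\Ann_B\Sigma$. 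Your proposal does not address where this arrow comes from. A smaller repairable point: the measure $|\Sigma\setminus\Sigma'|$ need not decrease under a legal move (the $\tau$-orbits in the cluster category are cyclic), so the induction should run on a total orbitwise $\tau$-distance with a chosen direction.
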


\begin{proof}
Assume that $\Ann_B \Sigma = \Ann_B \Sigma'$ and that $\Sigma \not\sim \Sigma'$. Two such local slices cut the cluster category into at least two separate parts, each containing part of the trenches. More precisely, there exist trenches $\tau[T_a]$ and $\tau[T_b]$ such that any map between them factors through $\Sigma \oplus \Sigma'$. Let $C = B / \Ann_B \Sigma$ be the tilted algebra corresponding to the local slice $\Sigma$. Since $C$ is connected, we may choose $T_a$ and $T_b$ as above in such a way that there are non-zero homomorphisms between the corresponding indecomposable projective modules of $C$. By Theorem~\ref{theorem.generate_annihilator}~(b) we have $\tau I(T_a, T_b) \cap \Sigma = \emptyset$. Then, by our choice of $[T_a]$ and $[T_b]$, the map $\tau T_a \to \tau T_b$ has to factor through $\Sigma'$. Hence we have found an element of $\Ann_B \Sigma' \setminus \Ann_B \Sigma$, contradicting the assumption.

Now assume that $\Sigma \sim \Sigma'$. Then we can move from one to
the other without passing through any trench. This means that they must
have the same trenches to the left and right, and thus kill the same
arrows from $Q_B$. Hence $\Ann_B \Sigma = \Ann_B \Sigma'$.
\end{proof}

This theorem shows that two local slices produce the same maximal tilted
subalgebra of $B$ if and only if both local slices belong to the same
homotopy class. Hence we have proved the following.

\begin{corollary}
 There is a bijection between the set of homotopy classes of local slices in
 $\mod B$ and the set of maximal tilted subalgebras of $B$.
\end{corollary}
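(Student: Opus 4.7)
The plan is to deduce this bijection directly from Theorem~\ref{theorem.localslices} and Theorem~\ref{theorem.homotopy}. First I would set up the map: define $\Phi$ from the set of local slices in $\mod B$ to the set of maximal tilted subalgebras of $B$ by $\Phi(\Sigma) = B/\Ann_B \Sigma$. Theorem~\ref{theorem.localslices} asserts precisely that $\Phi$ takes values in the set of maximal tilted subalgebras and that every maximal tilted subalgebra is hit, so $\Phi$ is surjective.

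Next I would use Theorem~\ref{theorem.homotopy}, which states that $\Ann_B \Sigma = \Ann_B \Sigma'$ if and only if $\Sigma \sim \Sigma'$. This is exactly the statement that $\Phi(\Sigma) = \Phi(\Sigma')$ if and only if $\Sigma$ and $\Sigma'$ belong to the same homotopy class. Therefore $\Phi$ descends to a well-defined map $\bar\Phi$ on the set of homotopy classes which is both injective and (by the previous paragraph) surjective, hence a bijection. There is essentially no obstacle here: the corollary is a packaging of the two preceding theorems, with Theorem~\ref{theorem.localslices} supplying surjectivity and well-definedness and Theorem~\ref{theorem.homotopy} supplying injectivity of the induced map on homotopy classes.
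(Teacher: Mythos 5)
Your proposal is correct and is essentially the paper's own argument: the corollary is stated immediately after Theorem~\ref{theorem.homotopy} with the one-line justification that two local slices produce the same maximal tilted subalgebra if and only if they are homotopic, combined with Theorem~\ref{theorem.localslices} for surjectivity. Your packaging via the map $\Phi(\Sigma)=B/\Ann_B\Sigma$ descending to a bijection on homotopy classes is exactly the intended reasoning.
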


We now want to be able to move from one equivalence class to the other
by ``jumping''  trenches. We will work in $\cC$ since in that category
the trenches are physically there. All our local slices will not
intersect $\add(\tau T)$ and thus will naturally descend to $\mod B$.

\begin{construction} \label{construction_jumping}
Let $X=\tau [T_a]$ be a trench in $\cC$. Define 
\begin{align*}
  L_X = 
& \ind(\tau X \oplus \vartheta X \oplus \vartheta^- \vartheta X)\setminus \ind(\add X) \\
  R_X =
& \ind(\tau^- X \oplus \vartheta^- X \oplus \vartheta \vartheta^-
X)\setminus \ind (\add X) .\\
\end{align*}
\end{construction}
We claim that there exist local slices $\Sigma, \Sigma'$ such that the
only trench between them is $X$. To see this, use $L_X$ and complete it
to a local slice (one can, for instance, use the same algorithm as in the proof of \cite[23]{ABS2}). Now use the same completion with $R_X$. This works
because $R_X$ and $L_X$ intersect at their end-points and their union
surrounds $X$. 

Then we can define the following operations on local slices. For a local slice $\Sigma$ with $L_X \subset \Sigma$ and a local slice $\Sigma'$ with $R_X \subset \Sigma'$ we set
\begin{align*}
J_X^- \Sigma = & (\Sigma \setminus L_X) \cup R_X \\
J_X^+ \Sigma' = & (\Sigma' \setminus R_X) \cup L_X .
\end{align*}
Note that $J_X^+ J_X^- \Sigma = \Sigma$ and $J_X^- J_X^+ \Sigma' =
\Sigma'$. Furthermore, if it is possible to apply $J_X^-$ or $J_X^+$ to two equivalent local slices, then the images will be equivalent again.

One can always choose a representative of each equivalence class of local slices, such that one can apply $J^\pm$.

Notice that $[T_a]$ is a relative source with respect to $\Sigma$ and a
relative sink with respect to $\Sigma'$. Thus $J^-$ transforms relative sources
into relative sinks and $J^+$ does the opposite. It is clear that with this procedure we run through all  the equivalence classes of local slices, and thus through all the maximal tilted subalgebras of $B$.
We now illustrate with an example.

\begin{example} Let $X=\tau [T_2] = \{ \tau T_2, \tau T_3\}$ be the trench as in Example~\ref{example.trenches}.
\[ \begin{tikzpicture}[scale=.8,yscale=-1]
 \fill [fill1] (0.5,0) -- (1,0.5) -- (0.5,1) -- (1,1.5) -- (1.5,1) -- (2,1.5) -- (4,3.5) -- (4.5,3) -- (1,-0.5) -- cycle;
 \fill [fill2] (4.5,0) -- (3,1.5) -- (3,2) -- (4.25,3.25) -- (4.5,3) -- (3.5,2) -- (4.5,1) -- (5,1.5)-- (5.5,1) -- (5,0.5) -- (5.5,0) -- (5,-0.5) -- cycle;
 \node (P31) at (0,1) [vertex] {};
 \node (P51) at (0,3) [tvertex] {$\tau T_5$};
 \node (P11) at (1,0) [tvertex] {$L_1$};
 \node (P21) at (1,1) [tvertex] {$L_2$};
 \node (P41) at (1,2) [tvertex] {$\tau T_4$};
 \node (P32) at (2,1) [tvertex] {$L_3$};
 \node (P52) at (2,3) [tvertex] {$T_5$};
 \node (P12) at (3,0) [tvertex] {$\tau T_2$};
 \node (P22) at (3,1) [tvertex] {$\tau T_3$};
 \node (P42) at (3,2) [tvertex] {$L_4=R_4$};
 \node (P33) at (4,1) [tvertex] {$R_3$};
 \node (P53) at (4,3) [vertex] {};
 \node (P13) at (5,0) [tvertex] {$R_1$};
 \node (P23) at (5,1) [tvertex] {$R_2$};
 \node (P43) at (5,2) [vertex] {};
 \node (P34) at (6,1) [vertex] {};
 \node (P54) at (6,3) [tvertex] {$\tau T_1$};
 \node (P14) at (7,0) [vertex] {};
 \node (P24) at (7,1) [vertex] {};
 \node (P44) at (7,2) [vertex] {};
 \node (P35) at (8,1) [vertex] {};
 \node (P55) at (8,3) [tvertex] {$T_1$};
 \node (P15) at (9,0) [vertex] {};
 \node (P25) at (9,1) [vertex] {};
 \node (P45) at (9,2) [vertex] {};
 \node (P36) at (10,1) [vertex] {};
 \node (P56) at (10,3) [tvertex] {$\tau T_5$};
 \draw [dashed] (0,-.5) -- (P51);
 \draw [dashed] (P51) -- (0,3.5);
 \draw [dashed] (10,-.5) -- (P56);
 \draw [dashed] (P56) -- (10,3.5);
 \draw [->] (P31) -- (P11);
 \draw [->] (P31) -- (P21);
 \draw [->] (P31) -- (P41);
 \draw [->] (P51) -- (P41);
 \draw [->] (P11) -- (P32);
 \draw [->] (P21) -- (P32);
 \draw [->] (P41) -- (P32);
 \draw [->] (P41) -- (P52);
 \draw [->] (P32) -- (P12);
 \draw [->] (P32) -- (P22);
 \draw [->] (P32) -- (P42);
 \draw [->] (P52) -- (P42);
 \draw [->] (P12) -- (P33);
 \draw [->] (P22) -- (P33);
 \draw [->] (P42) -- (P33);
 \draw [->] (P42) -- (P53);
 \draw [->] (P33) -- (P13);
 \draw [->] (P33) -- (P23);
 \draw [->] (P33) -- (P43);
 \draw [->] (P53) -- (P43);
 \draw [->] (P13) -- (P34);
 \draw [->] (P23) -- (P34);
 \draw [->] (P43) -- (P34);
 \draw [->] (P43) -- (P54);
 \draw [->] (P34) -- (P14);
 \draw [->] (P34) -- (P24);
 \draw [->] (P34) -- (P44);
 \draw [->] (P54) -- (P44);
 \draw [->] (P14) -- (P35);
 \draw [->] (P24) -- (P35);
 \draw [->] (P44) -- (P35);
 \draw [->] (P44) -- (P55);
 \draw [->] (P35) -- (P15);
 \draw [->] (P35) -- (P25);
 \draw [->] (P35) -- (P45);
 \draw [->] (P55) -- (P45);
 \draw [->] (P15) -- (P36);
 \draw [->] (P25) -- (P36);
 \draw [->] (P45) -- (P36);
 \draw [->] (P45) -- (P56);
\end{tikzpicture} \]
Here $L_X = \oplus_{i=1}^4 L_i$ and $R_X = \oplus_{i=1}^4 R_i$. Let $\Sigma$ be the completion of $L_X$ to a local slice shown in light grey and $\Sigma'$ the completion of $R_X$ to a local slice shown in dark grey. Note that the completions are not unique. We have that $J_X^- \Sigma = \Sigma'$ and $J_X^+ \Sigma' = \Sigma$. The trench $X$ is a relative source with respect to  $\Sigma$ and a relative sink with respect to $\Sigma'$. Let $C = B / \Ann_B\Sigma$ and $C' = B / \Ann_B \Sigma'$. 
\[ \begin{tikzpicture}[scale=.7,yscale=-1]
 \node (B1) at (3,1) [inner sep=1pt] {1};
 \node (B2) at (2,0) [inner sep=1pt] {2};
 \node (B3) at (2,2) [inner sep=1pt] {3};
 \node (B4) at (1,1) [inner sep=1pt] {4};
 \node (B5) at (0,2) [inner sep=1pt] {5};
 \node (B) at (0,0) {$C \colon$};
 \node (C1) at (8,1) [inner sep=1pt] {1};
 \node (C2) at (7,0) [inner sep=1pt] {2};
 \node (C3) at (7,2) [inner sep=1pt] {3};
 \node (C4) at (6,1) [inner sep=1pt] {4};
 \node (C5) at (5,2) [inner sep=1pt] {5};
 \node (C) at (5,0) {$C'\colon$};
 \draw [->] (B4) -- (B5);
 \draw [->] (B4) -- (B1);
 \draw [->] (B1) -- (B2);
 \draw [->] (B1) -- (B3);
 \draw [dashed] (B2) -- (B4);
 \draw [dashed] (B3) -- (B4);
 \draw [->] (C4) -- (C5);
 \draw [->] (C4) -- (C1);
 \draw [dashed] (C1) -- (C2);
 \draw [dashed] (C1) -- (C3);
 \draw [->] (C2) -- (C4);
 \draw [->] (C3) -- (C4);
\end{tikzpicture} \]
Observe that these operations amount to exchanging the relations ending at the cell corresponding to the trench we jumped with arrows coming out of the cell, and the arrows coming in with relations. 
\end{example}

\section{Generalized 2-APR tilting} \label{section.2apr}

In this section we recall and generalize 2-APR tilting, which was originally introduced in \cite{IO}. We then show that ``jumping trenches'', as introduced in Section~\ref{section.Loc_slices}, is a special case of this generalized $2$-APR tilting. Finally we give an explicit description of the quiver and relations of the 2-APR tilted algebra in terms of the original algebra.

APR tilting has been introduced by Auslander, Platzeck and Reiten in \cite{APR}:

Assume $C$ is a basic algebra, and $C = C_0 \oplus C_R$ where $C_0$ is a simple projective $C$-module. Then $T = \tau^- C_0 \oplus C_R$ is a tilting module. If moreover the injective dimension $\id C_0 = 1$, then the quiver of $\End_C(T)^{\op}$ is obtained from the quiver of $C$ by reversing all arrows ending in the vertex corresponding to $C_0$.

The procedure of APR tilting was generalized in \cite{IO}. Here we are mostly interested in what is called 2-APR tilting in that paper. Instead of replacing $C_0$ by $\tau^- C_0$ it is replaced by the complex $\tau^- C_0[1] = F C_0$ (called $\tau_2^- C_0$ in that paper). Then, provided certain conditions are satisfied, the quiver with relations of the algebra $\End_C(\tau^- C_0[1] \oplus C_R)^{\op}$ can be read off directly from the quiver with relations of the algebra $C$ (see Proposition~\ref{proposition.apr-quiver} below).

Here we generalize that construction in two ways:

First, we use the replacement $F C_0$ (constructed in the derived category) instead of the construction of $\tau_2^-$ in the module category in \cite{IO}, and allow the result to be a proper complex.

Second, we do not require $C_0$ to be simple. In fact we will wish to apply the procedure to all indecomposable summands in one cell at once.

\begin{definition} \label{definition.n-APR}
Let $C$ be an algebra of global dimension two. Assume $C = C_0 \oplus C_R$ with $\Hom_{C}(C_R, C_0) = 0$ and $\Ext^1_{C}(\nu C_R, C_0) = 0$. Then we call $T := F C_0 \oplus C_R$ the 2-APR tilting complex associated to $C_0$, and $C' = \End_{D^b(\mod C)} (T)^{\op}$ the generalized 2-APR tilt of $C$ at $C_0$.
\end{definition}

For the application in this paper $C$ will be tilted, but it is not necessary to assume it to be tilted at this moment.

This definition is justified by the following fact:

\begin{lemma} \label{lemma.istilting}
In the situation of Definition~\ref{definition.n-APR} the complex $T$ is a tilting complex in $D^b(\mod C)$.
\end{lemma}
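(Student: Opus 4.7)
Plan: I would verify the two conditions defining a tilting complex in $D^b(\mod C)=K^b(\proj C)$: (i) $\Hom(T,T[n])=0$ for all $n\neq 0$, and (ii) $\thick(T)=K^b(\proj C)$. Throughout I will freely use that since $\gldim C=2$ the functor $\nu$ is an autoequivalence of $D^b(\mod C)$ (a Serre functor), so in particular Serre duality $\Hom(X,Y)\cong D\Hom(Y,\nu X)$ is available.

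For (i), I would split $\Hom(T,T[n])$ into the four summand combinations coming from $T=FC_0\oplus C_R$. The two diagonal pieces are immediate: $\Hom(C_R,C_R[n])=\Ext^n(C_R,C_R)=0$ for $n\neq 0$ since $C_R$ is projective, and $\Hom(FC_0,FC_0[n])=\Hom(C_0,C_0[n])=\Ext^n(C_0,C_0)=0$ for $n\neq 0$ by the same reasoning after cancelling the autoequivalence $F=\nu^{-1}[2]$. For the off-diagonal pieces I would apply $\nu$ to both arguments to rewrite
\[
\Hom(FC_0,C_R[n])\cong\Ext^{n-2}(C_0,\nu C_R),\qquad \Hom(C_R,FC_0[n])\cong\Ext^{n+2}(\nu C_R,C_0).
\]
For the first of these, Serre duality converts it into $D\Ext^{2-n}(C_R,C_0)$; the hypothesis $\Hom_C(C_R,C_0)=0$ handles $n=2$, projectivity of $C_R$ handles the other positive values of $2-n$, and the bound $\gldim C=2$ together with vanishing of Ext in negative degrees handles all remaining $n\neq 0$. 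For the second, the bound $\gldim C=2$ kills $\Ext^{n+2}$ for $n\geq 1$ and for $n\leq -3$, the hypothesis $\Ext^1_C(\nu C_R,C_0)=0$ handles $n=-1$, and the remaining case is disposed of using Serre duality in conjunction with the hypothesis $\Hom_C(C_R,C_0)=0$.

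For (ii), it suffices to show $C\in\thick(T)$, since $\thick(C)=K^b(\proj C)$. The summand $C_R$ lies in $T$, so only $C_0\in\thick(T)$ remains. Here I would use a projective resolution $P^{\bullet}\to DC$ of length at most two (available because $\gldim C=2$). Applying the autoequivalence $\nu^{-1}$ expresses $C=\nu^{-1}DC$ as a bounded iterated extension of shifts of summands of $\nu^{-1}C=\nu^{-1}C_0\oplus\nu^{-1}C_R$; since $\nu^{-1}C_0=FC_0[-2]$ lies in $\thick(T)$ by definition of $T$ and $\nu^{-1}C_R$ can be reached by iterating the same argument (using that $C_R$ itself is a summand of $T$ and the resolution stays within $\thick(C_R,FC_0)$), we obtain $C\in\thick(T)$.

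The main obstacle is the careful bookkeeping of shifts in the off-diagonal Hom computations, together with keeping track of which cases the two hypotheses cover; the condition $\Ext^1_C(\nu C_R,C_0)=0$ plays the role of the non-injectivity condition of the original $2$-APR tilting of \cite{IO}, and once it is combined with Serre duality and the global-dimension bound all the required vanishings fall out. The generation step (ii) is then essentially formal from the finite projective dimension of $DC$.
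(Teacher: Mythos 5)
Your part~(i) follows the paper's computation for three of the four Hom-blocks, but the one case you dismiss with ``Serre duality in conjunction with $\Hom_C(C_R,C_0)=0$'' --- namely $\Hom(C_R,FC_0[-2])=\Hom(\nu C_R,C_0)$ --- is exactly the case where a purely formal argument fails. Serre duality gives $\Hom(\nu C_R,C_0)\cong D\Hom(C_0,\nu^2C_R)$, not $D\Hom(C_R,C_0)$, and the implication ``$\Hom(P,Q)=0\Rightarrow\Hom(\nu P,Q)=0$'' is false for individual projectives over an algebra of global dimension two: for the linearly oriented $A_3$ with its zero relation, the simple projective $P$ and the projective $Q$ at the opposite end vertex satisfy $\Hom(P,Q)=0$ but $\Hom(\nu P,Q)\neq 0$. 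What rescues the lemma is that $C_0\oplus C_R$ is \emph{all} of $C$: the hypothesis $\Hom_{C}(C_R,C_0)=0$ then says there are no arrows from the $C_0$-vertices to the $C_R$-vertices, so every composition factor of $C_0$ lies in $\add(C_0/\rad C_0)$, every composition factor of $\nu C_R$ lies in $\add(C_R/\rad C_R)$, and $\Hom(\nu C_R,C_0)=0$ because the supports are disjoint (the same argument gives $\Hom(C_0,\nu C_R)=0$). This disjoint-support statement is the key auxiliary fact in the paper's proof and is the missing ingredient in yours.

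The generation step~(ii) also has a gap. You correctly get $C\in\operatorname{thick}(\nu^{-1}C_0\oplus\nu^{-1}C_R)$ from a length-two projective resolution of $DC$, and $\nu^{-1}C_0=FC_0[-2]$ is indeed in $\operatorname{thick}(T)$; but the claim that $\nu^{-1}C_R\in\operatorname{thick}(FC_0\oplus C_R)$ ``by iterating the same argument'' is unsubstantiated. Applying $\nu$, it amounts to $C_R\in\operatorname{thick}(C_0\oplus\nu C_R)$, and the minimal projective resolution of $\nu C_R$ need \emph{not} have all terms in $\add C_R$ (its syzygies pick up $C_0$-vertices whenever $\Hom_{C}(C_0,C_R)\neq 0$), so ``the resolution stays within $\operatorname{thick}(C_R,FC_0)$'' is precisely what would have to be proved. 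The paper sidesteps this entirely: it shows that any $X$ with $\Hom(T,X[i])=0$ for all $i$ must vanish --- orthogonality to $C_R$ forces $X$ to be a complex with terms in $\add C_0$, orthogonality to $FC_0$ forces $\nu X$ to be a complex with terms in $\add\nu C_R$, and then $\Hom(X,X)=D\Hom(X,\nu X)=0$ by the same disjoint-support statement. You should either adopt that route or actually prove the containment your version requires.
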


\begin{proof}
We start by showing
\begin{equation} \label{equation.nohoms}
\Hom_C(C_0, \nu C_R) = 0 = \Hom_C( \nu C_R, C_0).
\end{equation}
Since $\Hom_{C}(C_R, C_0) = 0$ there are no arrows in the quiver of $C$ from vertices corresponding to $C_0$ to vertices corresponding to $C_R$. Hence all composition factors of $C_0$ are in $\add (C_0 / \rad C_0)$, and all composition factors of $\nu C_R$ are in $\add (C_R / \rad C_R)$. In particular $C_0$ and $\nu C_R$ have no common composition factors. This implies that (\ref{equation.nohoms}) holds.

Next we show that $T$ generates the derived category. Let $X \in D^b(\mod C)$ such that $\Hom_{D^b(\mod C)}(T, X[i]) = 0$ for all $ i \in \mathbb{Z}$. Then in particular $\Hom_{D^b(\mod C)}(C_R, X[i]) = 0$ for all $ i \in \mathbb{Z}$, and hence all composition factors of all homologies of $X$ are in $\add (C_0 / \rad C_0)$. Therefore $X$ is isomorphic to a complex with terms in $\add C_0$. On the other hand
\[ \Hom_{D^b(\mod C)}(C_0, \nu X[i]) = \Hom_{D^b(\mod C)}(F C_0, X[i+2]) = 0 \, \forall i \in \mathbb{Z}. \]
Hence $\nu X$ is isomorphic to a complex with terms in $\add \nu C_R$. Now
\[ \Hom_{D^b(\mod C)}(X,X) = D\Hom_{D^b(\mod C)}(X,\nu X) = 0, \]
and hence $X = 0$. So $T$ generates $D^b(\mod C)$.

It remains to see that $\Hom_{D^b(\mod C)}(T, T[i]) = 0$ for all $ i \neq 0$. Since $C_0$ and $C_R$ are projective modules we have
\[ \Hom_{D^b(\mod C)}(C_0, C_0[i]) = 0 = \Hom_{D^b(\mod C)}(C_R, C_R[i]) = 0 \qquad \forall i \neq 0, \]
and since $F$ is an autoequivalence of $D^b(\mod C)$ also
\[ \Hom_{D^b(\mod C)}(F C_0, F C_0[i]) = 0 \qquad \forall i \neq 0. \]
Next we see that
\begin{align*}
\Hom_{D^b(\mod C)}(F C_0, C_R[i]) & = \Hom_{D^b(\mod C)}(C_0, \nu C_R[i-2]) \\
& = D \Hom_{D^b(\mod C)}(C_R[i-2], C_0) \\
& = 0 \qquad \forall i.
\end{align*}
Finally we have
\[ \Hom_{D^b(\mod C)}(C_R, F C_0[i]) = \Hom_{D^b(\mod C)}(\nu C_R, C_0[i+2]). \]
Since $\gldim C = 2$, this vanishes for all $i \not\in \{-2, -1, 0\}$. For $i = -1$ it vanishes by assumption, and for $i = -2$ we have $\Hom_{D^b(\mod C)}(\nu C_R, C_0) = 0$ by (\ref{equation.nohoms}).
\end{proof}

\begin{remark}
Note that
\[ \Hom_{D^b(\mod C)}(C_R, F C_0) = \Ext^2_{C}(\nu C_R, C_0). \]
\end{remark}

The following lemma shows that jumping trenches (as introduced in Construction~\ref{construction_jumping}), or more generally passing from one local slice to another, are special cases of 2-APR tilting.

\begin{lemma} \label{lemma.slicegoodforapr}
Let $C$ be an iterated tilted algebra with $\gldim C = 2$. We decompose $C = \oplus_{a} C_a$ with $C_a$ indecomposable. Let $\Sigma \subseteq D^b(\mod C)$ be a complete slice which does not contain any of the $C_a$. Then for
\[ C_0 = \bigoplus_{\substack{\{a \mid \exists \text{ path} \\ C_a \leadsto \Sigma\}}} C_a \qquad \text{ and } \qquad C_R = \bigoplus_{\substack{\{a \mid \exists \text{ path } \\ \Sigma \leadsto C_a\}}} C_a \]
the assumptions of Definition~\ref{definition.n-APR} are satisfied. That is, we have $\Hom_{C}(C_R, C_0) = 0 = \Ext^1_{C}(\nu C_R, C_0)$.
\end{lemma}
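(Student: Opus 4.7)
The plan is to transport the statement to the derived category of a hereditary algebra via Happel's theorem, introduce a $\tau$-distance from $\Sigma$, and derive both vanishings from the monotonicity of this distance along paths in the AR-quiver. Since $C$ is iterated tilted of global dimension two, Happel's theorem yields a triangle equivalence $D^b(\mod C) \simeq D^b(\mod H)$ for some connected hereditary algebra $H$, under which $\Sigma$ corresponds to a complete slice; this is a section of the transjective component of $\Gamma_{D^b}$, which has the form $\mathbb{Z}\Delta$ with $\Delta$ the underlying graph of $H$. For every indecomposable $M$ in this component there is a unique $n(M) \in \mathbb{Z}$ such that $\tau^{n(M)}M \in \Sigma$; one then has $n(Y) = 0$ for $Y \in \Sigma$ and $n(\tau M) = n(M) + 1$. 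A standard property of $\mathbb{Z}\Delta$ is that for every arrow $M \to N$ of $\Gamma_{D^b}$ in this component, $n(M) - n(N) \in \{0, 1\}$; hence $n$ is non-increasing along every path in the AR-quiver.

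The indecomposable summands of $C$, viewed as summands of a tilting complex in $D^b(\mod H)$, all lie in the transjective component, so $n$ is defined on them. By the defining paths, every indecomposable summand $C_a$ of $C_0$ satisfies $n(C_a) \geq 1$, and every indecomposable summand $C_b$ of $C_R$ satisfies $n(C_b) \leq -1$; in particular $C_0$ and $C_R$ share no summand. For the first vanishing, suppose $f \from C_b \to C_a$ is a non-zero morphism with $C_b$ a summand of $C_R$ and $C_a$ of $C_0$. Since $C_b \neq C_a$, the map $f$ is in the radical and decomposes as a composition of irreducible maps, producing a path $C_b \leadsto C_a$ in $\Gamma_{D^b}$; monotonicity gives $n(C_b) \geq n(C_a)$, contradicting $n(C_b) \leq -1 < 1 \leq n(C_a)$.

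For the second vanishing, use the identity $\nu = \tau[1]$ in $D^b(\mod C)$ to rewrite
\[
\Ext^1_C(\nu C_R, C_0) = \Hom_{D^b(\mod C)}(\nu C_R, C_0[1]) = \Hom_{D^b(\mod C)}(\tau C_R, C_0).
\]
A non-zero morphism $\tau C_b \to C_a$ between indecomposable summands again yields a path $\tau C_b \leadsto C_a$ in $\Gamma_{D^b}$, hence $n(\tau C_b) \geq n(C_a)$; but $n(\tau C_b) = n(C_b) + 1 \leq 0$ and $n(C_a) \geq 1$, a contradiction. I expect the main obstacle to be the setup---verifying that each $C_a$ lies in the transjective component so that the $\tau$-coordinate $n$ is well-defined, and confirming the monotonicity of $n$ along paths in the AR-quiver; once these are in hand, both vanishings reduce to the same short arithmetic argument.
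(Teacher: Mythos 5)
Your proof is correct and is essentially a careful expansion of the paper's own one-line argument: the paper disposes of $\Hom_{C}(C_R,C_0)=0$ ``by construction'' and of the second vanishing via $\nu C_R=\tau C_R[1]$, which is exactly your reduction to $\Hom_{D^b(\mod C)}(\tau C_R,C_0)$; your $\tau$-coordinate $n$ is simply a precise way of saying that a section separates its predecessors from its successors. Two small points of hygiene. First, you do not need (and in the representation-infinite case cannot in general have) that \emph{all} summands $C_a$ lie in the transjective component containing $\Sigma$: only the summands of $C_0$ and $C_R$ enter the conclusion, and for those the defining paths $C_a\leadsto\Sigma$ and $\Sigma\leadsto C_a$ already place them in $\Sigma$'s component, since paths in the AR-quiver do not leave a component -- so the ``main obstacle'' you flag dissolves once you use the hypothesis rather than the stronger claim. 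Second, the step ``a non-zero radical map between indecomposables yields a path in $\Gamma_{D^b}$'' is immediate only in the representation-finite case; in general it requires that the transjective component is standard (no maps in $\rad^\infty$ between its objects), which is true but deserves a word or a reference. Neither point invalidates the argument, and the paper's own proof is no more explicit about them.
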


\begin{proof}
The first claim holds by construction, the second follows immediately from the fact that $\nu C_R = \tau C_R[1]$.
\end{proof}

Assume now that $C$ is tilted. Then $C$ is obtained from the corresponding cluster-tilted algebra $B = C \ltimes \Ext^2_{C}(DC, C)$ by factoring out the arrows in some admissible set $S$.

The next proposition explicitly gives us the quiver of any generalized 2-APR tilt of $C$.

\begin{proposition} \label{proposition.apr-quiver}
Let $C = B / (S)$ be tilted, where $B$ is the corresponding cluster-tilted algebra, and $S$ is an admissible set. Assume $C_0 \leq_{\oplus} C$ admits a 2-APR tilting complex. Then $\End_{D^b(\mod C)}(F C_0 \oplus C_R)^{\op}$ is isomorphic to $C' = B / (S')$, where
\[ S' = S \setminus \{\text{all arrows from $C_0$ to $C_R$}\} \cup \{\text{all arrows from $C_R$ to $C_0$}\}. \]
(Here ``all arrows'' refers to all arrows in the quiver of $B$.)
\end{proposition}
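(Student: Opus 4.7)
My plan is to break the statement into two parts: first, to verify that $C'$ is again a maximal tilted subalgebra of the same cluster-tilted algebra $B$, so that $C' = B/(S')$ for a unique admissible set $S'$; and second, to identify $S'$ explicitly by computing the quiver of $C'$ from $\Hom$-spaces in $D^b(\mod C)$.

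For the first part, since $C$ is tilted we have $C = \End_H(T')^{\op}$ for some hereditary algebra $H$ and tilting module $T'$, and a triangle equivalence $D^b(\mod C) \simeq D^b(\mod H)$ that intertwines the autoequivalence $F = \nu^{-1}[2] = \tau^{-1}[1]$. Passing to the orbit $\cC_H = D^b(\mod H)/F$, the images of the two tilting complexes $C_0 \oplus C_R$ and $FC_0 \oplus C_R$ are identified, because $FX$ and $X$ become isomorphic in $\cC_H$. Hence $C$ and $C'$ determine the same cluster-tilting object $T$, and by Theorem~\ref{thm.cta_vs_rel_extension} the same relation extension $B$. Thus $C'$ is a maximal tilted subalgebra of $B$, and $C' = B/(S')$ for some admissible set $S'$.

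For the second part, following the convention of the paper that an arrow $i \to j$ in the quiver of an opposite-endomorphism algebra corresponds to an irreducible morphism $T'_j \to T'_i$, I would compute the four types of $\Hom$-spaces between the summands of $T' = FC_0 \oplus C_R$ in $D^b(\mod C)$. Within the $C_R$-part these are simply the $\Hom_C$-spaces, and within the $C_0$-part the autoequivalence $F$ reduces them to $\Hom_C$-spaces as well; so the arrows of $Q_{C'}$ inside each block agree with those of $Q_C$. For the mixed spaces, Serre duality combined with $F = \nu^{-1}[2]$ gives
\[
 \Hom_D(FC_0, C_R) \cong D\Ext^2_C(C_R, C_0) = 0
\]
(the vanishing coming from projectivity of $C_R$) and directly
\[
 \Hom_D(C_R, FC_0) \cong \Ext^2_C(\nu C_R, C_0),
\]
the latter being precisely the $(C_R, C_0)$-component of the $C$-$C$-bimodule $\Ext^2_C(DC, C)$ that realises the relation extension $B = C \ltimes \Ext^2_C(DC, C)$. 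Translating back, $Q_{C'}$ has no arrows from $C_R$-vertices to $C_0$-vertices, while its arrows from $C_0$-vertices to $C_R$-vertices are exactly the arrows of $Q_B$ in that direction---and, since $\Hom_C(C_R, C_0) = 0$ forces all such arrows of $Q_B$ to originate from $S$, they were already in $S$. Comparing with $Q_C$, the set $S'$ that cuts $B$ down to $C'$ must therefore coincide with $S$ on arrows internal to $C_0$ or to $C_R$, must drop all arrows of $S$ of the form $C_0 \to C_R$, and must additionally contain every arrow $C_R \to C_0$ of $Q_B$. This is precisely the claimed formula.

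The main obstacle I anticipate is getting the directions right: the $(-)^{\op}$ in the definitions of $B$ and $C'$ reverses arrows relative to morphisms, and it is this feature which causes the ``swap'' to exchange arrows $C_0 \to C_R$ with arrows $C_R \to C_0$ rather than the reverse. A secondary point, which is already guaranteed by the first part but worth confirming combinatorially, is that the resulting $S'$ is really admissible---each full cycle of $Q_B$ contains exactly one arrow of $S'$---which follows because, as in the proof of Theorem~\ref{theorem.generate_annihilator}, full cycles in a cluster-tilted algebra come from a potential and any such cycle crossing between $C_0$ and $C_R$ has a unique arrow in each direction across the cut.
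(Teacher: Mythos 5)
Your second half is essentially the paper's proof: the authors also compute the four blocks of $\Hom$-spaces between the summands of $F C_0 \oplus C_R$, finding $\Hom_C$ inside each block, $\Hom_{D^b(\mod C)}(FC_0,C_R)=0$ by Serre duality, and $\Hom_{D^b(\mod C)}(C_R,FC_0)=\Ext^2_C(\nu C_R,C_0)=\Hom_B(C_R,C_0)$, and then match these against $B/(S')$. Your care with the $(-)^{\op}$ convention and your admissibility check for $S'$ (each full cycle meeting both blocks crosses exactly once in each direction, because every $C_0\to C_R$ arrow of $Q_B$ already lies in $S$ and $S$ is admissible) are both sound.

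However, your first part contains a genuine error. You claim that $C'=\End_{D^b(\mod C)}(FC_0\oplus C_R)^{\op}$ is again a \emph{maximal tilted subalgebra} of $B$, invoking Theorem~\ref{thm.cta_vs_rel_extension}; but a generalized $2$-APR tilt of a tilted algebra need not be tilted. The paper's own example at the end of Section~\ref{section_algorithm} exhibits a tilted $C_1$ of type $A_5$ and a summand satisfying the hypotheses of Definition~\ref{definition.n-APR} whose $2$-APR tilt $C_2$ is iterated tilted but \emph{not} tilted, so neither Theorem~\ref{thm.cta_vs_rel_extension} nor Theorem~\ref{theorem.localslices} applies to $C'$. (What \emph{is} true is that the images of $C_0\oplus C_R$ and $FC_0\oplus C_R$ in $\cC$ coincide, so $\End_{D^b(\mod C)}(FC_0\oplus C_R)^{\op}$ embeds in $B$ as a subalgebra; but that alone does not make it a quotient of $B$ by an ideal generated by arrows.) This matters because your second part leans on the first: you argue that "the set $S'$ that cuts $B$ down to $C'$ must coincide with \dots", which presupposes that such an arrow set exists. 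The repair is exactly what the paper does: take the explicitly defined $S'$ and verify directly, block by block, that $\Hom_{D^b(\mod C)}(\widetilde{C}_a,\widetilde{C}_b)=\Hom_{B/(S')}(C'_a,C'_b)$ — a comparison of full $\Hom$-spaces (hence of relations, not only of arrows) which your computations already essentially supply, making the first part unnecessary.
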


\begin{proof}
We denote the indecomposable projective modules over $C$ and $C'$ with simple top corresponding to vertex $a$ by $C_a$ and $C'_a$, respectively. Moreover we write
\[ \widetilde{C}_a = \left\{ \begin{matrix} F C_a & \text{ if } C_a \in \add C_0 \\ C_a & \text{ if } C_a \in \add C_R. \end{matrix} \right. \]
Then we have to show that $\Hom_{D^b(\mod C)} (\widetilde{C}_a, \widetilde{C}_b) = \Hom_{C'}( C'_a, C'_b)$ for any $a$ and $b$.

By construction (see the proof of \ref{lemma.istilting}) the morphisms inside $C_0$ and the morphisms inside $C_R$ are not affected by the tilt (and neither by our change of admissible set), so the claim holds if either both or none of $C_a$ and $C_b$ are in $\add C_0$. Moreover we have seen that $\Hom_{D^b(\mod C)}(F C_0, C_R) = 0$, and since all arrows from $C_R$ to $C_0$ are contained in $S'$ we have $\Hom_{C'}( C'_a, C'_b) = 0$ if $C_a \in \add C_0$ and $C_b \in \add C_R$. Finally for $C_a \in \add C_R$ and $C_b \in \add C_0$ we have
\begin{align*}
\Hom_{D^b(\mod C)} (\widetilde{C}_a, \widetilde{C}_b) & = \Ext^2( \nu C_a, C_b) \\
& = \Hom_{B}(C_a, C_b) \\
& = \Hom_{C'}(C'_a, C'_b)
\end{align*}
as claimed.
\end{proof}

\begin{remark}
Proposition~\ref{proposition.apr-quiver} holds more generally for any finite dimensional algebra $C$ of global dimension 2. In that case one uses $B = T_{C} \Ext^2(DC, C)$, the tensor algebra of $\Ext^2(DC, C)$ over $C$. This algebra is the endomorphism ring of the image of $B$ in the cluster category of $B$, as defined by Amiot (see \cite{CC, C_PhD}).
\end{remark}

\begin{example}
Let us now look at what the construction of Lemma~\ref{lemma.slicegoodforapr} and Proposition~\ref{proposition.apr-quiver} does in the setup of the Example~\ref{example.trenches}. Recall that $C$ was given by the quiver with relations
\[ \begin{tikzpicture}[scale=.7,yscale=-1]
 \node (P1) at (3,1) [inner sep=1pt] {1};
 \node (P2) at (2,0) [inner sep=1pt] {2};
 \node (P3) at (2,2) [inner sep=1pt] {3};
 \node (P4) at (1,1) [inner sep=1pt] {4};
 \node (P5) at (0,2) [inner sep=1pt] {5};
 \draw [->] (P4) -- (P5);
 \draw [->] (P4) -- (P1);
 \draw [->] (P1) -- (P2);
 \draw [->] (P1) -- (P3);
 \draw [dashed] (P2) -- (P4);
 \draw [dashed] (P3) -- (P4);
\end{tikzpicture} \]
and the Auslander-Reiten quiver of its derived category looks as follows (continuing infinitely in both directions):
\[ \begin{tikzpicture}[scale=.8,yscale=-1]
 %grey area
 \fill [fill1] (3,.5) -- (4.5,2) -- (2,4.5) -- (1.5,4) -- (3,2.5) -- (2.5,2) -- (3,1.5) -- (2.5,1) -- cycle;
 %vertices
 \foreach \x in {0,...,6}
  \foreach \y in {2,4}
   \node (\y-\x-e) at (\x*2,\y) [vertex] {};
 \foreach \x in {0,...,5}
  \foreach \y in {1,2,3}
   \node (\y-\x-o) at (\x*2+1,\y) [vertex] {};
 \replacevertex{(1-0-o)}{[tvertex] {$C_2$}}
 \replacevertex{(2-0-o)}{[tvertex] {$C_3$}}
 \replacevertex{(4-2-e)}{[tvertex] {$C_1$}}
 \replacevertex{(4-4-e)}{[tvertex] {$C_5$}}
 \replacevertex{(3-4-o)}{[tvertex] {$C_4$}}
 \replacevertex{(1-5-o)}{[tvertex] {$FC_3$}}
 \replacevertex{(2-5-o)}{[tvertex] {$FC_2$}}
 %draw arrows
 \foreach \xa/\xb in {0/1,1/2,2/3,3/4,4/5,5/6}
  \foreach \ya/\yb in {2/1,2/2,2/3,4/3}
   {
    \draw [->] (\ya-\xa-e) -- (\yb-\xa-o);
    \draw [->] (\yb-\xa-o) -- (\ya-\xb-e);
   }
 %dots
 \foreach \y in {1,...,4}
  {
   \node at (-1, \y) {$\cdots$};
   \node at (13, \y) {$\cdots$};
  }
\end{tikzpicture} \]
We choose a complete slice not containing any of the $C_a$ as indicated by the grey area above. Then, in the construction of Lemma~\ref{lemma.slicegoodforapr} we obtain $C_0 = C_2 \oplus C_3$ and $C_R = C_1 \oplus C_4 \oplus C_5$. Now the quiver with relations of $\End_{D^b(\mod C)}(F C_0 \oplus C_R)^{\op}$ is
\[ \begin{tikzpicture}[scale=.7,yscale=-1]
 \node (P1) at (3,1) [inner sep=1pt] {1};
 \node (P2) at (2,0) [inner sep=1pt] {2};
 \node (P3) at (2,2) [inner sep=1pt] {3};
 \node (P4) at (1,1) [inner sep=1pt] {4};
 \node (P5) at (0,2) [inner sep=1pt] {5};
 \draw [->] (P4) -- (P5);
 \draw [->] (P4) -- (P1);
 \draw [dashed] (P1) -- (P2);
 \draw [dashed] (P1) -- (P3);
 \draw [->] (P2) -- (P4);
 \draw [->] (P3) -- (P4);
\end{tikzpicture} \]
This follows from Proposition~\ref{proposition.apr-quiver}. It can also be verified by looking directly at the Auslander-Reiten quiver above.
\end{example}

\section{The algorithm} \label{section_algorithm}

In this section we put together the techniques developed in Sections~\ref{section.Loc_slices} and \ref{section.2apr} to obtain an algorithm that, given a cluster-tilted algebra of finite type $B$, produces all maximal tilted subalgebras.

For the rest of the section let $B$ be the input to our algorithm, that is some fixed cluster-tilted algebra of Dynkin type.

\begin{step} \label{step.location}
Determine the distribution of the indecomposable direct summands of a cluster-tilting object $T$ in a cluster category $\mathcal{C}$ with $\End_{\cC}(T)^{\op} = B$.
\end{step}

\begin{remark}
We refer the reader to \cite{BOW} for a technique to find the distribution of a cluster-tilting object in the AR-quiver of the cluster category.
\end{remark}

\begin{step}
Determine which indecomposable direct summands of $T$ lie in the same cell.
\end{step}

This can be done by directly applying the definition of the equivalence relation $\equiv$ (see Definition~\ref{definition.celldecomp}).

\begin{step}
Choose a local slice $\Sigma$ such that $\tau^- \Sigma \cap \add T = 0$.
\end{step}

\begin{step}
Determine a tilted admissible set $S$ such that $B / \Ann_B \Sigma = B / (S)$. Call this tilted algebra $C$.
\end{step}

We can read off the tilted admissible set $S$ from the AR-quiver of $\mathcal{C}$ as follows: $S$ consists of arrows $b \to a$ in the quiver $Q_B$ of $B$, such that $\tau I (T_a, T_b) \cap \Sigma \neq \emptyset$ (see Theorem~\ref{theorem.generate_annihilator}~(b)).

\begin{step} \label{step.r}
Move $\Sigma$ as far to the right as possible within its homotopy class.
\end{step}

By Theorem~\ref{theorem.homotopy} this step does not change the tilted algebra $C$, and hence neither the tilted admissible set $S$.

\begin{step} \label{step.choosejump}
For any cell $\widetilde{T}$ which is a relative source with respect to $\Sigma$ and such that $\widetilde{T} \in \tau^{-2} \Sigma$, jump the trench $\tau \widetilde{T}$ as in Construction~\ref{construction_jumping}. We call the local slice obtained in this way $\Sigma_{\widetilde{T}}$.
\end{step}

By Proposition~\ref{proposition.apr-quiver} this amounts to the following:
\begin{itemize}
\item Removing all arrows $i \to j$, where $T_i$ is in the cell $\widetilde{T}$ and $T_j$ in some other cell, from the set $S$.
\item Adding all arrows $i \to j$ in $Q_B$, where $T_j$ is in the cell $\widetilde{T}$ and $T_i$ in some other cell, to the set $S$.
\end{itemize}
Let $S_{\wT}$ be the new tilted admissible set obtained in this way. Then $C_{\wT} = B / (S_{\wT}) = B / \Ann_B \Sigma_{\wT}$.

\begin{step}
Apply the algorithm starting in Step~\ref{step.r} to the new tilted admissible sets and tilted algebras until no new maximal tilted subalgebras are obtained any more.
\end{step}

\begin{remark}
We could also apply the procedure in the opposite direction (that is, move the local slice to the left).
\end{remark}

\begin{example}
Let $B$ be the cluster-tilted algebra with the following quiver.
\[ \begin{tikzpicture}[scale=.8,yscale=-1]
 \node (5) at (11,3) [inner sep=1pt] {5};
 \node (4) at (12,2) [inner sep=1pt] {4};
 \node (3) at (13,1) [inner sep=1pt] {3};
 \node (2) at (13,0) [inner sep=1pt] {2};
 \node (1) at (14,2) [inner sep=1pt] {1};
 \node at (11.5,.5) {$B \colon$};
 \draw [->] (1) -- (2);
 \draw [->] (1) -- (3);
 \draw [->] (3) -- (4);
 \draw [->] (2) -- (4);
 \draw [->] (4) -- (5);
 \draw [->] (4) -- (1);
\end{tikzpicture} \]
\setcounter{step}{0}
\begin{step}
Observe that the summands of the cluster-tilting object $T$ having endomorphism ring $B$ are distributed in the cluster category of $D_5$ as follows:
\[ \begin{tikzpicture}[scale=.8,yscale=-1]
% \fill [fill2] (1,-0.5) -- (-0.5,1) -- (0,1.5) -- (0.5,1)   -- (1,1.5) -- (1.5,1) -- (1,0.5) -- (1.5,0) -- cycle; 
% \fill [fill2] (9.5,1) -- (7.5,3) -- (8,3.5) -- (8.5,3) -- (10.5,1) -- (10,0.5) -- cycle;
 \fill [fill2] (1,-0.5) -- (0,0.5) -- (0,1.5) -- (0.5,1)   -- (1,1.5) -- (1.5,1) -- (1,0.5) -- (1.5,0) -- cycle; 
 \fill [fill2] (10,1.5) -- (10,0.5) -- (7.5,3) --  (8,3.5) --  cycle;
 \node (P31) at (0,1) [mvertex] {};
 \node (P51) at (0,3) [tvertex] {$\tau T_5$};
 \node (P11) at (1,0) [vertex] {};
 \node (P21) at (1,1) [vertex] {};
 \node (P41) at (1,2) [tvertex] {$\tau T_4$};
 \node (P32) at (2,1)  [vertex] {};
 \node (P52) at (2,3)  [tvertex] {$T_5$};
 \node (P12) at (3,0) [tvertex] {$\tau T_2$};
 \node (P22) at (3,1) [tvertex] {$\tau T_3$};
 \node (P42) at (3,2) [tvertex] {$T_4$};
 \node (P33) at (4,1) [vertex] {};
 \node (P53) at (4,3)  [vertex] {};
 \node (P13) at (5,0) [tvertex] {$T_2$};
 \node (P23) at (5,1) [tvertex] {$T_3$};
 \node (P43) at (5,2) [vertex] {};
 \node (P34) at (6,1) [vertex] {};
 \node (P54) at (6,3) [tvertex] {$\tau T_1$};
 \node (P14) at (7,0) [vertex] {};
 \node (P24) at (7,1) [vertex] {};
 \node (P44) at (7,2) [mvertex] {};
 \node (P35) at (8,1) [mvertex] {};
 \node (P55) at (8,3) [tvertex] {$T_1$};
 \node (P15) at (9,0) [mvertex] {};
 \node (P25) at (9,1) [mvertex] {};
 \node (P45) at (9,2) [vertex] {};
 \node (P36) at (10,1) [mvertex] {};
 \node (P56) at (10,3) [tvertex] {$\tau T_5$};
 \node [below left=1mm] at (P55.south) {$\Sigma_1$};
 \draw [dashed] (0,-.5) -- (P51);
 \draw [dashed] (P51) -- (0,3.5);
 \draw [dashed] (10,-.5) -- (P56);
 \draw [dashed] (P56) -- (10,3.5);
 \draw [->] (P31) -- (P11);
 \draw [->] (P31) -- (P21);
 \draw [->] (P31) -- (P41);
 \draw [->] (P51) -- (P41);
 \draw [->] (P11) -- (P32);
 \draw [->] (P21) -- (P32);
 \draw [->] (P41) -- (P32);
 \draw [->] (P41) -- (P52);
 \draw [->] (P32) -- (P12);
 \draw [->] (P32) -- (P22);
 \draw [->] (P32) -- (P42);
 \draw [->] (P52) -- (P42);
 \draw [->] (P12) -- (P33);
 \draw [->] (P22) -- (P33);
 \draw [->] (P42) -- (P33);
 \draw [->] (P42) -- (P53);
 \draw [->] (P33) -- (P13);
 \draw [->] (P33) -- (P23);
 \draw [->] (P33) -- (P43);
 \draw [->] (P53) -- (P43);
 \draw [->] (P13) -- (P34);
 \draw [->] (P23) -- (P34);
 \draw [->] (P43) -- (P34);
 \draw [->] (P43) -- (P54);
 \draw [->] (P34) -- (P14);
 \draw [->] (P34) -- (P24);
 \draw [->] (P34) -- (P44);
 \draw [->] (P54) -- (P44);
 \draw [->] (P14) -- (P35);
 \draw [->] (P24) -- (P35);
 \draw [->] (P44) -- (P35);
 \draw [->] (P44) -- (P55);
 \draw [->] (P35) -- (P15);
 \draw [->] (P35) -- (P25);
 \draw [->] (P35) -- (P45);
 \draw [->] (P55) -- (P45);
 \draw [->] (P15) -- (P36);
 \draw [->] (P25) -- (P36);
 \draw [->] (P45) -- (P36);
 \draw [->] (P45) -- (P56);
\end{tikzpicture} \]
\end{step}
\begin{step}
We see from the diagram above that the cells are $T_1$, $T_2 \oplus T_3$, and $T_4 \oplus T_5$, and hence the trenches are $\tau T_1$, $\tau T_2 \oplus \tau T_3$, and $\tau T_4 \oplus \tau T_5$.
\end{step}
\begin{step}
We choose our first local slice $\Sigma_1$ as indicated in the figure above.
\end{step}
\begin{step}
Since the only set which has non-empty intersection with $\Sigma_1$ is $\tau I(T_1,T_4)$ (this is the set indicated by the squares in the figure above), the corresponding tilted admissible set is $\{4\to 1\}$ and thus we obtain the maximal tilted subalgebra $C_1$, illustrated in the figure below.
\[ \begin{tikzpicture}[scale=.8,yscale=-1]
 \node (5) at (11,3) [inner sep=1pt] {5};
 \node (4) at (12,2) [inner sep=1pt] {4};
 \node (3) at (13,1) [inner sep=1pt] {3};
 \node (2) at (13,0) [inner sep=1pt] {2};
 \node (1) at (14,2) [inner sep=1pt] {1};
 \node at (11.5,.5) {$B \colon$};
 \draw [->] (1) -- (2);
 \draw [->] (1) -- (3);
 \draw [->] (3) -- (4);
 \draw [->] (2) -- (4);
 \draw [->] (4) -- (5);
 \draw [dashed] (4) -- (1);
\end{tikzpicture} \]
\end{step}
\begin{step}
The local slice $\Sigma_1$ is already as far to the right as possible.
\end{step}
\begin{step}
We note that the only relative source with respect to $\Sigma_1$ is $T_4 \oplus T_5$. Jumping the corresponding trench we obtain the new tilted admissible set $\{ 2 \to 4, 3 \to 4 \}$.
\end{step}
\begin{step}
See Figure~\ref{figure.explaining_example} for all maximal tilted algebras obtained by repeatedly applying the last three steps.
\end{step}
\end{example}

\begin{figure}[htb]
\[ \begin{tikzpicture}[baseline,scale = 0.8,yscale=-1]
% We draw the equivalence class 
 \fill [fill1] (4.5,0) -- (5,0.5) -- (4.5,1) -- (5,1.5) -- (5.5,1) -- (8,3.5) -- (10,1.5) -- (10,0.5) -- (9,-0.5) -- (5,-0.5) -- cycle;
%We draw the local slice
 \fill [fill2] (1,-0.5) -- (0,0.5) -- (0,1.5) -- (0.5,1)   -- (1,1.5) -- (1.5,1) -- (1,0.5) -- (1.5,0) -- cycle; 
 \fill [fill2] (10,1.5) -- (10,0.5) -- (7.5,3) --  (8,3.5) --  cycle;
 \node (P31) at (0,1) [vertex] {};
 \node (P51) at (0,3) [tvertex] {$\tau T_5$};
 \node (P11) at (1,0) [vertex] {};
 \node (P21) at (1,1) [vertex] {};
 \node (P41) at (1,2) [tvertex] {$\tau T_4$};
 \node (P32) at (2,1)  [vertex] {};
 \node (P52) at (2,3)  [vertex] {};
 \node (P12) at (3,0) [tvertex] {$\tau T_2$};
 \node (P22) at (3,1) [tvertex] {$\tau T_3$};
 \node (P42) at (3,2) [vertex] {};
 \node (P33) at (4,1) [vertex] {};
 \node (P53) at (4,3)  [vertex] {};
 \node (P13) at (5,0) [vertex] {};
 \node (P23) at (5,1) [vertex] {};
 \node (P43) at (5,2) [vertex] {};
 \node (P34) at (6,1) [vertex] {};
 \node (P54) at (6,3) [tvertex] {$\tau T_1$};
 \node (P14) at (7,0) [vertex] {};
 \node (P24) at (7,1) [vertex] {};
 \node (P44) at (7,2) [vertex] {};
 \node (P35) at (8,1) [vertex] {};
 \node (P55) at (8,3) [vertex] {};
 \node (P15) at (9,0) [vertex] {};
 \node (P25) at (9,1) [vertex] {};
 \node (P45) at (9,2) [vertex] {};
 \node (P36) at (10,1) [vertex] {};
 \node (P56) at (10,3) [tvertex] {$\tau T_5$};
 \node [below left=1mm] at (P55.south) {$\Sigma_1$};
 \node at (-.5,0) {a)};
 \draw [dashed] (0,-.5) -- (P51);
 \draw [dashed] (P51) -- (0,3.5);
 \draw [dashed] (10,-.5) -- (P56);
 \draw [dashed] (P56) -- (10,3.5);
 \draw [->] (P31) -- (P11);
 \draw [->] (P31) -- (P21);
 \draw [->] (P31) -- (P41);
 \draw [->] (P51) -- (P41);
 \draw [->] (P11) -- (P32);
 \draw [->] (P21) -- (P32);
 \draw [->] (P41) -- (P32);
 \draw [->] (P41) -- (P52);
 \draw [->] (P32) -- (P12);
 \draw [->] (P32) -- (P22);
 \draw [->] (P32) -- (P42);
 \draw [->] (P52) -- (P42);
 \draw [->] (P12) -- (P33);
 \draw [->] (P22) -- (P33);
 \draw [->] (P42) -- (P33);
 \draw [->] (P42) -- (P53);
 \draw [->] (P33) -- (P13);
 \draw [->] (P33) -- (P23);
 \draw [->] (P33) -- (P43);
 \draw [->] (P53) -- (P43);
 \draw [->] (P13) -- (P34);
 \draw [->] (P23) -- (P34);
 \draw [->] (P43) -- (P34);
 \draw [->] (P43) -- (P54);
 \draw [->] (P34) -- (P14);
 \draw [->] (P34) -- (P24);
 \draw [->] (P34) -- (P44);
 \draw [->] (P54) -- (P44);
 \draw [->] (P14) -- (P35);
 \draw [->] (P24) -- (P35);
 \draw [->] (P44) -- (P35);
 \draw [->] (P44) -- (P55);
 \draw [->] (P35) -- (P15);
 \draw [->] (P35) -- (P25);
 \draw [->] (P35) -- (P45);
 \draw [->] (P55) -- (P45);
 \draw [->] (P15) -- (P36);
 \draw [->] (P25) -- (P36);
 \draw [->] (P45) -- (P36);
 \draw [->] (P45) -- (P56);
\end{tikzpicture}
\quad
\begin{tikzpicture}[baseline,scale=.8,yscale=-1]
 \node at (.5,.5) {$C_1 \colon$};
 \node (5) at (0,3) [inner sep=1pt] {5};
 \node (4) at (1,2) [inner sep=1pt] {4};
 \node (3) at (2,1) [inner sep=1pt] {3};
 \node (2) at (2,0) [inner sep=1pt] {2};
 \node (1) at (3,2) [inner sep=1pt] {1};
 \draw [->] (1) -- (2);
 \draw [->] (1) -- (3);
 \draw [->] (3) -- (4);
 \draw [->] (2) -- (4);
 \draw [->] (4) -- (5);
 \draw [dashed] (4) -- (1);
\end{tikzpicture} \]
% Here comes step 2.
\[ \begin{tikzpicture}[baseline,scale = 0.8,yscale=-1]
% We draw the equivalence class 
 \fill [fill1] (0.5,0) -- (1,0.5) -- (0.5,1) -- (1,1.5) -- (1.5,1) -- (2.5,2) -- (1.5,3) -- (2,3.5) -- (4,3.5) -- (4.5,3) -- (1,-0.5) -- cycle;
%We draw the local slice
 \fill [fill2] (0.5,0) -- (1,0.5) -- (0.5,1) -- (1,1.5) -- (1.5,1) -- (2.5,2) -- (1.5,3) -- (2,3.5) -- (2.5,3) -- (3.5,2) -- (1,-0.5) -- cycle;
 \node (P31) at (0,1) [vertex] {};
 \node (P51) at (0,3) [tvertex] {$\tau T_5$};
 \node (P11) at (1,0) [vertex] {};
 \node (P21) at (1,1) [vertex] {};
 \node (P41) at (1,2) [tvertex] {$\tau T_4$};
 \node (P32) at (2,1) [vertex] {};
 \node (P52) at (2,3) [vertex] {};
 \node (P12) at (3,0) [tvertex] {$\tau T_2$};
 \node (P22) at (3,1) [tvertex] {$\tau T_3$};
 \node (P42) at (3,2) [vertex] {};
 \node (P33) at (4,1) [vertex] {};
 \node (P53) at (4,3) [vertex] {};
 \node (P13) at (5,0) [vertex] {};
 \node (P23) at (5,1) [vertex] {};
 \node (P43) at (5,2) [vertex] {};
 \node (P34) at (6,1) [vertex] {};
 \node (P54) at (6,3) [tvertex] {$\tau T_1$};
 \node (P14) at (7,0) [vertex] {};
 \node (P24) at (7,1) [vertex] {};
 \node (P44) at (7,2) [vertex] {};
 \node (P35) at (8,1) [vertex] {};
 \node (P55) at (8,3) [vertex] {};
 \node (P15) at (9,0) [vertex] {};
 \node (P25) at (9,1) [vertex] {};
 \node (P45) at (9,2) [vertex] {};
 \node (P36) at (10,1) [vertex] {};
 \node (P56) at (10,3) [tvertex] {$\tau T_5$};
 \node [below left=1mm] at (P52.south) {$\Sigma_2$};
 \node at (-.5,0) {b)};
 \draw [dashed] (0,-.5) -- (P51);
 \draw [dashed] (P51) -- (0,3.5);
 \draw [dashed] (10,-.5) -- (P56);
 \draw [dashed] (P56) -- (10,3.5);
 \draw [->] (P31) -- (P11);
 \draw [->] (P31) -- (P21);
 \draw [->] (P31) -- (P41);
 \draw [->] (P51) -- (P41);
 \draw [->] (P11) -- (P32);
 \draw [->] (P21) -- (P32);
 \draw [->] (P41) -- (P32);
 \draw [->] (P41) -- (P52);
 \draw [->] (P32) -- (P12);
 \draw [->] (P32) -- (P22);
 \draw [->] (P32) -- (P42);
 \draw [->] (P52) -- (P42);
 \draw [->] (P12) -- (P33);
 \draw [->] (P22) -- (P33);
 \draw [->] (P42) -- (P33);
 \draw [->] (P42) -- (P53);
 \draw [->] (P33) -- (P13);
 \draw [->] (P33) -- (P23);
 \draw [->] (P33) -- (P43);
 \draw [->] (P53) -- (P43);
 \draw [->] (P13) -- (P34);
 \draw [->] (P23) -- (P34);
 \draw [->] (P43) -- (P34);
 \draw [->] (P43) -- (P54);
 \draw [->] (P34) -- (P14);
 \draw [->] (P34) -- (P24);
 \draw [->] (P34) -- (P44);
 \draw [->] (P54) -- (P44);
 \draw [->] (P14) -- (P35);
 \draw [->] (P24) -- (P35);
 \draw [->] (P44) -- (P35);
 \draw [->] (P44) -- (P55);
 \draw [->] (P35) -- (P15);
 \draw [->] (P35) -- (P25);
 \draw [->] (P35) -- (P45);
 \draw [->] (P55) -- (P45);
 \draw [->] (P15) -- (P36);
 \draw [->] (P25) -- (P36);
 \draw [->] (P45) -- (P36);
 \draw [->] (P45) -- (P56);
\end{tikzpicture}
\quad
\begin{tikzpicture}[baseline,scale=.8,yscale=-1]
 \node at (.5,.5) {$C_2 \colon$};
 \node (5) at (0,3) [inner sep=1pt] {5};
 \node (4) at (1,2) [inner sep=1pt] {4};
 \node (3) at (2,1) [inner sep=1pt] {3};
 \node (2) at (2,0) [inner sep=1pt] {2};
 \node (1) at (3,2) [inner sep=1pt] {1};
 \draw [->] (1) -- (2);
 \draw [->] (1) -- (3);
 \draw [dashed] (3) -- (4);
 \draw [dashed] (2) -- (4);
 \draw [->] (4) -- (5);
 \draw [->] (4) -- (1);
\end{tikzpicture} \]
%Here comes step 3
\[ \begin{tikzpicture}[baseline,scale = 0.8,yscale=-1]
% We draw the equivalence class 
 \fill [fill1] (4.5,0) -- (1.5,3) -- (2,3.5) -- (4,3.5) -- (6.5,1) -- (7,1.5) -- (7.5,1) -- (7,0.5) -- (7.5,0) --  (7,-0.5) -- (5, -0.5) -- cycle;
%We draw the local slice
 \fill [fill2] (4.5,0) -- (1.5,3) -- (2,3.5) -- (4.5,1) -- (5,1.5)-- (5.5,1) -- (5,0.5) -- (5.5,0) -- (5,-0.5) -- cycle;
 \fill [fill12] (6.5,0) -- (3.5,3) -- (4,3.5) -- (6.5,1) -- (7,1.5)-- (7.5,1) -- (7,0.5) -- (7.5,0) -- (7,-0.5) -- cycle;
 \node (P31) at (0,1) [vertex] {};
 \node (P51) at (0,3) [tvertex] {$\tau T_5$};
 \node (P11) at (1,0) [vertex] {};
 \node (P21) at (1,1) [vertex] {};
 \node (P41) at (1,2) [tvertex] {$\tau T_4$};
 \node (P32) at (2,1) [vertex] {};
 \node (P52) at (2,3) [vertex] {};
 \node (P12) at (3,0) [tvertex] {$\tau T_2$};
 \node (P22) at (3,1) [tvertex] {$\tau T_3$};
 \node (P42) at (3,2) [vertex] {};
 \node (P33) at (4,1) [vertex] {};
 \node (P53) at (4,3) [vertex] {};
 \node (P13) at (5,0) [vertex] {};
 \node (P23) at (5,1) [vertex] {};
 \node (P43) at (5,2) [vertex] {};
 \node (P34) at (6,1) [vertex] {};
 \node (P54) at (6,3) [tvertex] {$\tau T_1$};
 \node (P14) at (7,0) [vertex] {};
 \node (P24) at (7,1) [vertex] {};
 \node (P44) at (7,2) [vertex] {};
 \node (P35) at (8,1) [vertex] {};
 \node (P55) at (8,3) [vertex] {};
 \node (P15) at (9,0) [vertex] {};
 \node (P25) at (9,1) [vertex] {};
 \node (P45) at (9,2) [vertex] {};
 \node (P36) at (10,1) [vertex] {};
 \node (P56) at (10,3) [tvertex] {$\tau T_5$};
 \node [below left=1mm] at (P52.south) {$\Sigma_3$};
 \node [below left=1mm] at (P53.south) {$\Sigma'_3$};
 \node at (-.5,0) {c)};
 \draw [dashed] (0,-.5) -- (P51);
 \draw [dashed] (P51) -- (0,3.5);
 \draw [dashed] (10,-.5) -- (P56);
 \draw [dashed] (P56) -- (10,3.5);
 \draw [->] (P31) -- (P11);
 \draw [->] (P31) -- (P21);
 \draw [->] (P31) -- (P41);
 \draw [->] (P51) -- (P41);
 \draw [->] (P11) -- (P32);
 \draw [->] (P21) -- (P32);
 \draw [->] (P41) -- (P32);
 \draw [->] (P41) -- (P52);
 \draw [->] (P32) -- (P12);
 \draw [->] (P32) -- (P22);
 \draw [->] (P32) -- (P42);
 \draw [->] (P52) -- (P42);
 \draw [->] (P12) -- (P33);
 \draw [->] (P22) -- (P33);
 \draw [->] (P42) -- (P33);
 \draw [->] (P42) -- (P53);
 \draw [->] (P33) -- (P13);
 \draw [->] (P33) -- (P23);
 \draw [->] (P33) -- (P43);
 \draw [->] (P53) -- (P43);
 \draw [->] (P13) -- (P34);
 \draw [->] (P23) -- (P34);
 \draw [->] (P43) -- (P34);
 \draw [->] (P43) -- (P54);
 \draw [->] (P34) -- (P14);
 \draw [->] (P34) -- (P24);
 \draw [->] (P34) -- (P44);
 \draw [->] (P54) -- (P44);
 \draw [->] (P14) -- (P35);
 \draw [->] (P24) -- (P35);
 \draw [->] (P44) -- (P35);
 \draw [->] (P44) -- (P55);
 \draw [->] (P35) -- (P15);
 \draw [->] (P35) -- (P25);
 \draw [->] (P35) -- (P45);
 \draw [->] (P55) -- (P45);
 \draw [->] (P15) -- (P36);
 \draw [->] (P25) -- (P36);
 \draw [->] (P45) -- (P36);
 \draw [->] (P45) -- (P56);
\end{tikzpicture}
\quad
\begin{tikzpicture}[baseline,scale=.8,yscale=-1]
 \node at (.5,.5) {$C_3 \colon$};
 \node (5) at (0,3) [inner sep=1pt] {5};
 \node (4) at (1,2) [inner sep=1pt] {4};
 \node (3) at (2,1) [inner sep=1pt] {3};
 \node (2) at (2,0) [inner sep=1pt] {2};
 \node (1) at (3,2) [inner sep=1pt] {1};
 \draw [dashed] (1) -- (2);
 \draw [dashed] (1) -- (3);
 \draw [->] (3) -- (4);
 \draw [->] (2) -- (4);
 \draw [->] (4) -- (5);
 \draw [->] (4) -- (1);
\end{tikzpicture} \]
%Here comes step 4
\[ \begin{tikzpicture}[baseline,scale = 0.8,yscale=-1]
% We draw the equivalence class 
  \fill [fill1] (4.5,0) -- (5,0.5) -- (4.5,1) -- (5,1.5) -- (5.5,1) -- (8,3.5) -- (10,1.5) -- (10,0.5) -- (9,-0.5) -- (5,-0.5) -- cycle;
 \fill [fill1] (1,-0.5) -- (0,0.5) -- (0,1.5) -- (0.5,1)   -- (1,1.5) -- (1.5,1) -- (1,0.5) -- (1.5,0) -- cycle; 
%We draw the local slice
 \fill [fill2] (6.5,0) -- (5.5,1) -- (8,3.5) -- (8.5,3) -- (6.5,1) -- (7,1.5) -- (7.5,1) -- (7,0.5) -- (7.5,0) -- (7,-0.5) -- cycle;
 \node (P31) at (0,1) [vertex] {};
 \node (P51) at (0,3) [tvertex] {$\tau T_5$};
 \node (P11) at (1,0) [vertex] {};
 \node (P21) at (1,1) [vertex] {};
 \node (P41) at (1,2) [tvertex] {$\tau T_4$};
 \node (P32) at (2,1)  [vertex] {};
 \node (P52) at (2,3)  [vertex] {};
 \node (P12) at (3,0) [tvertex] {$\tau T_2$};
 \node (P22) at (3,1) [tvertex] {$\tau T_3$};
 \node (P42) at (3,2) [vertex] {};
 \node (P33) at (4,1) [vertex] {};
 \node (P53) at (4,3)  [vertex] {};
 \node (P13) at (5,0) [vertex] {};
 \node (P23) at (5,1) [vertex] {};
 \node (P43) at (5,2) [vertex] {};
 \node (P34) at (6,1) [vertex] {};
 \node (P54) at (6,3) [tvertex] {$\tau T_1$};
 \node (P14) at (7,0) [vertex] {};
 \node (P24) at (7,1) [vertex] {};
 \node (P44) at (7,2) [vertex] {};
 \node (P35) at (8,1) [vertex] {};
 \node (P55) at (8,3) [vertex] {};
 \node (P15) at (9,0) [vertex] {};
 \node (P25) at (9,1) [vertex] {};
 \node (P45) at (9,2) [vertex] {};
 \node (P36) at (10,1) [vertex] {};
 \node (P56) at (10,3) [tvertex] {$\tau T_5$};
 \node [below right=1mm] at (P55.south) {$\Sigma_4$};
 \node at (-.5,0) {d)};  
 \draw [dashed] (0,-.5) -- (P51);
 \draw [dashed] (P51) -- (0,3.5);
 \draw [dashed] (10,-.5) -- (P56);
 \draw [dashed] (P56) -- (10,3.5);
 \draw [->] (P31) -- (P11);
 \draw [->] (P31) -- (P21);
 \draw [->] (P31) -- (P41);
 \draw [->] (P51) -- (P41);
 \draw [->] (P11) -- (P32);
 \draw [->] (P21) -- (P32);
 \draw [->] (P41) -- (P32);
 \draw [->] (P41) -- (P52);
 \draw [->] (P32) -- (P12);
 \draw [->] (P32) -- (P22);
 \draw [->] (P32) -- (P42);
 \draw [->] (P52) -- (P42);
 \draw [->] (P12) -- (P33);
 \draw [->] (P22) -- (P33);
 \draw [->] (P42) -- (P33);
 \draw [->] (P42) -- (P53);
 \draw [->] (P33) -- (P13);
 \draw [->] (P33) -- (P23);
 \draw [->] (P33) -- (P43);
 \draw [->] (P53) -- (P43);
 \draw [->] (P13) -- (P34);
 \draw [->] (P23) -- (P34);
 \draw [->] (P43) -- (P34);
 \draw [->] (P43) -- (P54);
 \draw [->] (P34) -- (P14);
 \draw [->] (P34) -- (P24);
 \draw [->] (P34) -- (P44);
 \draw [->] (P54) -- (P44);
 \draw [->] (P14) -- (P35);
 \draw [->] (P24) -- (P35);
 \draw [->] (P44) -- (P35);
 \draw [->] (P44) -- (P55);
 \draw [->] (P35) -- (P15);
 \draw [->] (P35) -- (P25);
 \draw [->] (P35) -- (P45);
 \draw [->] (P55) -- (P45);
 \draw [->] (P15) -- (P36);
 \draw [->] (P25) -- (P36);
 \draw [->] (P45) -- (P36);
 \draw [->] (P45) -- (P56);
\end{tikzpicture}
\quad
\begin{tikzpicture}[baseline,scale=.8,yscale=-1]
 \node at (.5,.5) {$C_4 \colon$};
 \node (5) at (0,3) [inner sep=1pt] {5};
 \node (4) at (1,2) [inner sep=1pt] {4};
 \node (3) at (2,1) [inner sep=1pt] {3};
 \node (2) at (2,0) [inner sep=1pt] {2};
 \node (1) at (3,2) [inner sep=1pt] {1};
 \draw [->] (1) -- (2);
 \draw [->] (1) -- (3);
 \draw [->] (3) -- (4);
 \draw [->] (2) -- (4);
 \draw [->] (4) -- (5);
 \draw [dashed] (4) -- (1);
\end{tikzpicture} \] 
\caption{\label{figure.explaining_example} In this figure, the light grey areas denote the equivalence class of the local slices shown in darker grey. To the right we show the corresponding maximal tilted subalgebras. 
Observe that $\Sigma_2 = J^-_{\tau T_4 \oplus \tau T_5} \Sigma_1$, $\Sigma_3 = J^-_{\tau T_2 \oplus \tau T_3} \Sigma_2$ is homotopic to $\Sigma'_3$ and $\Sigma_4 = J^-_{\tau T_1} \Sigma'_3$ is homotopic to $\Sigma_1$. Furthermore, $[\Sigma_1],[\Sigma_2]$ and $[\Sigma_3]$ are all the equivalence classes of local slices in $\cC \setminus \add(\tau T)$ (or equivalently in $\mod B$).}
\end{figure}
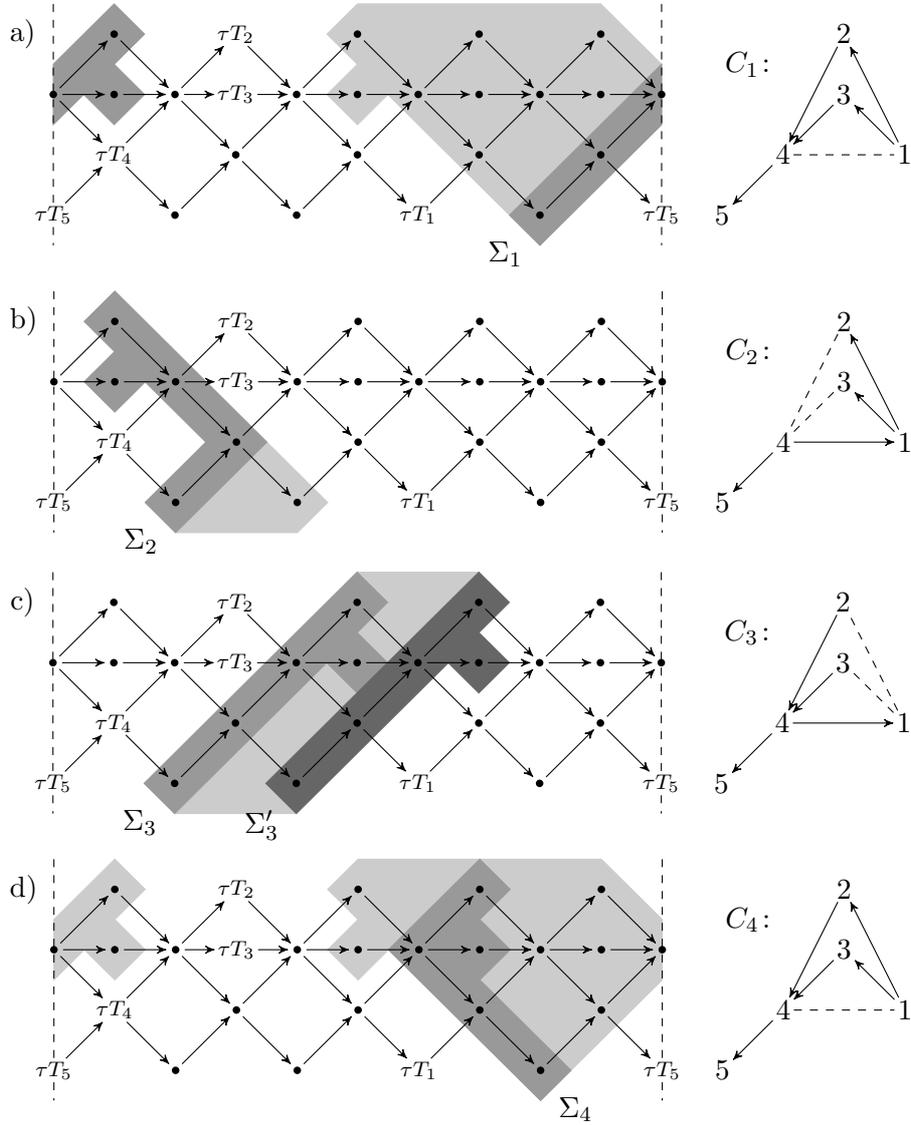

The following example shows that in Step~\ref{step.choosejump} we have to follow the local slice. Some relative sources cannot be jumped.

\begin{example} Let $C_1$ be the tilted algebra of type $A_5$ shown below. The cell corresponding to vertex $3$ is a relative source. If we apply the 2-APR tilt at the indecomposable projective $C_{1}$-module at vertex $3$ we obtain the algebra $C_2$ which is iterated tilted of type $A_5$ but not tilted. 
\[ \begin{tikzpicture}[yscale=-1]
\node (51) at (1,1) [inner sep=1pt] {$5$};
\node (41) at (1.5,0.14) [inner sep=1pt] {$4$};
\node (31) at (1.5,1.86) [inner sep=1pt] {$3$};
\node (21) at (2,1) [inner sep=1pt] {$2$};
\node (11) at (2.5,1.86) [inner sep=1pt] {$1$};
\node (C1) at (.5,.5) {$C_1 \colon$};
\draw [->] (11) -- (21);
\draw [->] (21) -- (31);
\draw [dashed] (31) -- (11);
\draw [->] (21) -- (41);
\draw [->] (41) -- (51);
\draw [dashed] (51) -- (21);
 \draw [leadsto] (3,1) -- (5,1);
  \node [above] at (4,0.5) {mutate at};   
  \node [above] at (4,0.9) {$3$};
\node (52) at (6,1) [inner sep=1pt] {$5$};
\node (42) at (6.5,.14) [inner sep=1pt] {$4$};
\node (32) at (6.5,1.86) [inner sep=1pt] {$3$};
\node (22) at (7,1) [inner sep=1pt] {$2$};
\node (12) at (7.5,1.86) [inner sep=1pt] {$1$};
\node (C2) at (5.5,.5) {$C_2$};
\draw [->] (12) -- (22);
\draw [dashed] (22) -- (32);
\draw [->] (32) -- (12);
\draw [->] (22) -- (42);
\draw [->] (42) -- (52);
\draw [dashed] (52) -- (22);
\end{tikzpicture}\]
\end{example}

\section{Representation infinite cluster-tilted algebras} \label{section_infinite}

In this section, we explain how the theory developed in Sections~\ref{section.Loc_slices} to \ref{section_algorithm} can be generalized to find all the maximal tilted subalgebras of a cluster-tilted algebra $B$ of infinite type. We assume that we know the distribution of the direct summands of the cluster-tilting object in the AR-quiver of the cluster category.

The main task is to generalize the results of Section~\ref{section.Loc_slices} to this more general setup. 

First, observe that Theorem~\ref{theorem.generate_annihilator} holds for an arbitrary cluster tilted algebra. In this case, we might have multiple arrows between a pair of vertices. Let $\alpha \from b \to a$ be an arrow in an admissible tilted set $S$, and assume that there is another arrow $\beta$ from $b$ to $a$. We claim that $\beta$ belongs to $S$. To see this, recall that by \cite[2.4]{Hubner}, only one of the spaces $\Ext_C^i(S_a,S_b)$ can be non-zero for $i=0,1,2$, where $S_a, S_b$ are the simple $C$-modules at the vertices $a$ and $b$ for the tilted algebra $C=B/\langle S \rangle$. Note that the arrow $\alpha$ in $S$ corresponds to a minimal relation in $\Ext^2_C(S_a,S_b)\neq 0$. Therefore $\beta$ also corresponds to a minimal relation in the same space, and thus $\beta$ belongs to $S$.

 Second, notice that Proposition~\ref{prop.tilted_admissible.loc_slice} relies only on Theorem~\ref{theorem.generate_annihilator}~(b), and thus holds in this generality.

We will generalize Definitions~\ref{definition.homotopy} and \ref{definition.celldecomp}. This is done for two reasons: First, to deal with the fact that, in general, there is a finite number of indecomposable objects lying in the connecting component of $\mod B$ that does not belong to any local slice (see \cite[22]{ABS2}). Second, to deal with the possible regular summands of the cluster-tilting object.

The results of Section~\ref{section.2apr} have been proven without assuming that the algebra is representation finite. Hence, with the alterations mentioned above, the algorithm works as presented in Section~\ref{section_algorithm} in this more general setup. \\

The change to the definition of homotopy of local slices is fairly straight forward.

\begin{definition} \label{definition.homotopy_inf}
Let $T$ be a cluster-tilting object in $\cC$, and $B = \End_{\cC}(T)^{\op}$. Let $\Sigma$ and $\Sigma'$ be two local slices in $\mod B$. We say that $\Sigma$ and $\Sigma'$ are homotopic, if $\Sigma \sim \Sigma'$ in the sense of Definition~\ref{definition.homotopy}, or if $\mathcal{C}$ is infinite, $T$ has no regular direct summands, and \emph{all} direct summands of $\tau T$ in the connecting component lie either at the same side of both $\Sigma$ and $\Sigma'$ or in between them.
\end{definition}

\begin{remark} Note that the sufficiency part of Theorem~\ref{theorem.homotopy} is also valid for this definition of ``homotopic''. However, for the necessity part it remains to deal with the case when the cluster-tilting object has nonzero regular summands. Using the same notation as in Theorem~\ref{theorem.homotopy}, assume $T$ has nonzero regular summands and pick two local slices $\Sigma \sim \Sigma'$ in $\mod B$. The critical case is when either all summands of $\tau T$ lie at the same side of both local slices or in between them. In any case, they both kill the same arrows from $Q_B$. Thus the theorem remains valid under this setting.
\end{remark}

Now we change the definition of the equivalence relation $\equiv$, and hence of cells and trenches, to fit this more general setup.

\begin{definition}\label{definition.celldecomp.infinity}
Let $H$ be a hereditary algebra and $T \in \mod H$ a tilting module. For $T'$ and $T''$ indecomposable summands of $T$ we write $T' \ors T''$ if at least one of
\[ \Hom_H(T', T'') \neq 0 \quad \vee \quad \Hom_H(\tau T', T'') \neq 0 \quad \vee \quad \Hom_H(T', \tau^- T'') \neq 0 \] holds.
We denote by $\orr$ the transitive hull of this relation. We write $T' \eqr T''$ if $T'$ and $T''$ are both regular, or $T' \orr T'' \orr T'$. This is an equivalence relation. Note that $\orr$ induces a partial order on the equivalence classes.

We use similar constructions in $\mathcal{D} = D^b(\mod H)$.

For $T \in \cC$ cluster-tilting, and $\Sigma$ a local slice with $\Sigma \cap \add \tau T=0$, we use the corresponding tilting module $D\Hom_{\cC}(T, \Sigma)$ over the hereditary algebra $\End_{\cC}(\Sigma)^{\op}$ to obtain similar notions. For $T'$ and $T''$ indecomposable summands of $T$ we write $T' \ors_{\Sigma} T''$ if $D \Hom_{\mathcal{C}}(T', \Sigma) \ors D \Hom_{\mathcal{C}}(T'', \Sigma)$. Similarly we obtain an equivalence relation $\eqr_{\Sigma}$.
\end{definition}

\begin{remarks} \mbox{}
\begin{enumerate}
\item It appears as if our definition of the equivalence relation $\eqr_{\Sigma}$ in $\cC$ depends on the choice of $\Sigma$. We will see that this is not the case (see Corollary~\ref{corollary.independant}).
\item Note that the set of complete slices in $\mod H$ forms a lattice (i.e.\ is partially ordered and has suprema and infima -- this is induced by comparing $\tau$-orbit-wise).
\end{enumerate}
\end{remarks}

Next we prove some technical lemmas which will be useful for the rest of the section.
\begin{lemma} \label{lemma.notintauleft}
Let $T_1 \oplus T_2$ be a tilting module over a hereditary algebra $H$, and $\Sigma$ the smallest complete slice containing $T_1$. Then $\add T_2 \cap \tau \Sigma = 0$.
\end{lemma}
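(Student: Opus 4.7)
The plan is to argue by contradiction: assume an indecomposable summand $T_2'$ of $T_2$ lies in $\tau\Sigma$ and construct a complete slice $\Sigma'<\Sigma$ still containing $T_1$, which will contradict the minimality of $\Sigma$ in the lattice of complete slices. Write $T_2'=\tau X$ with $X\in\Sigma$; then $X$ is non-projective because $\tau X$ is defined. First I will rule out $X\in\add T_1$: if $X\in\add T_1$, the hereditary AR-formula $\Ext^1_H(M,N)\cong D\Hom_H(N,\tau M)$ applied with $M=N=X$ would give $\Ext^1_H(X,T_2')\cong D\End_H(T_2')\neq0$, contradicting that $T=T_1\oplus T_2$ is a tilting module. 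Next, applying the other form of the AR-formula $\Ext^1_H(T_1,T_2')\cong D\Hom_H(\tau^{-1}T_2',T_1)=D\Hom_H(X,T_1)$ (valid because $T_2'$ is non-injective) together with the vanishing $\Ext^1_H(T_1,T_2')=0$, yields $\Hom_H(X,T_1')=0$ for every indecomposable summand $T_1'$ of $T_1$.

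The next step uses the Happel--Ringel description of a complete slice: for two objects in $\Sigma$, the Hom-space in $\mod H$ coincides with the space of directed paths between them in the underlying quiver of $\Sigma\subseteq\Gamma$ (via the hereditary tilted algebra $\End_H(T_\Sigma)^{\op}$). The vanishing above therefore implies that no directed path in $\Sigma$ starting at $X$ reaches any summand of $T_1$. I will define $\Sigma_X\subseteq\Sigma$ as the full subquiver consisting of indecomposables reachable from $X$ by directed paths in $\Sigma$ (including $X$ itself); then $\Sigma_X\cap\add T_1=\emptyset$. I will also show that $\Sigma_X$ contains no projective module: for a hereditary algebra every irreducible map into an indecomposable projective comes from another projective (since $\rad P$ is projective), so iterating backwards along a directed path in $\Gamma$ ending at a projective forces the starting vertex to be projective, contradicting that $X$ is non-projective.

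Having these properties, I will set $\Sigma':=(\Sigma\setminus\Sigma_X)\cup\tau\Sigma_X$. Each $\tau Y$ for $Y\in\Sigma_X$ is defined, $\Sigma'$ still contains one representative of every $\tau$-orbit, $T_1\subseteq\Sigma\setminus\Sigma_X\subseteq\Sigma'$, and $\Sigma'<\Sigma$ in the orbit-wise lattice because in the orbit of $X$ the representative has been shifted from $X$ to $\tau X=T_2'$. The main technical step is then to verify that $\Sigma'$ satisfies the slice axioms; this is a case analysis on arrows $U\to V$ in $\Gamma$ for which either endpoint lies in $\Sigma'$, and it is driven by two ingredients: (i) the successor-closedness of $\Sigma_X$ inside the slice $\Sigma$ (equivalently, the predecessor-closedness of $\Sigma\setminus\Sigma_X$), and (ii) the $\tau$-equivariance of irreducible maps, i.e., an arrow $A\to B$ in $\Gamma$ yields an arrow $\tau^{\pm1}A\to\tau^{\pm1}B$ whenever the $\tau$-shift is defined. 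Together these let me translate each slice condition for $\Sigma'$ at a vertex of $\tau\Sigma_X$ back to a slice condition for $\Sigma$ at the corresponding vertex of $\Sigma_X$, which holds by assumption. Once $\Sigma'$ is shown to be a complete slice, it contradicts the minimality of $\Sigma$. I expect the slice-axiom verification to be the main obstacle: it is morally forced by (i) and (ii), but requires careful bookkeeping of where each orbit representative sits relative to $\Sigma_X$, in particular near the boundary between $\Sigma_X$ and its complement.
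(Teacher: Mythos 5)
Your argument is correct and is essentially the paper's proof: both hinge on the AR formula $\Ext^1_H(T_1,T')\cong D\Hom_H(\tau^{-}T',T_1)$ together with the minimality of $\Sigma$, the only difference being that the paper compresses your entire slice-shifting construction into the single assertion that minimality forces a non-zero morphism $T'\to\tau T_1$. Your expanded version (push the successor-closed set $\Sigma_X$ one $\tau$-step to the left, which is a finite sequence of standard sink-moves) is a legitimate way to justify that assertion, and the remaining slice-axiom bookkeeping you flag does go through; only note the small slip that the first application of the AR formula should be with $M=X$, $N=T_2'$ rather than $M=N=X$.
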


\begin{proof}
 Assume that $0\ne T' \in \add T_2 \cap \tau \Sigma$. Since $\Sigma$ is the smallest complete slice containing $T_1$, there is a non-zero morphism $T' \to \tau T_1$. This means that $ 0 \ne \Hom_H (T',\tau T_1) = D\Ext^1_H(T_1,T') $, contradicting the fact that $T$ is a tilting module.
\end{proof}

\begin{lemma} \label{lemma.insliced}
Let $T$ be a tilting module over a hereditary algebra $H$, $T' \in \add T$ indecomposable non-regular. Then the equivalence class $\cl{T'}$ is contained in a complete slice.
\end{lemma}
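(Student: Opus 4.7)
The strategy is to reduce to the preprojective case via duality, to show the equivalence class consists of preprojective summands lying one per $\tau$-orbit and satisfying the vanishing $\Hom_H(U,\tau V)=0$, and then to assemble a complete slice containing the class.

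First, by the standard duality between $\mod H$ and $\mod H^{\op}$, I may assume without loss of generality that $T'$ is preprojective. I would then observe that the last two alternatives in the definition of $\ors$ are ruled out for tilting modules: by AR-duality,
\[
\Hom_H(\tau T', T'') \;=\; D\Ext^1_H(T'',T') \;=\; 0,
\]
and similarly $\Hom_H(T',\tau^- T'')=0$, since $T$ is tilting. Hence on summands of $T$ the relation $\ors$ reduces to $\Hom_H(T',T'')\neq 0$. It follows that every $T''\in\cl{T'}$ is preprojective: the relation $T''\orr T'$ gives a Hom-path of summands of $T$ from $T''$ back to $T'$, and there are no nonzero morphisms from a regular or preinjective module to a preprojective one over a hereditary algebra.

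Next I would show that distinct elements of $\cl{T'}$ lie in distinct $\tau$-orbits and are pairwise ``slice-compatible''. If $U$ and $V$ lay in the same $\tau$-orbit, one of them would be strictly to the right of the other in the preprojective component, making a Hom-path back impossible and contradicting $U\eqr V$. For pairwise compatibility, the tilting condition gives $\Ext^1_H(U,V)=0=\Ext^1_H(V,U)$, and hence
\[
\Hom_H(U,\tau V) \;=\; D\Ext^1_H(V,U) \;=\; 0, \qquad \Hom_H(V,\tau U) \;=\; 0.
\]
This is exactly the property required of two elements of a section of the preprojective component.

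The main obstacle will be the final step: extending $\cl{T'}$ to an actual complete slice $\Sigma$. Since $\cl{T'}$ consists of preprojective indecomposables lying in distinct $\tau$-orbits and satisfying $\Hom_H(-,\tau-)=0$ pairwise, it forms a partial section of the AR-quiver. I would complete it to a full section by starting from any complete slice in the preprojective component and performing successive $\tau^{\pm}$-moves at individual vertices (as in the operations $\tau^{\pm}_X\Sigma$ of Section~\ref{section.Loc_slices}) until each element of $\cl{T'}$ is reached; the compatibility conditions verified above ensure that no such move is obstructed, and the resulting complete slice contains $\cl{T'}$.
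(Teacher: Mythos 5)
There is a genuine gap, and it sits at the foundation of your argument: the claim that the last two alternatives in the definition of $\ors$ vanish for summands of a tilting module. You have applied the Auslander--Reiten formula with the translate on the wrong side. The correct statement is $\Ext^1_H(X,Y)\cong D\overline{\Hom}_H(Y,\tau X)$, so the tilting condition $\Ext^1_H(T'',T')=0$ controls $\Hom_H(T',\tau T'')$ (and dually $\Hom_H(\tau^- T',T'')$), \emph{not} $\Hom_H(\tau T',T'')$ or $\Hom_H(T',\tau^- T'')$, which are the spaces actually appearing in Definition~\ref{definition.celldecomp.infinity}. These latter spaces are very often nonzero: already for $H=k(1\to 2)$ and the tilting module $T=P_1\oplus S_1$ one has $\Hom_H(\tau S_1,P_1)=\Hom_H(P_2,P_1)\neq 0$, while $\Ext^1_H(P_1,S_1)=0$. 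Indeed, if these conditions were automatically zero the definition of $\ors$ would collapse to Hom-connectivity and would no longer match the finite-type notion of cell from Definition~\ref{definition.celldecomp}, where two distinct middle terms $T_a,T_b$ of one AR-triangle are equivalent precisely because $\Hom(T_a,\tau^- T_b)\neq 0$ along a sectional path, even though $\Hom(T_a,T_b)$ may vanish.

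Everything downstream leans on this false reduction. Once $\ors$ is allowed its shifted-Hom clauses, a chain realizing $T''\orr T'$ is not a Hom-path among summands, so your argument that the class is all preprojective, and especially your argument that two equivalent summands cannot lie in the same $\tau$-orbit (``being strictly to the right makes a path back impossible''), no longer follow: each link of a chain can drift one $\tau$-step to the left, and two summands $U$ and $\tau^2 U$ of a tilting module are not a priori excluded. Controlling this leftward drift is exactly the content of Lemma~\ref{lemma.notintauleft}, which the paper uses together with a maximality argument (take $S$ maximal with $\{T'\}\subseteq S\subseteq\cl{T'}$ contained in a complete slice, and show any further $T''\in\cl{T'}$ can lie neither strictly left of the minimal nor strictly right of the maximal slice through $S$). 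Your pairwise compatibility computation $\Hom_H(U,\tau V)=D\Ext^1_H(V,U)=0$ is the one correctly applied instance of the AR formula (up to maps through injectives, which do not occur here), and your final completion step is plausible in spirit, but as written the proof does not stand without repairing the $\ors$ analysis along the lines of Lemma~\ref{lemma.notintauleft}.
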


\begin{proof}
Since $T'$ is non-regular, it is contained in some complete slice. Let $S$ be maximal such that $\{T'\} \subseteq S \subseteq \cl{T'}$, with $S$ contained in a complete slice. Assume $T'' \in \cl{T'} \setminus S$. By Lemma~\ref{lemma.notintauleft} and the definition of $\orr$, the object $T''$ cannot lie properly to the left of the minimal slice containing $S$, and dually it cannot lie to the right of the maximal slice containing $S$. Hence there is a complete slice containing $S$ and $T''$.
\end{proof}

The following proposition follows immediately from Lemma~\ref{lemma.insliced}, looking at the projection $\cD \to \cC$.

\begin{proposition} \label{proposition.inslice}
Let $T$ be cluster-tilting in $\mathcal{C}$, and $\Sigma$ some local slice with $\Sigma \cap \add \tau T = 0$. Let $T' \in add T$ be indecomposable non-regular. Then $\cl[\Sigma]{T'}$ is contained in some local slice. 
\end{proposition}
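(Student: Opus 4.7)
My plan is to follow the hint, transferring the problem to a hereditary module category by using the local slice $\Sigma$ itself as the bridge, applying Lemma~\ref{lemma.insliced}, and then pulling the conclusion back to $\cC$. Set $H' = \End_\cC(\Sigma)^{\op}$. Because $\Sigma$ is a local slice, it is a section of $\Gamma_\cC$, so the only non-zero contributions to $\End_\cC(\Sigma)^{\op}$ come from $\End_\cD(\Sigma)^{\op}$, making $H'$ hereditary. Fix an identification $\cD = D^b(\mod H')$ in which $\Sigma$ is realised as a complete slice in $\mod H'$.

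The first key step is to lift $T$ to a tilting $H'$-module $\wT$ such that the indecomposable summands $\wT_a$ of $\wT$ are in bijection with those of $T$. This is exactly the module $\wT = D\Hom_\cC(T,\Sigma)$ of Definition~\ref{definition.celldecomp.infinity}: the hypothesis $\Sigma \cap \add(\tau T) = 0$ guarantees that no summand of $T$ is killed or shifted into a different homological degree under the passage from $\cC$ back to $\mod H'$. By the very definition of $\eqr_\Sigma$, this bijection intertwines $\eqr_\Sigma$ on summands of $T$ with $\eqr$ on summands of $\wT$, so $\cl[\Sigma]{T'}$ corresponds to $\cl{\wT'}$. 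Moreover non-regularity is preserved, since it is determined by which AR-component of $\cD$ a chosen lift lies in and the transjective components project faithfully to $\cC$; thus $\wT'$ is a non-regular summand of $\wT$.

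Now I apply Lemma~\ref{lemma.insliced} to the tilting module $\wT$ over the hereditary algebra $H'$ and the non-regular summand $\wT'$: this produces a complete slice $S \subset \mod H'$ with $\cl{\wT'} \subset S$. Viewing $S$ as a subcategory of $\cD$ and projecting through $\cD \to \cC = \cD/F$ yields a local slice $\Sigma_0 \subset \cC$. The bijection of summands identifies $\cl{\wT'} \subset S$ with $\cl[\Sigma]{T'} \subset \Sigma_0$, which is what we wanted.

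The main obstacle is really just the bookkeeping needed to set the stage: checking that $H'$ is hereditary, that $T$ lifts to an $H'$-module with its indecomposable decomposition intact, that the regular/non-regular dichotomy and the equivalence relation $\eqr_\Sigma$ both transfer faithfully across the projection, and that a complete slice in $\mod H' \subset \cD$ descends to a local slice in $\cC$. Once these identifications are in place, the proposition is an immediate corollary of Lemma~\ref{lemma.insliced}, exactly as asserted in the passage preceding the statement.
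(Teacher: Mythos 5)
Your argument is correct and is essentially the paper's: the paper dismisses this proposition as an immediate consequence of Lemma~\ref{lemma.insliced} via the projection $\cD \to \cC$, and you have simply supplied the bookkeeping, using the tilting module $D\Hom_\cC(T,\Sigma)$ over the hereditary algebra $\End_\cC(\Sigma)^{\op}$ that Definition~\ref{definition.celldecomp.infinity} already builds into the definition of $\eqr_\Sigma$. The transfer of non-regularity and the identification of complete slices in $\mod\End_\cC(\Sigma)^{\op}$ with local slices in $\cC$ are exactly the checks the paper leaves implicit.
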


The next corollary shows that summands of a cluster-tilting object, which are equivalent with respect to $\eqr_{\Sigma}$, can be lifted to the derived category in such a way that they remain equivalent.
\begin{corollary} \label{corollary.liftequiv}
Let $T$ be cluster-tilting in $\mathcal{C}$, and $\Sigma$ some local slice with $\Sigma \cap \add \tau T = 0$. Let $T_0 \in add T$ be indecomposable non-regular, and $\cl[\Sigma]{T_0} = \{T_0, \ldots, T_r\}$. Then we can find preimages $\{T_0^{\mathcal{D}}, \ldots, T_r^{\mathcal{D}} \}$ in $\mathcal{D}$ which lie in one complete slice.

In particular
\begin{align*}
\Hom_{\mathcal{D}}(T_i^{\mathcal{D}}, T_j^{\mathcal{D}}) & = \Hom_{\mathcal{C}}(T_i, T_j),\\
\Hom_{\mathcal{D}}(\tau T_i^{\mathcal{D}}, T_j^{\mathcal{D}}) & = \Hom_{\mathcal{C}}(\tau T_i, T_j), \text{ and}\\
\Hom_{\mathcal{D}}(T_i^{\mathcal{D}}, \tau^- T_j^{\mathcal{D}}) & = \Hom_{\mathcal{C}}(T_i, \tau^- T_j),
\end{align*}
and $T_i^{\mathcal{D}} \orr T_j^{\mathcal{D}}$ for any $i, j$.
\end{corollary}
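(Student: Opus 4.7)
The plan is to first invoke Proposition~\ref{proposition.inslice} on $\cl[\Sigma]{T_0}$ to find a local slice $\Sigma'$ in $\cC$, with $\Sigma' \cap \add \tau T = 0$, which contains all of $T_0, \ldots, T_r$. Since any local slice avoiding $\add \tau T$ descends from $\cD$ under the projection $\cD \to \cC = \cD/F$, I would then lift $\Sigma'$ to a complete slice $\Sigma'^{\cD}$ in $\cD$; every indecomposable of $\Sigma'$ has a unique preimage in $\Sigma'^{\cD}$, and I define $T_i^{\cD}$ to be precisely that preimage of $T_i$. By construction all $T_i^{\cD}$ then lie in one complete slice, giving the existence part of the corollary.

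For the three Hom equalities I would use the defining formula $\Hom_\cC(X, Y) = \bigoplus_{k \in \Z} \Hom_\cD(X, F^k Y)$, and show that only the $k=0$ summand survives for the objects at hand. Since $T_i^\cD$ and $T_j^\cD$ lie in one common complete slice, they are part of a single tilting module over some hereditary algebra, so $\Ext^1$ between them vanishes; combined with hereditariness ($\Ext^{\geq 2}=0$) and the expansion $F = \tau^{-1}[1]$, the terms with $k \neq 0$ reduce to Ext-groups that vanish by the tilting property or by hereditary dimension. The $\tau$-twisted variants follow identically, using that $\tau$ is compatible with the projection to $\cC$ and that $\tau \Sigma'^{\cD}$ (respectively $\tau^- \Sigma'^{\cD}$) is still inside the same fundamental domain for $F$.

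The remaining assertion $T_i^{\cD} \orr T_j^{\cD}$ then drops out: the equivalence relation $\eqr_\Sigma$ on the summands of $T$ is defined precisely by the non-vanishing of the three Hom spaces appearing in the corollary, computed through the tilting module $D\Hom_\cC(T,\Sigma)$; and $\eqr$ on the derived side is defined by the same three Hom spaces computed directly in $\cD$. The Hom equalities I have just produced convert any chain of $\ors_\Sigma$-steps in $\cC$ between $T_i$ and $T_j$ into a chain of $\ors$-steps in $\cD$ between $T_i^{\cD}$ and $T_j^{\cD}$.

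The main obstacle is the lifting step: I need to know that a local slice $\Sigma'$ in $\cC$ avoiding $\add\tau T$ genuinely lifts to a single complete slice in $\cD$, which requires choosing a fundamental domain for $F$ compatibly with $\Sigma'$ and with the positions of $\tau T_i^{\cD}$ and $\tau^- T_j^{\cD}$. Once this compatibility is secured, the $F$-shifts for $k \neq 0$ lie in AR-components that carry no relevant Hom to or from $\Sigma'^{\cD}$, and everything else follows mechanically.
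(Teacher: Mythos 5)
Your proposal is correct and is in substance the argument the paper intends (the paper states this corollary without a written proof, deriving it from Lemma~\ref{lemma.insliced} and Proposition~\ref{proposition.inslice}). The one organizational difference is worth noting: you go \emph{down} to $\cC$ first (via Proposition~\ref{proposition.inslice}, obtaining a local slice $\Sigma'$ containing $T_0,\dots,T_r$) and then lift $\Sigma'$ to a complete slice in $\cD$ — and you correctly identify this lifting as the delicate point. The paper's route avoids that step entirely: it applies Lemma~\ref{lemma.insliced} directly to the tilting module $D\Hom_{\cC}(T,\Sigma)$ over the hereditary algebra $H=\End_{\cC}(\Sigma)^{\op}$, which produces the complete slice \emph{upstairs} in $\mod H\subseteq D^b(\mod H)\simeq\cD$ from the outset; the $T_i^{\cD}$ are then simply the corresponding indecomposables in that slice. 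Your Hom computations ($\Hom_{\cC}=\bigoplus_k\Hom_{\cD}(-,F^k-)$ with only $k=0$ surviving, the $k=1$ term dying because a map would have to cross the complete slice the wrong way and $k\le -1$ or $k\ge 2$ terms dying for degree reasons) and your deduction of $T_i^{\cD}\orr T_j^{\cD}$ (converting each $\ors_{\Sigma}$-step into an $\ors$-step via the fact that $\Hom_H(\widetilde{T}_i,\widetilde{T}_j)$ is a quotient of $\Hom_{\cC}(T_i,T_j)$ and $D\Hom_{\cC}(-,\Sigma)$ commutes with $\tau$) are both sound. So the two approaches buy the same theorem; the paper's is marginally cleaner precisely on the lifting issue you flag, while yours makes the relation to Proposition~\ref{proposition.inslice} more explicit.
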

In order to obtain all maximal tilted subalgebras, we must make sure that there is no slice cutting through our cells.
\begin{lemma} \label{lemma.nocellcuttingd}
Let $S$ be a subset of some complete slice in $\mathcal{D}$, such that $T' \orr T''$ for any $T', T'' \in S$. Let $\Sigma$ be a complete slice with $\Sigma \cap \tau S = 0$. Then either all of $\tau S$ lie to the left or all of $\tau S$ lie to the right of $\Sigma$.
\end{lemma}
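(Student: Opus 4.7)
The plan is to argue by contradiction: suppose there exist $T', T'' \in S$ with $\tau T'$ strictly to the left of $\Sigma$ and $\tau T''$ strictly to the right, and produce an element $\sigma \in \tau S \cap \Sigma$, contradicting the hypothesis $\Sigma \cap \tau S = 0$. By Lemma~\ref{lemma.insliced}, the $\eqr$-equivalence class $\cl{T'}$ (which contains $S$ and is identified with $S$ in the intended usage of this lemma) lies inside a single complete slice, which I fix as $\Sigma_0$. Following a chain $T' = U_0 \ors U_1 \ors \cdots \ors U_n = T''$ that witnesses $T' \orr T''$ and restricting to the first consecutive pair whose $\tau$-shifts lie on opposite sides of $\Sigma$, I may assume $T' \ors T''$ directly.

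The three clauses defining $\ors$ collapse to two: the third, $\Hom(T', \tau^- T'') \neq 0$, coincides with the second, $\Hom(\tau T', T'') \neq 0$, via the autoequivalence $\tau$. In each remaining case there is a nonzero morphism from $\tau T'$ (strictly left of $\Sigma$) to either $\tau T''$ or $T''$, both strictly right of $\Sigma$ (note $T''$ is even further right than $\tau T''$). Since a complete slice splits every morphism whose source and target lie strictly on opposite sides, this morphism factors with nonzero composition through some $\sigma \in \Sigma$.

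Applying $\tau^{-1}$ to the factorisation and reading each half through the clauses of Definition~\ref{definition.celldecomp.infinity}, one checks that $T' \ors \tau^{-1}\sigma$ (from the left half, via the first clause) and $\tau^{-1}\sigma \ors T''$ (from the right half, via the first clause in case~(i) or the second clause in case~(ii)). Combined with $T'' \orr T'$, which holds because $T' \eqr T''$, this places $\tau^{-1}\sigma$ in $\cl{T'}$; by Lemma~\ref{lemma.insliced} and the choice of $\Sigma_0$ it then also lies in $\Sigma_0$, and hence in $S$ under the standing identification. Therefore $\sigma \in \tau S \cap \Sigma$, the desired contradiction.

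The main obstacle will be the bookkeeping of $\tau$-shifts across the two surviving cases, in particular verifying that after applying $\tau^{-1}$ to the factorisation each of the resulting halves is genuinely captured by one of the three defining clauses of $\ors$. The underlying geometric picture is transparent, namely that in the transjective component two complete slices cannot interleave across a single equivalence class without meeting on its $\tau$-translate; the work lies entirely in matching this picture to the clauses of $\ors$.
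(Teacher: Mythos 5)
Your overall strategy --- contradiction followed by splitting a morphism across $\Sigma$ --- is genuinely different from the paper's, which argues directly and positionally: if some $U\in S$ lies strictly to the right of $\tau^-\Sigma$, then for any step $U\ors V$ each defining clause produces a nonzero morphism into $V$ from $U$ or from $\tau U$, both of which still lie weakly to the right of $\tau^-\Sigma$; since no nonzero morphism between transjective indecomposables crosses a complete slice from right to left, $V$ lies weakly to the right of $\tau^-\Sigma$, and strictly so once one knows $V\notin\tau^-\Sigma$. Iterating along a chain realising $\orr$ gives the conclusion with no factorization and no appeal to Lemma~\ref{lemma.insliced}.

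The difficulty with your version is the final step. The object $\tau^{-1}\sigma$ you extract from the factorization is an arbitrary indecomposable on $\tau^{-1}\Sigma$; it need not be an indecomposable summand of the ambient tilting object, and the relations $\ors$, $\orr$, $\eqr$ of Definition~\ref{definition.celldecomp.infinity} are only defined on such summands. Consequently ``$\tau^{-1}\sigma\in\cl{T'}$'' does not type-check, Lemma~\ref{lemma.insliced} (whose proof rests on the tilting property via Lemma~\ref{lemma.notintauleft}) cannot be applied to it, and there is no route to $\tau^{-1}\sigma\in S$. Without that, $\sigma\in\tau S\cap\Sigma$ is not established and no contradiction with the hypothesis results; verifying the two halves of the factorization against the clauses of $\ors$, which you flag as the main obstacle, is actually the unproblematic part. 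A secondary issue: your reduction to a single $\ors$-step presumes every intermediate term of the chain has its $\tau$-shift strictly on one side of $\Sigma$, but those terms need not lie in $S$ (they are arbitrary summands of $T$, possibly regular or with $\tau$-shift on $\Sigma$), so ``the first consecutive pair on opposite sides'' need not exist. Both defects disappear if you drop the factorization and run the positional argument sketched above.
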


\begin{proof}
We may assume that $S$ has some element which lies to the right of $\tau^- \Sigma$. By definition of $\orr$ and the fact that $S \cap \tau^- \Sigma = 0$, then all elements of $S$ lie to the right of $\tau^- \Sigma$.
\end{proof}

Next we show that going down from the derived category to the cluster category is compatible with our equivalences.

\begin{lemma} \label{lemma.equivindep}
Let $S$ be a subset of some complete slice in $\mathcal{D}$, such that $T' \orr T''$ for any $T', T'' \in S$. Let $\Sigma$ be a local slice in $\mathcal{C}$ with $\Sigma \cap \tau \mathrm{pr}(S) = 0$ (here $\mathrm{pr} \colon \mathcal{D} \to \mathcal{C}$ is the projection functor). Then $\mathrm{pr}(T') \orr_{\Sigma} \mathrm{pr}(T'')$ for any $T', T'' \in S$.
\end{lemma}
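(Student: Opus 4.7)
The plan is to reduce the lemma to a one-step relation $T'\ors T''$ and then transport this information through the module-category equivalence of Theorem~\ref{theorem.modct} applied to the cluster-tilting object $\Sigma$. Since $\orr$ and $\orr_{\Sigma}$ are defined as transitive hulls of $\ors$ and $\ors_{\Sigma}$ respectively, it suffices to treat the single-step case: assume that one of the three Hom-spaces $\Hom_{\cD}(T',T'')$, $\Hom_{\cD}(\tau T',T'')$, $\Hom_{\cD}(T',\tau^-T'')$ is non-zero, and produce the corresponding non-vanishing for the $H$-modules $G(\mathrm{pr}(T'))$ and $G(\mathrm{pr}(T''))$ in $\mod H$, where $H=\End_{\cC}(\Sigma)^{\op}$ is hereditary and $G:=D\Hom_{\cC}(-,\Sigma)$.

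Next I would invoke the dualized form of Theorem~\ref{theorem.modct} to obtain an equivalence $G \colon \cC/\add(\tau^-\Sigma)\xrightarrow{\sim}\mod H$ that intertwines the AR-translates. The hypothesis $\Sigma\cap\tau\,\mathrm{pr}(S)=0$ ensures that no element of $\mathrm{pr}(S)$ lies in the kernel of $G$, so the $H$-modules $G(\mathrm{pr}(T'))$ and $G(\mathrm{pr}(T''))$ are genuinely non-zero. By the $\tau$-compatibility of $G$, each of the three defining cases of $\ors$ in $\cD$ will, once descended to $\cC$ and passed through $G$, land in the matching case of $\ors$ in $\mod H$, which is exactly what $\ors_{\Sigma}$ demands.

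To bridge $\cD$ and $\cC$, I would exploit the identity $\Hom_{\cC}(X,Y)=\bigoplus_i\Hom_{\cD}(X,F^iY)$, so that non-zero morphisms in $\cD$ give non-zero morphisms in $\cC$ immediately. The remaining point is to show that these morphisms survive the quotient by $\add(\tau^-\Sigma)$. Since $S$ is contained in a single complete slice of $\cD$, Lemma~\ref{lemma.nocellcuttingd} applied to the pair $\{T',T''\}$ places both objects on the same side of any lift $\tilde\Sigma$ of $\Sigma$ inside $\cD$, and this geometric information, combined with the hypothesis on $\tau\,\mathrm{pr}(S)$, will be used to exclude that a morphism between them routes entirely through the ``far-side'' trench $\add(\tau^-\Sigma)$.

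The main obstacle is precisely this last non-factorization step. Even with both $T'$ and $T''$ on one side of $\tilde\Sigma$, a non-zero $\cD$-morphism could conceivably be killed in the quotient by factoring through an object of $\add(\tau^-\Sigma)$; the argument will either exclude this directly by a position count on the AR-quiver, or show that such a factorization forces one of the other two $\tau$-shifted Hom-spaces in $\cD$ to be non-zero, which via the $\tau$-compatibility of $G$ supplies the required non-vanishing in $\mod H$ regardless. Once this case analysis is settled, applying $G$ to any surviving morphism produces the claimed $\ors_{\Sigma}$-relation.
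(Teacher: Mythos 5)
Your overall architecture matches the paper's: reduce to a single step $T'\ors T''$, treat the case $\Hom_{\cD}(T',T'')\neq 0$ via Lemma~\ref{lemma.nocellcuttingd}, and then worry about whether morphisms survive the passage to $\cC/(\tau^-\Sigma)$. But the proof is not complete: the step you yourself flag as ``the main obstacle'' is exactly where the real content of the lemma lies, and you leave it unresolved, offering two alternatives (``a position count on the AR-quiver'' or ``such a factorization forces one of the other two $\tau$-shifted Hom-spaces in $\cD$ to be non-zero'') without carrying out either. The second alternative does not close the argument even if true: producing a non-zero $\Hom_{\cD}(T',\tau^-T'')$ only puts you back in the same situation of having to show that \emph{this} morphism survives the quotient, so you would be chasing your tail.

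What the paper actually does in the hard case is subtler and uses both failure conditions simultaneously. Suppose $\Hom_{\cD}(\tau T',T'')\neq 0$ but $\mathrm{pr}(T')\not\ors_{\Sigma}\mathrm{pr}(T'')$. Then every map $\tau T'\to T''$ factors through $\tau^-\Sigma'$ for a lift $\Sigma'$ of $\Sigma$; combined with Lemma~\ref{lemma.nocellcuttingd} (all of $S$ on one side of $\tau^-\Sigma'$) this forces $\tau T'\in\tau^-\Sigma'$. Separately, the vanishing of $\Hom_{\cC/(\tau^-\Sigma)}(\mathrm{pr}(T'),\mathrm{pr}(\tau^-T''))$ forces $\tau^-T''\in\tau^- F\Sigma'$, i.e.\ $T''\in\tau^-\Sigma'[1]$. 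These two positional constraints together give $\Hom_{\cD}(\tau T',T'')=0$, contradicting the hypothesis. So the contradiction comes from pinning down the locations of $\tau T'$ and $\tau^-T''$ relative to $\Sigma'$ and $F\Sigma'$ using \emph{both} vanishing conditions at once --- not from rescuing a single non-zero Hom-space. Without this (or an equivalent argument), your proposal does not establish the lemma; note also that the edge cases where the AR-translate in $\mod H$ fails to match the one in $\cC$ (at objects mapping to projectives/injectives) are precisely the cases your appeal to ``$\tau$-compatibility of $G$'' glosses over.
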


\begin{proof}
Assume $T' \ors T''$, but $\mathrm{pr}(T') \not\ors_{\Sigma} \mathrm{pr}(T'')$. If $\Hom_{\mathcal{D}}(T', T'') \neq 0$, then by Lemma~\ref{lemma.nocellcuttingd} we have $\Hom_{\mathcal{C} / (\tau^- \Sigma)}(\mathrm{pr}(T'), \mathrm{pr}(T'')) \neq 0$, contradicting our assumption. Hence we may assume $\Hom_{\mathcal{D}}(\tau T', T'') \neq 0$, and any map $\tau T' \to T''$ factors through $\tau^- \Sigma'$ for some complete slice $\Sigma'$ with $\mathrm{pr}(\Sigma') = \Sigma$. By Lemma~\ref{lemma.nocellcuttingd} all of $S$ lies to the right of $\tau^- \Sigma'$, and $\tau T' \in \tau^- \Sigma'$. Similarly $\Hom_{\mathcal{C} / (\tau^- \Sigma)}(\mathrm{pr}(T'),\mathrm{pr}( \tau^- T'')) = 0$ implies $\tau^- T'' \in \tau^- F \Sigma'$. But then $T'' \in \tau^{-}\Sigma'[1]$, and hence $\Hom_{\mathcal{D}}(\tau T', T'') = 0$, contradicting our assumption.
\end{proof}
We now have all ingredients needed to prove that the definition of the equivalence relation $\eqr_{\Sigma}$ is independent of the chosen local slice $\Sigma$.
\begin{corollary} \label{corollary.independant}
Let $T$ be cluster-tilting in $\mathcal{C}$. Then $\eqr_{\Sigma}$ is independent of the choice of local slice $\Sigma$ with $\Sigma \cap \add \tau T = 0$.
\end{corollary}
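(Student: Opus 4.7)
The plan is to prove, for two local slices $\Sigma,\Sigma'$ in $\cC$ with $\Sigma \cap \add \tau T = \Sigma' \cap \add \tau T = 0$, that $\eqr_{\Sigma}$ and $\eqr_{\Sigma'}$ agree on the indecomposable summands of $T$. By symmetry of the roles of the two slices, it suffices to show one direction: $T_a \eqr_{\Sigma} T_b \Rightarrow T_a \eqr_{\Sigma'} T_b$. I would split according to the two clauses of Definition~\ref{definition.celldecomp.infinity} that can witness $T_a \eqr_{\Sigma} T_b$: either the elements are both regular with respect to $\Sigma$, or $T_a \orr_{\Sigma} T_b \orr_{\Sigma} T_a$.

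First assume $T_a \orr_{\Sigma} T_b \orr_{\Sigma} T_a$, with at least one of $T_a,T_b$ non-regular with respect to $\Sigma$. Proposition~\ref{proposition.inslice} then puts the whole class $\cl[\Sigma]{T_a}$ inside some local slice of $\cC$, and Corollary~\ref{corollary.liftequiv} supplies preimages $S = \{T_c^{\cD}\}$ in $\cD$ lying in a single complete slice and satisfying $T_c^{\cD} \orr T_d^{\cD}$ for every pair. Now I would apply Lemma~\ref{lemma.equivindep} to $S$ with the local slice $\Sigma'$ playing the role of $\Sigma$: its hypothesis $\Sigma' \cap \tau\,\mathrm{pr}(S) = 0$ holds automatically, since $\mathrm{pr}(S) \subset \add T$ forces $\tau\,\mathrm{pr}(S) \subset \add \tau T$, which is disjoint from $\Sigma'$ by assumption. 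The lemma produces $\mathrm{pr}(T_c^{\cD}) \orr_{\Sigma'} \mathrm{pr}(T_d^{\cD})$ for all pairs, and in particular $T_a \orr_{\Sigma'} T_b \orr_{\Sigma'} T_a$, so $T_a \eqr_{\Sigma'} T_b$.

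The remaining case is that $T_a$ and $T_b$ are both regular with respect to $\Sigma$. I would handle it by observing that regularity with respect to $\Sigma$ is in fact an intrinsic property of the summand $T_a$: the equivalence $\Hom_{\cC}(-,\Sigma) \colon \cC/\add(\tau^-\Sigma) \simeq \mod \End_{\cC}(\Sigma)^{\op}$ from Theorem~\ref{theorem.modct} commutes with $\tau$, and combined with the derived equivalence induced on $\cD$ by the complete slice $\Sigma^{\cD}$ — which sends the regular part of $\cD$ to the regular part of $\mod \End_{\cC}(\Sigma)^{\op}$ — this identifies ``$T_a$ is regular with respect to $\Sigma$'' with ``the lift $T_a^{\cD} \in \cD$ is regular'', a condition not depending on the choice of slice. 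It follows that $T_a$ and $T_b$ are also both regular with respect to $\Sigma'$, so $T_a \eqr_{\Sigma'} T_b$ by the ``both regular'' clause. The same intrinsic observation justifies the earlier assumption that whenever $T_a \orr_{\Sigma} T_b \orr_{\Sigma} T_a$ contains a non-regular element, all elements of $\cl[\Sigma]{T_a}$ are non-regular, so Corollary~\ref{corollary.liftequiv} is applicable.

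The hard part will be the intrinsic characterization of regularity in case (i), because it requires chasing the identification $\cC/\add(\tau^-\Sigma) \simeq \mod \End_{\cC}(\Sigma)^{\op}$ together with the derived equivalence cut out by $\Sigma^{\cD}$ between $H$ and $\End_{\cC}(\Sigma)^{\op}$, and arguing that regular AR-components are preserved under this equivalence. Once that intrinsic description is established, the transfer of the equivalence relation from $\Sigma$ to $\Sigma'$ is a mechanical application of Corollary~\ref{corollary.liftequiv} and Lemma~\ref{lemma.equivindep}.
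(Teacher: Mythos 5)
Your argument for the main case is exactly the paper's: lift an $\eqr_{\Sigma}$-class to a single complete slice in $\cD$ via Corollary~\ref{corollary.liftequiv}, check that the hypothesis $\Sigma'\cap\tau\,\mathrm{pr}(S)=0$ of Lemma~\ref{lemma.equivindep} is automatic because $\mathrm{pr}(S)\subseteq\add T$, and descend to get $\orr_{\Sigma'}$ in both directions. The paper's proof consists of precisely these two sentences and says nothing more.

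The one place you go beyond the paper is the explicit treatment of the ``both regular'' clause of Definition~\ref{definition.celldecomp.infinity}. Since Corollary~\ref{corollary.liftequiv} and Proposition~\ref{proposition.inslice} are stated only for non-regular summands, the paper's one-line proof tacitly assumes that being regular with respect to $\Sigma$ is slice-independent; you are right that this needs saying, and your justification is correct. It is, however, less of a ``hard part'' than you suggest: the equivalence $\Hom_{\cC}(-,\Sigma)\colon\cC/\add(\tau^{-}\Sigma)\to\mod\End_{\cC}(\Sigma)^{\op}$ commutes with $\tau$, so it carries AR-components to AR-components, sending the transjective component of $\cC$ (where every local slice lives) to the non-regular part and each regular component to a regular component; hence $D\Hom_{\cC}(T_a,\Sigma)$ is regular if and only if $T_a$ lies in a regular component of $\cC$, which does not mention $\Sigma$. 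With that observation in hand your case split closes cleanly, and the rest of the argument coincides with the paper's.
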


We will therefore from now on only write $\eqr$.

\begin{proof}
We have seen in Corollary~\ref{corollary.liftequiv} that summands equivalent with respect to one slice can be lifted to equivalent objects in the derived category. Then by Lemma~\ref{lemma.equivindep} they are also equivalent with respect to any other local slice.
\end{proof}

Now one makes sure that maps inside the cell are not affected by the choice of local slice.
\begin{proposition}
Let $T$ be cluster-tilting in $\mathcal{C}$. For any local slice $\Sigma$ with $\Sigma \cap \add \tau T = 0$ and any $T' \eqr T''$ we have
\[ \Hom_H( \widetilde{T}', \widetilde{T}'') = \left\{ \begin{array}{ll} \Hom_{\mathcal{C}}(T', T'') & \text{if } T' \text{ non-regular} \\ \frac{\Hom_{\mathcal{C}}(T', T'')}{\left(\substack{\text{maps factoring through}\\ \text{non-regular objects}}\right)} &  \text{if } T' \text{ regular,} \end{array} \right. \]
where $H = \End_{\mathcal{C}}(\Sigma)^{\op}$, $\wT' = D\Hom_{\cC}(T',\Sigma)$, and $\wT'' = D\Hom_{\cC}(T'',\Sigma)$.

In particular it is independent of $\Sigma$.
\end{proposition}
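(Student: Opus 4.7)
The plan is to convert the Hom-space over $H$ into a Hom-space in $\cC$ using the Calabi-Yau structure and Theorem~\ref{theorem.modct}, and then analyse which maps get killed by the resulting quotient. Since $\cC$ is $2$-Calabi-Yau with Serre functor $[2] \cong \tau^2$, Auslander-Reiten duality gives
\[ \widetilde{T}' = D\Hom_\cC(T', \Sigma) \cong \Hom_\cC(\Sigma, \tau^2 T') \cong \Hom_\cC(\tau^{-2}\Sigma, T'). \]
The auto-equivalence $\tau^{-2}$ sends $\Sigma$ to another cluster-tilting object whose endomorphism algebra is still $H$, so Theorem~\ref{theorem.modct} applied to $\tau^{-2}\Sigma$ yields
\[ \Hom_H(\widetilde{T}', \widetilde{T}'') \;\cong\; \Hom_{\cC / \add(\tau^{-1}\Sigma)}(T', T''). \]
The problem thus reduces to identifying the maps $T' \to T''$ that factor through $\add(\tau^{-1}\Sigma)$.

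For the non-regular case, my plan is to use Corollary~\ref{corollary.liftequiv} to lift the entire cell $\cl[\Sigma]{T'}$ to preimages $\{T_i^\cD\}$ inside a single complete slice of $\cD$. The hypothesis $\Sigma \cap \add(\tau T) = 0$ translates, after choosing compatible lifts, into $\Sigma^\cD \cap \tau \{T_i^\cD\} = \emptyset$, so Lemma~\ref{lemma.nocellcuttingd} forces the cell to lie strictly on one side of $\tau^{-1}\Sigma^\cD$. A would-be factoring $T' \to X \to T''$ through $\add(\tau^{-1}\Sigma)$ lifts, via $\cC = \cD/F$, to a factoring in $\cD$ through some $F^i$-shift of $\tau^{-1}\Sigma^\cD$; the shifts with $i \neq 0$ contribute zero by combining the hereditary vanishing of higher $\Ext$s with the cluster-tilting condition $\Ext^1_\cC(T, T) = 0$, and the $i = 0$ case is impossible because one of the two arrows in such a factoring would have to be a backward morphism across a complete slice. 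Hence the quotient coincides with $\Hom_\cC(T', T'')$, as claimed.

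For the regular case, the inclusion \emph{factors through $\add(\tau^{-1}\Sigma)$ implies factors through a non-regular object} is immediate, since $\tau^{-1}\Sigma$ lies inside the transjective component of $\cC$. The hard part will be the reverse inclusion, and here my plan is to exploit the following vanishing in $\mod H$: if $R$ is a regular $H$-module and $N$ a non-regular one, then $\Hom_H(R, N) = 0$ when $N$ is preprojective and $\Hom_H(N, R) = 0$ when $N$ is preinjective. Applying the equivalence $\Hom_\cC(\tau^{-2}\Sigma, -)$ to a factoring $T' \to N \to T''$ through a non-regular object $N$ then yields two cases: either $N \in \add(\tau^{-1}\Sigma)$, and the claim is immediate; or $\widetilde{N}$ is a non-regular $H$-module, in which case the induced composition $\widetilde{T}' \to \widetilde{N} \to \widetilde{T}''$ in $\mod H$ vanishes by the above dichotomy, so that the original map in $\cC$ factors through $\add(\tau^{-1}\Sigma)$. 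Independence of $\Sigma$ follows at once, since the right-hand side of the claimed formula depends only on the intrinsic regular/non-regular classification inside $\cC$.
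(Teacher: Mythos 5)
Your proof is correct and follows essentially the same route as the paper's: both reduce $\Hom_H(\widetilde{T}',\widetilde{T}'')$ to $\Hom_{\cC}(T',T'')$ modulo maps factoring through $\tau^-\Sigma$ (you via Serre duality, the paper via the dual of Theorem~\ref{theorem.modct}), then settle the non-regular case with Corollary~\ref{corollary.liftequiv} and Lemma~\ref{lemma.nocellcuttingd} and the regular case with the vanishing of morphisms from regular to preprojective and from preinjective to regular modules. The paper's proof is simply a terser version of the same argument.
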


\begin{proof}
Note that 
\[ \Hom_H( \widetilde{T}', \widetilde{T}'') = \frac{\Hom_{\mathcal{C}}(T', T'')}{\left(\substack{\text{maps factoring}\\ \text{through } \tau^- \Sigma} \right)}. \]
The claim for $T'$ and $T''$ regular follows immediately, since any map between them factors through the non-regular component if and only if it factors through any local slice.

For  $T'$ and $T''$ non-regular the claim follows from Corollary~\ref{corollary.liftequiv} and Lemma~\ref{lemma.nocellcuttingd}.
\end{proof}

Let $Q$ be a tree-quiver (that is a quiver without cycles, but possibly with multiple edges) and $\cC$ the cluster category of the path algebra $\mathbb{K}Q$. For any cluster-tilting object $T$ in $\cC$, we say that $\End_{\cC}(T)^\op$ is a \emph{cluster-tilted algebra of tree-type} (\cite[\S 4]{ABS2}). The next proposition assures that Definition~\ref{definition.celldecomp} and  Definition~\ref{definition.celldecomp.infinity} are equivalent for cluster-tilted algebras of tree-type.

\begin{proposition}
Assume $\mathcal{C}$ of tree-type, $T$ cluster-tilting, and $T', T''$ indecomposable non-regular. Then $T' \eqr T''$ if and only if $T' \equiv T''$, with $\equiv$ as defined in Section~\ref{section.Loc_slices}.
\end{proposition}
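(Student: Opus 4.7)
I plan to prove the two implications separately, reducing each to the corresponding generating relation.

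For $\equiv \subseteq \eqr$ it suffices to show $T_a \equiv_1 T_b \Rightarrow T_a \eqr T_b$. By hypothesis $T_a$ and $T_b$ are both direct summands of $\tau X \oplus \vartheta X \oplus X$ for some AR-triangle in $\mathcal{C}$. Since $\Ext^1_{\mathcal{C}}(\tau X, X) \simeq \Hom_{\mathcal{C}}(\tau X, \tau X) \neq 0$, the cluster-tilting property of $T$ forbids $\{\tau X, X\} \subseteq \add T$, so at least one of $T_a, T_b$ lies in $\add \vartheta X$. I would choose a local slice $\Sigma$ disjoint from $\add \tau T$, positioned so that the entire mesh descends to an AR-sequence $\tau \widetilde{X} \to \vartheta \widetilde{X} \to \widetilde{X}$ in $\mod H$ under the equivalence $\mathcal{C}/(\tau^- \Sigma) \simeq \mod H$, where $H = \End_{\mathcal{C}}(\Sigma)^{\op}$. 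If one of $T_a, T_b$ is an endpoint of the mesh, an irreducible morphism in this AR-sequence directly gives a nonzero element of $\Hom_H$ and hence $T_a \ors T_b$. If both lie in $\vartheta X$, the length-two composition $\tau \widetilde{T_a} \to \tau \widetilde{X} \to \widetilde{T_b}$ of irreducible morphisms in the AR-quiver of $\mod H$ is nonzero, as it does not lie in the mesh ideal at $\widetilde{T_a}$ (since $T_b \neq T_a$), so $\Hom_H(\tau \widetilde{T_a}, \widetilde{T_b}) \neq 0$ and again $T_a \ors T_b$.

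For $\eqr \subseteq \equiv$ I likewise reduce to $T_a \ors T_b \Rightarrow T_a \equiv T_b$, and by symmetry treat the case $\Hom_H(\widetilde{T_a}, \widetilde{T_b}) \neq 0$. In the tree-type setting such a nonzero Hom is realized by a directed path $\widetilde{T_a} = Z_0 \to Z_1 \to \cdots \to Z_k = \widetilde{T_b}$ of irreducible morphisms in the AR-quiver of $\mod H$. I proceed by induction on $k$. For $k = 1$ the summands $\widetilde{T_a}$ and $\widetilde{T_b}$ appear in a single AR-sequence, so their lifts to $\mathcal{C}$ satisfy $T_a \equiv_1 T_b$. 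For $k \geq 2$, if some interior $Z_i$ coincides with a summand $\widetilde{T_c}$ of $\widetilde{T}$, the path splits at $Z_i$ and the inductive hypothesis yields $T_a \equiv T_c \equiv T_b$.

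The remaining case, where no interior vertex of the path is a summand of $\widetilde{T}$, is the main obstacle. I would handle it by reinterpreting cells geometrically: up to transitive closure, the cell containing $T_a$ is the connected component of $T_a$ among the $T$-summands in the AR-quiver of $\mathcal{C}$, once the trenches $\tau[T_c]$ are removed. Any morphism in $\mathcal{C}$ whose underlying AR-quiver path crosses such a trench must factor through $\add \tau^- \Sigma$ for a suitable choice of local slice $\Sigma$ and therefore vanishes in $\mod H$. Consequently a nonzero $\Hom_H(\widetilde{T_a}, \widetilde{T_b})$ forces $T_a$ and $T_b$ to lie on the same side of every trench, which is precisely the condition that they belong to the same cell.
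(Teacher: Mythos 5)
Your first implication ($\equiv\ \Rightarrow\ \eqr$) is essentially fine and matches what the paper dismisses as easy. The problem is the converse, and specifically the ``remaining case'' where the path of irreducible maps realizing $\Hom_H(\widetilde{T}_a,\widetilde{T}_b)\neq 0$ has no interior vertex in $\add T$ --- this is the entire content of the hard direction, and your treatment of it is circular. The trenches are by definition $\tau$ of the cells, i.e.\ $\tau$ of the $\equiv$-classes; so ``$T_a$ and $T_b$ lie on the same side of every trench'' is a condition formulated in terms of the equivalence you are trying to establish, and the closing claim that this condition ``is precisely the condition that they belong to the same cell'' is neither the definition of $\equiv$ (which is generated mesh-by-mesh) nor proved. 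Moreover the intermediate assertion that a morphism crossing a trench ``factors through $\add\tau^-\Sigma$ for a suitable choice of local slice $\Sigma$ and therefore vanishes in $\mod H$'' quietly changes $\Sigma$, and hence $H$, in the middle of an argument about a fixed $\Hom_H$-space.

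More fundamentally, your converse never uses the rigidity of $T$, and it must: the statement is false for an arbitrary pair of non-regular indecomposables. For example, in $D_4$ with all arrows pointing to the centre $c$, the summands $P_1$ and $\tau^-P_2$ satisfy $\Ext^1(\tau^-P_2,P_1)=0$ and $\Hom(P_1,\tau^-P_2)\neq 0$ via the length-two path through $\tau^-P_c$, yet they lie in no common mesh; what saves the proposition is that $P_1\oplus\tau^-P_2$ admits \emph{no} completion to a tilting module avoiding $\tau^-P_c$, which is exactly an $\Ext^1$-vanishing (rigidity) constraint. This is where the paper's proof does its work: by Proposition~\ref{proposition.inslice} (resting on Lemma~\ref{lemma.notintauleft}, where $\Ext^1_H(T,T)=0$ enters) the whole class $\cl{T'}$ sits inside a single local slice, which is of tree type; taking $T'\eqr T''$ with no element of the class between them along that slice, the relation $\ors$ then forces $T'\oplus T''\in\add(\tau X\oplus\vartheta X\oplus X)$ for a single AR-triangle. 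Your induction skeleton (splitting the path at interior $T$-summands) is compatible with this, but to close the base case you need to replace the trench heuristic by the slice-containment argument, or some other explicit use of $\Ext^1_{\cC}(T,T)=0$.
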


\begin{proof}
It is easy to see that $T' \equiv T''$ implies $T' \eqr T''$.

For the converse, note that since $\mathcal{C}$ is of tree-type, so is any local slice. In particular, by Proposition~\ref{proposition.inslice}, the set $\cl{T'}$ is contained in a local slice of tree-type. We may assume that $T' \eqr T''$, and there is no element of $\cl{T'}$ in this local slice between them. It is easy to see that this can only happen if $T' \oplus T'' \in \add(\tau X \oplus \vartheta X \oplus X)$ for some AR-triangle $\tau X \to \vartheta X \to X \to$ in $\mathcal{C}$. Hence $T' \equiv T''$.
\end{proof}

Now we are ready to jump trenches.

Assume $T \in \mathcal{C}$ is cluster-tilting and $\Sigma$ a local slice with $\Sigma \cap \add \tau T = 0$. There are two different cases:
\begin{enumerate}
\item There are no summands of $T$ in the connecting component to the right of $\tau^- \Sigma$. In case there are also no regular direct summands of $T$, the local slice $\Sigma$ is homotopic to any local slice $\Sigma'$ such that there are no direct summands of $T$ left of (or in) $\tau^- \Sigma'$ (see Definition~\ref{definition.homotopy_inf}). We proceed using this local slice.

In case there is at least one regular direct summand $T'$ of $T$ the trench $\tau \cl{T'} = \{\tau T' \mid T' \text{ regular summand of } T\}$ is the one to jump. That is, we also replace $\Sigma$ by $\Sigma'$ as above, but they are not homotopic, and, on the level of tilted algebras, we apply the generalized $2$-APR tilt associated with $\cl{T'}$.
\item There is some direct summand of $T$ finitely many steps to the right of $\Sigma$ (equivalently, $D\Hom_{\mathcal{C}}(T, \Sigma)$ has a preprojective direct summand). We may assume $\Sigma$ to be as far to the right as possible inside its homotopy class. Then any source of $\Sigma$ is of the form $\tau^2 T'$ for some $T' \in \add T$. For any $X \in \tau^{-3} \Sigma$ there is a non-zero map $T \to \tau X$, and hence $X \not\in \add T$. In particular, any equivalence class $\cl{T'}$ with $T' \in \tau^{-2} \Sigma$ must be contained in $\tau^{-2} \Sigma$. Among these classes, choose $\cl{T'}$ minimal with respect to $\orr_{\Sigma}$. We let $\widetilde{\Sigma}$ be a local slice containing $\cl{T'}$, and such that all sinks of $\widetilde{\Sigma}$ lie in $\cl{T'}$. Choose the slice $\Sigma'$ $\tau$-orbit wise by
\[ \Sigma'_o = \left\{ \begin{array}{ll} \widetilde{\Sigma}_o & \text{ if } \widetilde{\Sigma}_o \in \{\tau^- \Sigma_o, \tau^{-2} \Sigma_o \} \\ \Sigma_o & \text{ otherwise} \end{array} \right. \]
That is, we take $\Sigma' = \widetilde{\Sigma}$ if the slices $\Sigma$ and $\widetilde{\Sigma}$ don't intersect, and otherwise we choose the rightmost points of $\Sigma$ and $\widetilde{\Sigma}$ $\tau$-orbit wise.

We now check that $\Sigma'$ is a ``legal'' slice, that is that $\Sigma' \cap \add \tau T = 0$. Assume $\Sigma' \cap \add \tau T \neq 0$, say $\tau T'' \in \Sigma'$. Then clearly $\tau T'' \in \widetilde{\Sigma}$, and $T'' \in \tau^{-2} \Sigma$. By the first property we have $\Hom(\tau T'', \cl{T'}) \neq 0$, which, together with the second property, contradicts the minimality in our choice of $T'$. Hence $\Sigma' \cap \add \tau T = 0$.

Now clearly replacing $\Sigma$ by $\Sigma'$ jumps the trench $\tau \cl{T'}$, and it remains to see that no other trenches are jumped. Let $T'' \in \add T \setminus \cl{T'}$ be indecomposable. If $T'' \not\in \tau^{-2} \Sigma$ then the trench $\tau \cl{T''}$ cannot be affected by our jump. If $T'' \in \tau^{-2} \Sigma$, then, by choice of $T'$, we have $\Hom(T'', T') = 0$. Hence $T''$ is not in $\widetilde{\Sigma}$, and thus $\tau T''$ is not in $\tau \widetilde{\Sigma}$. But the space between $\Sigma$ and $\Sigma'$ is contained in $\tau \widetilde{\Sigma}$, hence the trench $\tau \cl{T''}$ cannot have been jumped. 
\end{enumerate}

We illustrate the procedure above with an example.

\begin{example} \label{example.inf1}
Let $B$ be the cluster-tilted algebra of type
$\begin{tikzpicture}[yscale=-1]
 \node (1) at (0,0) [vertex] {};
 \node (2) at (.71,0) [vertex] {};
 \node (3) at (1.42,0) [vertex] {};
 \draw [->,bend right=20] (1) to (2);
 \draw [->,bend left=20] (1) to (2);
 \draw [->,bend right=20] (2) to (3);
 \draw [->,bend left=20] (2) to (3);
\end{tikzpicture}$
with quiver as depicted below.
\[ \begin{tikzpicture}[yscale=-1]
 \node (1) at (0,0) [inner sep=1pt] {$1$};
 \node (3) at (1.5,0) [inner sep=1pt] {$3$};
 \node (2) at (.75,-1) [inner sep=1pt] {$2$};
 \draw [->,bend right=30] (1) to (3);
 \draw [->,bend right=10] (1) to (3);
 \draw [->,bend left=10] (1) to (3);
 \draw [->,bend left=30] (1) to (3);
 \draw [->,bend right=15] (2) to (1);
 \draw [->,bend left=15] (2) to (1);
 \draw [->,bend right=15] (3) to (2);
 \draw [->,bend left=15] (3) to (2);
\end{tikzpicture} \]
(This is obtained from the hereditary algebra by mutating at the center vertex.) Then the homotopy classes of local slices look as depicted in Figure~\ref{figure.inf1}. We see that there are three maximal tilted subalgebras:
\[ \begin{tikzpicture}[yscale=-1,baseline=10pt]
 \node (1) at (0,0) [inner sep=1pt] {$1$};
 \node (3) at (1.5,0) [inner sep=1pt] {$3$};
 \node (2) at (.75,-1) [inner sep=1pt] {$2$};
 \draw [->,bend right=30] (1) to (3);
 \draw [->,bend right=10] (1) to (3);
 \draw [->,bend left=10] (1) to (3);
 \draw [->,bend left=30] (1) to (3);
 \draw [->,bend right=15] (2) to (1);
 \draw [->,bend left=15] (2) to (1);
 \draw [dashed,bend right=15] (3) to (2);
 \draw [dashed,bend left=15] (3) to (2);
\end{tikzpicture} \quad, \qquad
\begin{tikzpicture}[yscale=-1,baseline=10pt]
 \node (1) at (0,0) [inner sep=1pt] {$1$};
 \node (3) at (1.5,0) [inner sep=1pt] {$3$};
 \node (2) at (.75,-1) [inner sep=1pt] {$2$};
 \draw [dashed,bend right=30] (1) to (3);
 \draw [dashed,bend right=10] (1) to (3);
 \draw [dashed,bend left=10] (1) to (3);
 \draw [dashed,bend left=30] (1) to (3);
 \draw [->,bend right=15] (2) to (1);
 \draw [->,bend left=15] (2) to (1);
 \draw [->,bend right=15] (3) to (2);
 \draw [->,bend left=15] (3) to (2);
\end{tikzpicture} \quad \text{, and} \qquad
\begin{tikzpicture}[yscale=-1,baseline=10pt]
 \node (1) at (0,0) [inner sep=1pt] {$1$};
 \node (3) at (1.5,0) [inner sep=1pt] {$3$};
 \node (2) at (.75,-1) [inner sep=1pt] {$2$};
 \draw [->,bend right=30] (1) to (3);
 \draw [->,bend right=10] (1) to (3);
 \draw [->,bend left=10] (1) to (3);
 \draw [->,bend left=30] (1) to (3);
 \draw [dashed,bend right=15] (2) to (1);
 \draw [dashed,bend left=15] (2) to (1);
 \draw [->,bend right=15] (3) to (2);
 \draw [->,bend left=15] (3) to (2);
\end{tikzpicture} \]
\end{example}

\begin{figure}[htb]
\[ \begin{tikzpicture}[scale=.8,yscale=-1,baseline]
 % gray areas
 \fill [fill1] (4,-.5) -- (2,2.5) -- (-.5,2.5) -- (-.5,-.5) -- cycle;
 \fill [fill2] (4,-.5) -- (4.5,0) [bend right=20] to (3.25,1.25) [bend right=20] to (2,2.5) -- (1.5,2) [bend right=20] to (2.75,.75) [bend right=20] to (4,-.5);
 % vertices
 \foreach \x in {0,...,5}
  {
   \node (1-\x) at (2*\x,2) [vertex] {};
   \node (3-\x) at (2*\x,0) [vertex] {};
  }
 \foreach \x in {0,...,4}
  \node (2-\x) at (2*\x+1,1) [vertex] {};
 \replacevertex{(1-2)}{[tvertex] {$\tau T_3$}}
 \replacevertex{(1-3)}{[tvertex] {$T_3$}}
 \replacevertex{(3-3)}{[tvertex] {$\tau T_1$}}
 \replacevertex{(3-4)}{[tvertex] {$T_1$}}
 % arrows
 \foreach \x/\y in {0/1,1/2,2/3,3/4,4/5}
  {
   \draw [->,bend right=20] (1-\x) to (2-\x);
   \draw [->,bend left=20] (1-\x) to (2-\x);
   \draw [->,bend right=20] (3-\x) to (2-\x);
   \draw [->,bend left=20] (3-\x) to (2-\x);
   \draw [->,bend right=20] (2-\x) to (1-\y);
   \draw [->,bend left=20] (2-\x) to (1-\y);
   \draw [->,bend right=20] (2-\x) to (3-\y);
   \draw [->,bend left=20] (2-\x) to (3-\y);
  }
 \node at (-.5,0) {$\cdots$};
 \node at (-.5,1) {$\cdots$};
 \node at (-.5,2) {$\cdots$};
 \node at (10.5,0) {$\cdots$};
 \node at (10.5,1) {$\cdots$};
 \node at (10.5,2) {$\cdots$};
\end{tikzpicture}
\quad
\begin{tikzpicture}[yscale=-1,baseline=40pt]
 \node (1) at (0,0) [inner sep=1pt] {$1$};
 \node (3) at (1.5,0) [inner sep=1pt] {$3$};
 \node (2) at (.75,-1) [inner sep=1pt] {$2$};
 \draw [->,bend right=30] (1) to (3);
 \draw [->,bend right=10] (1) to (3);
 \draw [->,bend left=10] (1) to (3);
 \draw [->,bend left=30] (1) to (3);
 \draw [->,bend right=15] (2) to (1);
 \draw [->,bend left=15] (2) to (1);
 \draw [dashed,bend right=15] (3) to (2);
 \draw [dashed,bend left=15] (3) to (2);
\end{tikzpicture} \]
\[ \begin{tikzpicture}[scale=.8,yscale=-1,baseline]
 % gray areas
 \fill [fill2] (4,-.5) -- (3.5,0) [bend left=20] to (4.75,1.25) [bend left=20] to (6,2.5) -- (6.5,2) [bend left=20] to (5.25,.75) [bend left=20] to (4,-.5);
 % vertices
 \foreach \x in {0,...,5}
  {
   \node (1-\x) at (2*\x,2) [vertex] {};
   \node (3-\x) at (2*\x,0) [vertex] {};
  }
 \foreach \x in {0,...,4}
  \node (2-\x) at (2*\x+1,1) [vertex] {};
 \replacevertex{(1-2)}{[tvertex] {$\tau T_3$}}
 \replacevertex[fill2]{(1-3)}{[tvertex] {$T_3$}}
 \replacevertex{(3-3)}{[tvertex] {$\tau T_1$}}
 \replacevertex{(3-4)}{[tvertex] {$T_1$}}
 % arrows
 \foreach \x/\y in {0/1,1/2,2/3,3/4,4/5}
  {
   \draw [->,bend right=20] (1-\x) to (2-\x);
   \draw [->,bend left=20] (1-\x) to (2-\x);
   \draw [->,bend right=20] (3-\x) to (2-\x);
   \draw [->,bend left=20] (3-\x) to (2-\x);
   \draw [->,bend right=20] (2-\x) to (1-\y);
   \draw [->,bend left=20] (2-\x) to (1-\y);
   \draw [->,bend right=20] (2-\x) to (3-\y);
   \draw [->,bend left=20] (2-\x) to (3-\y);
  }
 \node at (-.5,0) {$\cdots$};
 \node at (-.5,1) {$\cdots$};
 \node at (-.5,2) {$\cdots$};
 \node at (10.5,0) {$\cdots$};
 \node at (10.5,1) {$\cdots$};
 \node at (10.5,2) {$\cdots$};
\end{tikzpicture}
\quad
\begin{tikzpicture}[yscale=-1,baseline=40pt]
 \node (1) at (0,0) [inner sep=1pt] {$1$};
 \node (3) at (1.5,0) [inner sep=1pt] {$3$};
 \node (2) at (.75,-1) [inner sep=1pt] {$2$};
 \draw [dashed,bend right=30] (1) to (3);
 \draw [dashed,bend right=10] (1) to (3);
 \draw [dashed,bend left=10] (1) to (3);
 \draw [dashed,bend left=30] (1) to (3);
 \draw [->,bend right=15] (2) to (1);
 \draw [->,bend left=15] (2) to (1);
 \draw [->,bend right=15] (3) to (2);
 \draw [->,bend left=15] (3) to (2);
\end{tikzpicture} \]
\[ \begin{tikzpicture}[scale=.8,yscale=-1,baseline]
 % gray areas
 \fill [fill1] (8,-.5) -- (6,2.5) -- (10.5,2.5) -- (10.5,-.5) -- cycle;
 \fill [fill2] (8,-.5) -- (8.5,0) [bend right=20] to (7.25,1.25) [bend right=20] to (6,2.5) -- (5.5,2) [bend right=20] to (6.75,.75) [bend right=20] to (8,-.5);
 % vertices
 \foreach \x in {0,...,5}
  {
   \node (1-\x) at (2*\x,2) [vertex] {};
   \node (3-\x) at (2*\x,0) [vertex] {};
  }
 \foreach \x in {0,...,4}
  \node (2-\x) at (2*\x+1,1) [vertex] {};
 \replacevertex{(1-2)}{[tvertex] {$\tau T_3$}}
 \replacevertex[fill2]{(1-3)}{[tvertex] {$T_3$}}
 \replacevertex{(3-3)}{[tvertex] {$\tau T_1$}}
 \replacevertex[fill2]{(3-4)}{[tvertex] {$T_1$}}
 % arrows
 \foreach \x/\y in {0/1,1/2,2/3,3/4,4/5}
  {
   \draw [->,bend right=20] (1-\x) to (2-\x);
   \draw [->,bend left=20] (1-\x) to (2-\x);
   \draw [->,bend right=20] (3-\x) to (2-\x);
   \draw [->,bend left=20] (3-\x) to (2-\x);
   \draw [->,bend right=20] (2-\x) to (1-\y);
   \draw [->,bend left=20] (2-\x) to (1-\y);
   \draw [->,bend right=20] (2-\x) to (3-\y);
   \draw [->,bend left=20] (2-\x) to (3-\y);
  }
 \node at (-.5,0) {$\cdots$};
 \node at (-.5,1) {$\cdots$};
 \node at (-.5,2) {$\cdots$};
 \node at (10.5,0) {$\cdots$};
 \node at (10.5,1) {$\cdots$};
 \node at (10.5,2) {$\cdots$};
\end{tikzpicture}
\quad
\begin{tikzpicture}[yscale=-1,baseline=40pt]
 \node (1) at (0,0) [inner sep=1pt] {$1$};
 \node (3) at (1.5,0) [inner sep=1pt] {$3$};
 \node (2) at (.75,-1) [inner sep=1pt] {$2$};
 \draw [->,bend right=30] (1) to (3);
 \draw [->,bend right=10] (1) to (3);
 \draw [->,bend left=10] (1) to (3);
 \draw [->,bend left=30] (1) to (3);
 \draw [dashed,bend right=15] (2) to (1);
 \draw [dashed,bend left=15] (2) to (1);
 \draw [->,bend right=15] (3) to (2);
 \draw [->,bend left=15] (3) to (2);
\end{tikzpicture} \]
\caption{The two homotopy classes of local slices for the cluster-tilted algebra of Example~\ref{example.inf1}. The light grey areas mark the homotopy classes of local slices, the darker grey marks the rightmost (upper picture), only (middle picture), and leftmost (lower picture) local slices in their homotopy class. Note that $T_2$ is regular, and hence not to be seen in the picture of the connecting component.} \label{figure.inf1}
\end{figure}
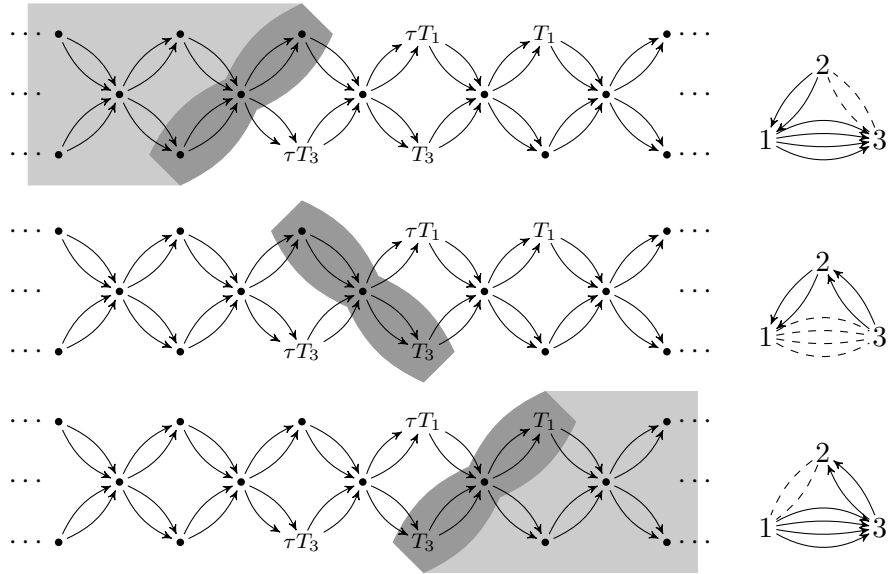

\begin{remark}
Note that in the representation infinite case there could be less maximal tilted subalgebras than one might expect. In case $B$ is the endomorphism ring of a regular cluster-tilting object it only has one maximal tilted subalgebra.
\end{remark}
 
\newpage

\end{document}